\theoremstyle{plain}
\newtheorem{thm}{Theorem}[section]
\newtheorem{cor}[thm]{Corollary}
\newtheorem{prop}[thm]{Proposition}
\newtheorem{lemma}[thm]{Lemma}
\theoremstyle{definition}
\newtheorem{definition}[thm]{Definition}
\newtheorem{remark}[thm]{Remark}
\newtheorem*{rep@theorem}{\rep@title}
\newcommand{\newreptheorem}[2]{%
\newenvironment{rep#1}[1]{%
 \def\rep@title{#2 \ref{##1}}%
 \begin{rep@theorem}}%
 {\end{rep@theorem}}}
\newtheorem{theorem}{Theorem}
\newcommand{\kate}[1]{{\color[rgb]{.8,.3,.1}{#1}}}
\newcommand{\Q}{\ensuremath{\mathbb{Q}}}
\newcommand{\R}{\ensuremath{\mathbb{R}}}
\newcommand{\Z}{\ensuremath{\mathbb{Z}}}
\newcommand{\C}{\ensuremath{\mathbb{C}}}
\newcommand{\PP}{\ensuremath{\mathbb{P}}}
\renewcommand{\H}{\ensuremath{\mathbb{H}}}
\newcommand{\SL}{\text{SL}}
\newcommand{\PSL}{\text{PSL}}
\newcommand{\GL}{\text{GL}}
\newcommand{\tr}{\text{tr}}
\newcounter{nootje}
\renewcommand\check[1]
\newcommand{\mat}[4]{\left(\begin{array}{cc}#1 & #2 \\ #3 & #4 \end{array}\right)}
\begin{document}
\title[Gonality]{Gonality and genus of canonical components
of character varieties}

\author[K.L. Petersen, A.W. Reid ]{ Kathleen L. Petersen \\
 Alan W. Reid}
\maketitle


\section{Introduction}

Throughout the paper, $M$ will always 
denote a complete, orientable finite volume hyperbolic 3-manifold
with cusps.  By abuse of notation we will denote by $\partial M$ 
to be the boundary of the compact manifold obtained from $M$ by truncating the
cusps.

Given such a manifold, the $\SL_2(\C)$ character variety of $M$, $X(M)$, is a
complex algebraic set associated to representations of $\pi_1(M)
\rightarrow \SL_2(\C)$ (see \S 4 for more details).  Work of Thurston showed 
that
any irreducible component of such a variety containing the character
of a discrete faithful representation has complex dimension equal to
the number of cusps of $M$.  Such components are called {\em canonical
  components} and are denoted $X_0(M)$.  
Character varieties have been fundamental
tools in studying the topology of $M$ (we refer the reader to
\cite{MR1886685} for more), and 
canonical components carry a
wealth of topological information about $M$, including containing 
subvarieties associated to Dehn fillings of $M$.  

When $M$ has exactly
one cusp, any canonical component is a complex curve.  The aim of this
paper is to study how some of the natural invariants of these complex
curves correspond to the underlying manifold $M$.  In particular, we
concentrate on how the {\em gonality} of these curves behaves in
families of Dehn fillings on 2-cusped hyperbolic manifolds.
More precisely, we study families of 1-cusped
3-manifolds which are obtained by Dehn filling of
a single cusp of a fixed 2-cusped hyperbolic 3-manifold, $M$. We
write $M(-,r)$ to denote the manifold obtained by $r=p/q$ filling of
the second cusp of $M$.  

To state our results we introduce the
following notation.
If $X$ is a complex curve, we write $\gamma(X)$ to denote the gonality
of $X$, $g(X)$ to be the (geometric) genus of $X$ 
and $d(X)$ to be the degree (of the specified embedding) of $X$. The gonality of a curve is the lowest degree of a map from that curve to $\C$.  Unlike genus, gonality is not a topological invariant of curves, but rather is intimately connected to the geometry of the curve.  There are connections between gonality and genus, most notably the Brill-Noether theorem which gives an upper bound for gonality in terms of genus (see \S~\ref{section:genusanddegree}) but in some sense these are orthogonal invariants. For example, all hyperelliptic curves all have gonality two, but can have arbitrarily high genus.  Moreover,  for $g>2$, there are curves of genus $g$ of different gonality.  We refer the reader to \S~\ref{section:DefinitionsandAlgebraicGeometry} for precise definitions.

Our first theorem is the following.

\begin{thm}\label{theorem:1}
Let $M$ be a finite volume hyperbolic 3-manifold with two cusps. If $M(-,r)$ is hyperbolic, then there is a positive constant $c$ depending only on $M$ such that $$\gamma(X_0(M(-,r)))\leq c.$$
\end{thm}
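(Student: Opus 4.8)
The plan is to place all of the curves $X_0(M(-,r))$ inside the single fixed surface $X_0(M)$ and, for each slope $r$, to produce a finite map from $X_0(M(-,r))$ onto a rational curve whose degree is bounded independently of $r$; a uniform gonality bound follows at once. First recall that, since $M$ has two cusps, $X_0(M)$ is an irreducible surface, and that the surjection $\pi_1(M)\to\pi_1(M(-,r))$ induces an inclusion $X(M(-,r))\hookrightarrow X(M)$ under which $X_0(M(-,r))\subseteq X_0(M)$. Let $T$ be the boundary torus of the cusp of $M$ that is being filled, with $\pi_1(T)=\langle\mu,\lambda\rangle$, and let $\Phi\colon X_0(M)\to X(\Z\oplus\Z)$ be the rational map recording the restriction of a character to $\pi_1(T)$.

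The reason for restricting to the \emph{filled} cusp is the following. Write $r=p/q$ with $\gcd(p,q)=1$. A character on $X_0(M(-,r))$ comes from a representation killing $\mu^p\lambda^q$, so its restriction to $\pi_1(T)$ factors through $\pi_1(T)/\langle\mu^p\lambda^q\rangle\cong\Z$, a cyclic group generated by the class of the core curve of the filling solid torus. Hence $\Phi$ maps $X_0(M(-,r))$ into the image $D_r\subseteq X(\Z\oplus\Z)$ of the map $X(\Z)\to X(\Z\oplus\Z)$ induced by this quotient, and $D_r\cong X(\Z)\cong\C$, a rational curve coordinatized by the trace of the core curve. Thus $\gamma(D_r)=1$ \emph{no matter how large} $|p|+|q|$ is. (Restricting instead to the unfilled cusp produces slices of unbounded complexity, which is why that approach does not give a uniform bound.)

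Now fix $r$ and consider $\Phi|_{X_0(M(-,r))}\colon X_0(M(-,r))\to D_r$. If this map is nonconstant then $\Phi$ is in particular dominant; as a dominant rational map between surfaces it is generically finite, of some degree $N=N(M)$ depending only on $M$, and $\Phi|_{X_0(M(-,r))}$ is a finite surjection onto $D_r$ whose generic fibre lies inside a generic fibre of $\Phi$ and so has at most $N$ points. Composing with a degree-one map $D_r\to\C$ then presents $X_0(M(-,r))$ as a degree-$\le N$ cover of $\C$, so $\gamma(X_0(M(-,r)))\le N$. (Equivalently one may run the count on the eigenvalue variety, where $D_r$ is the subtorus $\{\alpha^p\beta^q=1\}\cong\C^{\ast}$; the descent to the character variety costs a factor of at most $2$.)

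The step I expect to require the most care is the remaining case, where $\Phi$ contracts $X_0(M(-,r))$ to a point — which must happen, for every $r$, if $\Phi$ fails to be dominant (a rigidity-type question of whether deforming the unfilled cusp disturbs the peripheral structure of the filled one). In that case $X_0(M(-,r))$ is an irreducible one-dimensional component of a fibre of the \emph{fixed} morphism $\Phi\colon X_0(M)\to\overline{\Phi(X_0(M))}$; but a morphism from a fixed surface to a curve or surface has only finitely many one-dimensional fibre-components beyond one family of generically isomorphic ones (Stein factorization), so all such components have gonality at most a constant $\gamma_0=\gamma_0(M)$. Taking $c=\max(N,\gamma_0)$ then works in every case. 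What is left is routine: that gonality is computed on the smooth projective model, the inclusion $X_0(M(-,r))\subseteq X_0(M)$, and the fact that on the locus where the core curve is loxodromic the eigenvalue relation $\alpha^p\beta^q=1$ genuinely forces $\mu^p\lambda^q$ to be trivial (there $\mu,\lambda$ commute and are simultaneously diagonalizable).
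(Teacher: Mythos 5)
Your overall strategy is the same as the paper's: restrict characters to the peripheral subgroup of the \emph{filled} cusp, observe that the target curve (your $D_r$; the curve $\mathcal{C}(r)=\varpi_2(\mathcal{H}(r))$ of Lemma~\ref{lemma:keygonality} in the paper) is rational of gonality one for every $r$, and bound the degree of the restricted map by the degree of the fixed map out of the surface (the paper runs this through $A_0(M)\subset\C^4$ and Lemma~\ref{lemma:key1} rather than through $X_0(M)\to X(\Z\oplus\Z)$, but that is cosmetic). Your dichotomy between the dominant case and the contracted case is exactly the paper's dichotomy according to whether the filled cusp is geometrically isolated from the unfilled one (Lemma~\ref{lemma:geoiso}).

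There is, however, a genuine gap in your contracted case. You assert that, by Stein factorization, the one-dimensional fibre components of $\Phi\colon X_0(M)\to\overline{\Phi(X_0(M))}$ consist of finitely many exceptional ones together with ``one family of generically isomorphic ones.'' Stein factorization gives connected fibres, not isomorphic ones: a surface fibred over a curve has fibres whose moduli vary (already $y^2=x(x-1)(x-\lambda)$ over the $\lambda$-line has pairwise non-isomorphic smooth fibres), so ``generically isomorphic'' is false and cannot be the reason the gonalities of the fibres are uniformly bounded. What is true, and what the paper proves in Lemma~\ref{lemma:key2}, is that each such fibre component is cut out by specializing the finitely many defining equations of the fixed surface at a point of the base; hence its degree is bounded independently of $r$, hence its genus is bounded by the genus--degree formula, hence its gonality is bounded by Brill--Noether. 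Some argument of this kind (or an appeal to constancy of arithmetic genus in a flat family) is needed to close this case. Two smaller points: nonconstancy of $\Phi$ on a single curve $X_0(M(-,r))$ does not by itself make $\Phi$ dominant --- if $\overline{\Phi(X_0(M))}$ were a curve it could coincide with a component of $D_r$, albeit for only finitely many $r$, and you must also rule out $\Phi$ contracting all of $X_0(M)$ to a point, which the paper does in Lemma~\ref{lemma:containment}; and in the dominant case you need $D_r$ not to lie inside the locus over which the fibres of $\Phi$ jump --- again true for all but finitely many $r$ because that locus is a fixed finite union of curves and points, but this is precisely the content of Lemma~\ref{lemma:key1} rather than a triviality.
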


The figure-8 knot complement and its so-called sister manifold are
both integral surgery of one component of the Whitehead link
complement.  Their canonical components are well-known to be an
elliptic curve, and a rational curve, respectively. These have
gonality 2 and 1.  Therefore, Theorem~\ref{theorem:1} is best
possible.

Our techniques can also be used to obtain information about the genera and degree of related varieties as we now discuss.  
The inclusion map from $\pi_1(\partial M)$ to $\pi_1(M)$ induces a map from $X(M)$ to the character variety of $\partial M$.  We let $A(M)$ denote the image of this map and let $A_0(M)$ denote the image of a canonical component. 
 When $M$ has two cusps, the variety $A(M)$ naturally sits in $\C^4(m_1,l_1,m_2,l_2)$ where the $m_i$ and $l_i$ are a choice of framing (we often refer to these
as meridional and longitudinal parameters for the $i^{th}$ cusp). For $r=p/q\neq 0$ in lowest terms, we define the naive height of $r$ to be $h(r) = \max\{|p|,|q|\}$.  
 To avoid cumbersome notation, we  define $h(0)=h(\infty)=1$.  
\begin{thm}\label{theorem:genus}
Let $M$ be a finite volume hyperbolic 3-manifold with two cusps.  If $M(-,r)$ is hyperbolic, then
there is a  positive  constant $c$  depending only on $M$ and the framing of the second cusp such that 
\[   g(A_0(M(-,r))) \leq c \cdot h(r)^2.\]
\end{thm}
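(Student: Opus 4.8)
The plan is to reduce the assertion to a statement about plane curves: I will bound the degree of the curve $A_0(M(-,r))$ linearly in $h(r)$, and then invoke the genus--degree inequality for irreducible plane curves (the fact, recalled in \S\ref{section:genusanddegree}, that an irreducible plane curve of degree $d$ has geometric genus at most $\binom{d-1}{2}$). Squaring the linear degree bound then produces exactly the quadratic genus bound claimed.

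First I would set up Dehn filling at the level of boundary characters. Since $M$ has two cusps, $X_0(M)$ has dimension $2$ and restricts with generically finite fibres to the boundary, so $A_0(M)$ is an irreducible surface in $\C^4(m_1,l_1,m_2,l_2)$. A character of $\pi_1(M)$ descends to $\pi_1(M(-,r))$ exactly when the holonomy of the $r=p/q$ slope on the second cusp is trivial; on the $(m_2,l_2)$ coordinates this is cut out by the vanishing of a single polynomial $f_r$, which in eigenvalue coordinates is $m_2^p l_2^q = \pm 1$. In either trace or eigenvalue coordinates one has $\deg f_r = O(h(r))$, the implied constant depending only on the chosen framing of the second cusp (using the Chebyshev recursions for traces of powers in the trace--coordinate case). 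Note that $f_r$ is not identically zero on $A_0(M)$ except for at most one slope $r$: two independent such multiplicative relations would force the second--cusp holonomy to take only finitely many values along $A_0(M)$, which is impossible since it varies non-trivially, for instance along the Dehn filling characters. By Thurston's hyperbolic Dehn surgery theorem, for all but finitely many $r$ the holonomy of $M(-,r)$ restricts to a character lying on $X_0(M)$, so $X_0(M(-,r))$ is the component of $X_0(M)\cap\{f_r=0\}$ through it; restricting to the one remaining cusp then identifies $A_0(M(-,r))$ with an irreducible component of $\pi_{12}(C_r)$, where $C_r:=A_0(M)\cap\{f_r=0\}\subset\C^4$ and $\pi_{12}\colon\C^4\to\C^2(m_1,l_1)$ is the coordinate projection.

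Next I would bound the degree. Passing to projective closures in $\PP^4$, the surface $\overline{A_0(M)}$ has a fixed degree $D=D(M)$, while $\overline{\{f_r=0\}}$ is a hypersurface of degree $\deg f_r = O(h(r))$; for $r$ outside the finitely many exceptions this hypersurface does not contain the irreducible surface $\overline{A_0(M)}$, so B\'ezout's theorem bounds the degree of the one--dimensional part of the intersection, hence $\deg\overline{C_r}$, by $D\cdot\deg f_r = O(h(r))$. Since a linear projection does not increase the degree of a curve, and a component has degree at most that of the ambient curve, $\deg A_0(M(-,r))\le\deg\overline{C_r}=O(h(r))$, with implied constant depending only on $M$ and the framing of the second cusp. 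As $A_0(M(-,r))$ is an irreducible plane curve of this degree, the genus--degree inequality yields $g(A_0(M(-,r)))=O(h(r)^2)$. The finitely many exceptional slopes $r$ for which $M(-,r)$ is hyperbolic contribute only finitely many curves, of uniformly bounded genus; absorbing these and using $h(r)\ge 1$ produces a single constant $c$ as required.

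The step I expect to require the most care is the first one: identifying exactly which curve $A_0(M(-,r))$ is, inside the fixed ambient surface $A_0(M)$ -- that is, that Dehn filling really does slice it out by one equation whose degree grows linearly in $h(r)$ -- together with controlling the behaviour at infinity so that B\'ezout applies with no spurious higher--dimensional components, and disposing of the handful of exceptional fillings. Once the curve is pinned down and its degree controlled uniformly in $r$, the genus estimate is a formal consequence of the genus--degree formula; in particular no finer analysis -- a Riemann--Hurwitz computation, or a count of the singularities of $A_0(M(-,r))$, which would presumably sharpen the exponent -- is needed for the bound as stated.
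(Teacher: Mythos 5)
Your proposal is correct and follows essentially the same route as the paper: identify $A_0(M(-,r))$ as a component of the projection of $A_0(M)\cap\{f_r=0\}$ (the paper's Lemma~\ref{lemma:containment}), bound its degree linearly in $h(r)$ via B\'ezout and the fact that projection does not increase degree (the paper's Theorem~\ref{theorem:degree}), and finish with the genus--degree inequality. The only cosmetic difference is that you run B\'ezout on projective closures in $\PP^4$ while the paper cites the affine proper-intersection version directly.
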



Using work of Dunfield \cite{MR1695208} (see also \S 4) we deduce
\begin{cor} Let $M$ be a finite volume hyperbolic 3-manifold with two cusps. If $M(-,r)$ is hyperbolic, 
 and $|H^1(M(-,r); \Z/2\Z)|=2$ then 
 there is a  positive  constant $c$  depending only on $M$ and the framing of the second cusp such that 
\[ g(X_0(M(-,r))) \leq c \cdot h(r)^2. \]
\end{cor}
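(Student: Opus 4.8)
The plan is to deduce the genus bound on $X_0(M(-,r))$ from the genus bound on $A_0(M(-,r))$ established in Theorem~\ref{theorem:genus}, by showing that under the cohomological hypothesis the restriction map $X_0(M(-,r)) \to A_0(M(-,r))$ is birational, or at worst of degree bounded independently of $r$. First I would recall the relevant input from Dunfield's work \cite{MR1695208}: when $M(-,r)$ is a one-cusped hyperbolic manifold with $|H^1(M(-,r);\Z/2\Z)|=2$, the trace field is generated in a controlled way and, more to the point, the natural map from the canonical component $X_0$ to its image $A_0$ in the character variety of the boundary torus is a birational isomorphism (equivalently, the ``restriction map'' is one-to-one on a Zariski open set). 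The hypothesis $|H^1(M(-,r);\Z/2\Z)|=2$ is exactly what is needed to rule out the extra symmetry — coming from $H^1(M(-,r);\Z/2\Z)$ acting by sign changes on representations — that would otherwise make this map positive-degree onto its image.

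Granting this, the key step is the behavior of genus under a finite morphism of curves. If $f\colon C \to C'$ is a dominant morphism of irreducible complex curves of degree $n$, then by Riemann--Hurwitz $g(C') \le g(C)$ is \emph{false} in general, but the inequality we want goes the other way and is the easy one only when $n=1$; so the real content is that $n=1$. When $f$ is birational, $g(X_0(M(-,r))) = g(A_0(M(-,r)))$ on the level of geometric genus (normalization is insensitive to birational modification), and then Theorem~\ref{theorem:genus} gives $g(X_0(M(-,r))) \le c\cdot h(r)^2$ directly with the \emph{same} constant $c$. So the proof is essentially: (i) cite Theorem~\ref{theorem:genus} for $A_0$; (ii) cite Dunfield to get birationality of $X_0 \to A_0$ under the $H^1(\cdot;\Z/2\Z)$ hypothesis; (iii) conclude equality of geometric genera and hence the bound.

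The main obstacle is step (ii): making precise exactly what Dunfield's result gives and checking it applies uniformly across the whole family $\{M(-,r)\}$ rather than to a single manifold. One has to confirm that the hypothesis ``$|H^1(M(-,r);\Z/2\Z)|=2$'' — which is the statement that the relevant obstruction group is as small as possible — is genuinely the condition that forces the restriction map on the canonical component to be generically injective, and that no further hypotheses (e.g.\ on the trace field, or on whether $X_0$ contains the character of a representation with non-integral traces) are hidden in the reference. A secondary, more bookkeeping-level point is that the constant $c$ in the Corollary is literally the constant from Theorem~\ref{theorem:genus} (depending on $M$ and the framing of the second cusp), since a birational map contributes nothing; if in fact Dunfield's map is only shown to have degree bounded by $|H^1(M(-,r);\Z/2\Z)|$ in general, then restricting to the case where this is $2$ still caps the degree at a constant, and Riemann--Hurwitz combined with the degree bound would let one absorb the loss into $c$ — but one should check whether the clean birational statement is available, as it gives the sharpest constant.
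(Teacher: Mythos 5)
Your proposal is correct and is exactly the paper's (essentially one-line) argument: Dunfield's Corollary 3.2 bounds the degree of $i^*\colon X_0(M(-,r)) \to A_0(M(-,r))$ by $\tfrac12|H^1(M(-,r);\Z/2\Z)| = 1$ under the stated hypothesis, so the map is birational, geometric genus is a birational invariant, and Theorem~\ref{theorem:genus} gives the bound with the same constant. One caution about your fallback remark: if the map were only known to have degree $2$, Riemann--Hurwitz would \emph{not} let you absorb the loss into $c$, since a degree-$d$ cover of a curve of bounded genus can have arbitrarily large genus (hyperelliptic curves over $\PP^1$) because the ramification term is not controlled by the degree alone; fortunately the clean factor of $\tfrac12$ in Dunfield's bound makes the map birational, so this issue does not arise.
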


Unlike the genus or gonality of a curve, the degree of a variety is inherently tied to its embedding in ambient space.   We obtain  upper bounds for the degree of a `natural model' of $A_0(M(-,r))$. 

The proof of Theorem~\ref{theorem:1} breaks
naturally into cases, depending on whether or not one (or both) 
cusps of $M$ are geometrically isolated from the other cusp (see \cite{MR1202390} and \S~\ref{section:Isolationofcusps}).  Geometric isolation also plays a role in our determination of degree bounds. 

\begin{thm}\label{theorem:degree}
Let $M$ be a finite volume hyperbolic 3-manifold with two cusps. If $M(-,r)$ is hyperbolic, then  there is a  positive  constant $c_1$ depending only on $M$ and the framing of the second cusp such that
\[ d(A_0(M(-,r))) \leq  c_1\cdot h(r).\] 
If one cusp is geometrically isolated from the other cusp, then there is a  positive  constant $c_2$ depending only on $M$ and the framing of the second cusp, such that 
\[ d(A_0(M(-,r))) \leq c_2.\] 
\end{thm}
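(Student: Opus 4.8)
The plan is to realize the natural model of $A_0(M(-,r))$ as a plane curve produced from the fixed surface $A_0(M)\subset\C^4(m_1,l_1,m_2,l_2)$ by imposing the Dehn filling relation on the second cusp and eliminating the second-cusp coordinates, and then to bound its degree by an intersection-number computation on a compactification of that surface. First I would fix a smooth projective surface $\widehat{A}_0(M)$ birational to $A_0(M)$ on which the two coordinate projections become morphisms: $\phi\colon\widehat{A}_0(M)\to(\PP^1)^2$ recording the first-cusp pair $(m_1,l_1)$ and $\psi\colon\widehat{A}_0(M)\to(\PP^1)^2$ recording the second-cusp pair $(m_2,l_2)$. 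The surface $\widehat{A}_0(M)$, the maps $\phi,\psi$, the finitely many divisor classes $\phi^*[\{m_1=c\}]$, $\phi^*[\{l_1=c\}]$, $\psi^*[\{m_2=c\}]$, $\psi^*[\{l_2=c\}]$, and the intersection pairing on $\widehat{A}_0(M)$ all depend only on $M$ and the chosen peripheral framings, with no dependence on $r$. By Thurston's Dehn surgery theory and the discussion of \S 4, $X_0(M(-,r))$ sits inside $X_0(M)$ as a component of the sublocus where the $p/q$-filling slope of the second cusp is killed, which in eigenvalue coordinates is the relation $m_2^p l_2^q=1$ (the sign ambiguity $\pm1$ coming from passing between $\SL_2$ and $\PSL_2$ characters only affects the constants); hence the curve $A_0(M(-,r))$ is contained in $\overline{\phi\bigl(\psi^{-1}(D_r)\bigr)}$, where $D_r=\{m_2^p l_2^q=1\}\subset(\PP^1)^2_{m_2,l_2}$.

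For the general bound, the point is that $D_r$ is a curve of bidegree $(|p|,|q|)$ in $(\PP^1)^2$, so its class equals $|p|\,[\{m_2=c\}]+|q|\,[\{l_2=c\}]$, which grows only linearly in $h(r)=\max\{|p|,|q|\}$; pulling back by $\psi$ gives $[\psi^{-1}(D_r)]=|p|\,\psi^*[\{m_2=c\}]+|q|\,\psi^*[\{l_2=c\}]$ on $\widehat{A}_0(M)$, a nonnegative combination, with coefficients $|p|$ and $|q|$, of two fixed effective classes. The degree of the natural model of $A_0(M(-,r))$ is measured by the intersection numbers of its closure in $(\PP^1)^2_{m_1,l_1}$ against $\{m_1=c\}$ and $\{l_1=c\}$, and since pushing a curve forward along a morphism cannot increase such numbers, each of these is at most $[\psi^{-1}(D_r)]\cdot\phi^*[\{m_1=c\}]$, respectively $[\psi^{-1}(D_r)]\cdot\phi^*[\{l_1=c\}]$, computed on $\widehat{A}_0(M)$. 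Expanding by bilinearity, each is a combination of the four fixed intersection numbers above weighted by $|p|$ and $|q|$; bounding $|p|,|q|\le h(r)$ then yields $d(A_0(M(-,r)))\le c_1\,h(r)$, with $c_1$ an explicit expression in the intersection pairing on $\widehat{A}_0(M)$ and hence depending only on $M$ and the framing of the second cusp. (Equivalently one may phrase this as a degree estimate for the resultant, in the variables $m_2$ and $l_2$, of the defining polynomials of $A_0(M)$ together with $m_2^p l_2^q-1$, or in terms of Newton polygons as in the boundary-slope calculus.)

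For the second assertion, suppose that the unfilled cusp is geometrically isolated from the filled one. Then no Dehn filling on the second cusp changes the Euclidean structure on the first, so along the deformation space the cusp shape $\tau_1$, and therefore the longitudinal eigenvalue $l_1$ as a function of the meridional eigenvalue $m_1$, depends only on the first-cusp data; via the Neumann--Zagier description of the deformation space near the complete structure this produces an analytic relation between $m_1$ and $l_1$ there, and by irreducibility of $A_0(M)$ together with analytic continuation it propagates to a single fixed polynomial relation $P_1(m_1,l_1)=0$ holding identically on $A_0(M)$. In other words, $\phi$ maps the surface $A_0(M)$ into the fixed plane curve $\{P_1=0\}$, whose degree depends only on $M$ and the framings. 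Since $A_0(M(-,r))$ is irreducible of dimension one and is contained in $\overline{\phi(A_0(M))}\subseteq\{P_1=0\}$, it must be an irreducible component of $\{P_1=0\}$; therefore $d(A_0(M(-,r)))\le\deg P_1=:c_2$, uniformly in $r$.

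The step I expect to be the main obstacle is the isolation case: converting the hyperbolic-geometric hypothesis of geometric isolation --- deciding along the way which notion (strong versus first order) and which direction between the two cusps is the one actually needed; see \S\ref{section:Isolationofcusps} --- into the algebraic statement that the first-cusp projection contracts all of $A_0(M)$ onto one fixed plane curve. This requires passing from the local Neumann--Zagier analysis near the complete structure to a global statement on the irreducible surface $A_0(M)$, and it should mesh with the isolation case analysis already used in proving Theorem~\ref{theorem:1}. The remaining issues --- the $\SL_2$/$\PSL_2$ sign ambiguity in the filling relation, resolving $\phi$ and $\psi$ to morphisms on a common smooth model, checking that the component of $\phi(\psi^{-1}(D_r))$ to which the estimate is applied really is the canonical one, and matching our notion of degree to the normalization of the natural model used in \S\ref{section:DefinitionsandAlgebraicGeometry} --- are routine but must be handled with care.
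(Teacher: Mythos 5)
Your argument for the linear bound $d(A_0(M(-,r)))\leq c_1 h(r)$ is correct and is in substance the same as the paper's: the paper intersects $A_0(M)$ with $\mathcal{H}(r)$ directly in (projectivized) $\C^4$ and invokes B\'ezout's theorem, using $\deg\mathcal{H}(r)=\max\{|p|,|q|\}$ together with the fact that the coordinate projection $\varpi_1$ does not increase degree, whereas you phrase the same estimate as an intersection-number computation on a smooth model mapping to the two copies of $(\PP^1)^2$. Both versions hinge on Lemma~\ref{lemma:containment} placing $A_0(M(-,r))$ inside the closure of $\varpi_1(A_0(M)\cap\mathcal{H}(r))$ and on the linear growth in $h(r)$ of the class of the filling divisor; the difference is cosmetic and only affects the constant (bidegree versus plane degree).

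The isolation case, however, has a genuine gap. Geometric isolation is not a symmetric relation, so the hypothesis ``one cusp is geometrically isolated from the other'' splits into two cases, and you treat only one of them. You assume the \emph{unfilled} (first) cusp is isolated from the filled (second) one; by Lemma~\ref{lemma:geoiso} this is exactly the case in which $\varpi_1(A_0(M))$ is a fixed irreducible plane curve, and your conclusion that $A_0(M(-,r))$ is that fixed curve, hence of bounded degree, is correct --- this is Lemma~\ref{lemma:geoisosamecurve}. But the hypothesis also covers the case in which only the \emph{filled} cusp is isolated from the unfilled one, i.e.\ $\varpi_2(A_0(M))$ is a curve while $\varpi_1(A_0(M))$ is dense in $\C^2$. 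There your mechanism fails outright: there is no fixed relation $P_1(m_1,l_1)=0$ holding identically on $A_0(M)$, and the curves $A_0(M(-,r))$ genuinely vary with $r$, so they cannot all be components of a single plane curve. The paper's argument for this case is different: since $\varpi_2(A_0(M))$ is a curve, $\varpi_2(A_0(M)\cap\mathcal{H}(r))$ is a finite set of points $P_r=(a_r,b_r)$, so each curve component of $A_0(M)\cap\mathcal{H}(r)$ has the form $D_r\times P_r$, and the ideal of $D_r$ is generated by the specializations $\varphi_i(x_1,y_1,a_r,b_r)$ of a fixed generating set of $\mathfrak{U}_{A_0(M)}$; the degrees of these in $(x_1,y_1)$ are bounded by the degrees of the $\varphi_i$, independently of $r$ (this is the computation in the proof of Lemma~\ref{lemma:key2}). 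Your proof needs this second argument, or an equivalent, to cover the full hypothesis of the theorem.
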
 
Work of Hoste and Shanahan
\cite{MR2047468} demonstrates that the degree
bounds in Theorem~\ref{theorem:degree}
are  sharp.

In contrast with the main results of this paper, we also show
that by filling two cusps of a three
cusped manifold, we can construct examples of knots 
(so-called double twist knots) for which the gonality
of the canonical component can be made arbitrarily large (see \S 11.2 for
details).
Existence of families of curves with large gonality has been the
subject of much recent research, including connections between the existence of towers of  curves whose gonality goes to infinity and 
expander graphs \cite{MR2922374}.

We also remark
on the gonalities of the character varieties of the  once-punctured torus bundles of tunnel number
one \cite{OPTBTNO}.  These manifolds are all integral surgeries on one cusp of the
Whitehead link and the corresponding  gonalities are all  equal to one or two.  
Both the double twist knot examples and the once-punctured torus bundles of tunnel number
one examples illustrate linear growth in the genus of the character varieties.  We do not know if a
quadratic bound such as in Theorem~\ref{theorem:genus} is optimal.

\section{Acknowledgements}

The first author would like to thank the University of Texas at Austin and the Max Planck Institut f\"ur Mathematik for their hospitality while working on this manuscript.  She would also like to thank Martin Hils for helpful discussions. This work was partially supported by a grant from the Simons Foundation (\#209226 to Kathleen Petersen), and by the National Science Foundation (to Alan Reid).

\section{Algebraic Geometry and Preliminary  Lemmas}\label{section:DefinitionsandAlgebraicGeometry}

\subsection{Preliminaries}\label{section:preliminaries}

We begin with some algebraic geometry. We follow Shafarevich \cite{MR1328833} and Hartshorne \cite{MR0463157}.  The `natural models' of the character varieties of interest are complex affine algebraic sets, which 
typically have singularities at infinity.   We will  rely on an analysis of  rational maps between possibly singular affine sets. Such a map $f:X\dashrightarrow Y$  behaves (on most of $X$ and $f(Y)$) as  a combination of a very nice map and a map with controlled bad parts.  We now present some algebraic geometry to make this precise.

Let $\C[T]$ be the polynomial ring with coefficients in $\C$ in the variables $T_1, \dots, T_n$.  A (Zariski) closed subset of $\C^n$ is a subset $X\subset \C^n$ consisting of the common vanishing set of a finite number of polynomials in $\C[T]$.  We let $\mathfrak{U}_X$ denote the {\em ideal of $X$}, the ideal in $\C[T]$ consisting of all polynomials which vanish on $X$.   If $I$ is an ideal in $\C[T]$ we let $V(I)$ denote the \em vanishing set \rm of $I$ in $\C^n$, and if $f\in \C[T]$ let $V(f)$ be the vanishing set of $f$ in $\C^n$.
The {\em coordinate ring} of $X$  is $\C[X]=\C[T]/{\mathfrak U}_X$.  This ring is the ring of all polynomials on $X$,  up to the equivalence of being equal on $X$; adding any polynomial in ${\mathfrak U}_X$ to a polynomial $f$ does not change the value of $f$ on $X$.  If a closed set $X$ is irreducible then the field of fractions of the coordinate ring $\C[X]$ is the {\em function field} or {\em field of rational functions } of $X$; it is denoted $\C(X)$. 

 A function $f$ defined on $X$ with values in $\C$ is {\em regular} if there exists a polynomial $F(T)$ with coefficients in $\C$ such that $f(x)=F(x)$ for all $x\in X$. 
A map $f:X \rightarrow Y$ is {\em regular} if there exist $n$ regular functions $f_1, \dots, f_n$ on $X$ such that $f(x) = (f_1(x), \dots, f_n(x))$ for all $x\in X$. 
Any regular map $f:X \rightarrow Y$ defines a  pullback homomorphism of $\C$-algebras   $f^*:\C[Y] \rightarrow \C[X]$ as follows.   If $u:X\rightarrow Z$ we define $v:Y \rightarrow Z$ by $v(x)=u(f(x))$.  We define $f^*$ by $f^*(u)=v$. 
 A {\em rational map} $f:X\dashrightarrow Y \subset \C^m$ is an $m$-tuple of rational functions $f_1,\dots, f_m \in \C(X)$ such that, for all points $x\in X$ at which all the $f_i$ are defined, $f(x) = (f_1(x), \dots, f_m(x))\in Y.$  We define the pullback $f^*$ as above.
A rational map $f:X\dashrightarrow Y$ is called {\em dominant }  if  $f(X)$ is (Zariski) dense in $Y$. 
A rational map $f:X\dashrightarrow Y$ is called {\em birational} if $f$ has a rational inverse, and in this case 
we say that $X$ and $Y$ are birational.   A regular map $f:X\rightarrow Y$ is {\em finite} if $\C[X]$ is integral over $\C[Y]$. 
 These definitions naturally extend to the projective setting. A {\em morphism} $\varphi :X\rightarrow Y$ is a continuous map such that for every open $V\subset Y$ and every regular function $f:Y\rightarrow \C$ the function $f\circ \varphi: \varphi^{-1}(Y)\rightarrow \C$ is regular.


Character varieties are naturally affine sets.  It is useful to consider an embedding of such an affine set into projective space.
Given an affine variety $X$, a {\em projective completion} of $X$ is the Zariski closure of an embedding of $X$ in some projective space.  A  {\em smooth projective model} of $X$ is a  smooth projective variety birational to a projective completion of $X$.  Such a model is unique up to isomorphism if $X$ is a curve.    The {\em degree } of any projective curve $X$, denoted $d(X)$ is the maximum number of points of intersection of $X$ with a hyperplane not containing any component of $X$.  This depends on the specific embedding of $X$.

The gonality of a curve is, in the most basic sense, the degree of a map from that curve to $\C^1$.  Intuitively the degree of a map  is  the number of preimages of  `most points' in the image.
Let $X$ and $Y$ be irreducible varieties of the same dimension and $f:X\dashrightarrow Y$ a dominant rational map.  The degree of the field extension $f^*(\C(Y))\subset \C(X)$ is finite and   is called the {\em degree} of $f$. That is, 
\[ \text{deg} f = [\C(X):f^*(\C(Y))].\] 
This is well defined for rational and regular maps.

We are now in a position to see that dominant rational maps between possibly singular curves can be decomposed in a pleasing manner. 

\begin{prop}\label{prop:mapsarenice} 
Let $X$ be a possibly singular irreducible affine variety of dimension $n$ and $Y$ an algebraic set of dimension $n$. 
Assume that $g:X\dashrightarrow Y$ is a dense rational map.  
\begin{enumerate}
\item $\deg(g)=d$ is finite.  
\item For all but finitely many co-dimension one subvarieties $W$ of $X$, \[ \deg(g\mid_W)\leq d.\]
\item\label{item:3} If $n\leq 2$ then $g^{-1}(y)$ consists of at most $d$ points except for perhaps finitely many $y \in Y$.  In $X$ there are finitely many irreducible sets of co-dimension one such that if $x$ is not contained in the union of these sets  then $g^{-1}(g(x))$ contains only  finitely many points.
\end{enumerate}

\end{prop}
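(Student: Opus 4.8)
The plan is to reduce everything to standard facts about dominant morphisms of irreducible varieties of equal dimension, after disposing of the singularities of $X$ and the indeterminacy of $g$. \emph{Part (1)} is pure field theory: since $g$ is dense, $g^*\colon\C(Y)\hookrightarrow\C(X)$ is injective, and since the image of $g$ is irreducible and dense in $Y$ the set $Y$ is irreducible. Now $\C(X)$ is finitely generated over $\C$, hence over the subfield $g^*\C(Y)$, and both fields have transcendence degree $n$ over $\C$; so $\C(X)/g^*\C(Y)$ is a finitely generated algebraic, hence finite, extension. Put $d=[\C(X):g^*\C(Y)]$.

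For (2) and (3) I would first replace $X$ by the closure $Z$ of the graph of $g$ in $X\times\overline Y$, where $\overline Y$ is a projective completion of $Y$: the projection $\pi\colon Z\to X$ is birational and an isomorphism over the domain of definition $\operatorname{dom}(g)$, while $\hat g\colon Z\to\overline Y$ is a morphism; when $n\le 2$ one may, in addition, take $Z$ to be a smooth projective model of $X$ (classical resolution of indeterminacy). Then $\C(Z)=\C(X)$, so $\hat g$ is dominant of degree $d$, and since $\dim Z=\dim Y$ it is generically finite; hence there is a dense open $V\subseteq Y$ over which $\hat g$ is finite flat of degree $d$, and, because $\pi$ is an isomorphism over $\operatorname{dom}(g)$, for every $y$ there is an injection $g^{-1}(y)\hookrightarrow\hat g^{-1}(y)$. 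This already yields the first assertion of \emph{Part (3)} over $V$: $|g^{-1}(y)|\le d$ for $y\in V$. For $n=1$ one can take $V=Y$, as $\hat g$ is then a finite morphism of projective curves; for $n=2$ one applies Stein factorization $\hat g=h\circ f$, where $h$ is finite of degree $d$ and $f$ is proper with connected fibres (hence an isomorphism away from finitely many points of its target, contracting finitely many curves), and then analyzes $h$ locally at the generic point of each component of $Y\setminus V$ (passing to normalizations) to conclude that $|g^{-1}(y)|\le d$ for all but finitely many $y\in Y$. For the last assertion of (3): the set $E\subseteq Z$ of points lying on a positive-dimensional fibre of $\hat g$ is closed, by upper semicontinuity of fibre dimension, and proper in $Z$ since $\hat g$ is generically finite; so $\pi(E)$ is a proper closed subset of $X$ (as $\pi$ is proper), hence is contained in a hypersurface $V(f_0)\cap X$ with finitely many irreducible components, each of codimension one, and off their union $g^{-1}(g(x))$ is finite.

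\emph{Part (2):} Discard the finitely many codimension-one $W\subseteq X$ that lie in the indeterminacy locus of $g$, in the non-normal locus of $X$, or among the divisors contracted by $g$ --- the last are finitely many because the proper transform of such a $W$ is a divisor of $Z$ contained in the set $E$ above. For each of the remaining $W$, the restriction $g|_W\colon W\dashrightarrow Z_W:=\overline{g(W)}$ is a dominant rational map onto a prime divisor of $Y$, so by (1) $\deg(g|_W)=[\C(W):\C(Z_W)]$; this number is the residue degree of the discrete valuation $v_W$ of $\C(X)$ over the valuation it induces on $\C(Y)$, and the fundamental inequality $\sum_i e_if_i\le[\C(X):\C(Y)]=d$ for the valuations of $\C(X)$ lying over the latter gives $\deg(g|_W)\le d$.

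The main obstacle is the finiteness bookkeeping behind the phrases ``all but finitely many'' and ``perhaps finitely many'': showing that only finitely many divisors of $X$ are contracted by $g$, that $g|_W$ is still dominant onto a divisor of $Y$ for all but finitely many $W$, and --- the most delicate point, and the one that forces $n\le 2$ --- that over $Y\setminus V$ the finite fibres of cardinality exceeding $d$ and the positive-dimensional fibres are confined to a finite subset of $Y$ (for $n\ge 3$ they can fill a divisor). Everything else --- transcendence degree, generic flatness, Stein factorization, semicontinuity of fibre dimension, the fundamental inequality for extensions of a valuation --- is standard.
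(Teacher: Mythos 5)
Your proposal is correct in substance, but it reaches the conclusion by a genuinely different route. For part (1) you argue purely field-theoretically (both function fields are finitely generated of transcendence degree $n$ over $\C$, so the extension $\C(X)/g^*\C(Y)$ is finite), whereas the paper first resolves $X$ and $Y$ by Hironaka, resolves the indeterminacy of the induced map, and extracts finiteness of the degree from Stein factorization; your version is more elementary and self-contained. For parts (2) and (3) the structural difference is larger: you pass to the closure $Z$ of the graph of $g$ in $X\times\overline{Y}$, so that only the single proper birational projection $\pi\colon Z\to X$ needs to be controlled, and you then use generic flatness, semicontinuity of fibre dimension (to identify the contracted divisors as the finitely many divisorial components of the locus $E$ of positive-dimensional fibres), and, for (2), the fundamental inequality $\sum_i e_if_i\le d$ of valuation theory applied to the divisorial valuation $v_W$. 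The paper instead resolves both source and target and invokes the Weak Factorization Theorem to write the resulting birational maps as chains of blow-ups and blow-downs, locating the exceptional codimension-one locus that way, and treats (3) by observing that a connected-fibre morphism between surfaces is birational. What your approach buys is the avoidance of Weak Factorization (a heavy input for this statement) and a sharper, more quantitative proof of (2): the residue-degree bound gives $\deg(g|_W)\le d$ directly rather than via ``composition of well-behaved maps.'' What the paper's approach buys is uniformity --- one reduction serves all three parts. One caveat applies equally to both arguments: in part (3) the count $|g^{-1}(y)|\le d$ for a finite map is only valid over the normal locus of the target, so the ``finitely many exceptional $y$'' must be understood after excising the (possibly one-dimensional) non-normal locus of $Y$; this is harmless in every application in the paper, where $Y$ is $\C^2$ or a curve, and your explicit mention of ``passing to normalizations'' is exactly the right repair.
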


\begin{proof}
By Hironaka \cite{MR0199184} one can resolve the singularities of a variety by a proper birational map (which has degree one).   Therefore, the variety $X$ is birational to a smooth projective variety $X'$, and similarly $Y$ is birational to a smooth projective $Y'$.   The rational map $g:X \dashrightarrow Y$ induces a rational map $g':X' \dashrightarrow Y'$ of the same degree by pre and post composition with the birational inverses,  
\[ X \overset{\alpha}{\dashrightarrow} X' \overset{g'}{\rightarrow} Y' \overset{\beta}{\dashrightarrow}  Y.\]
By Hironaka's resolution of indeterminacies \cite{MR0199184}, $g'$ can be resolved into a regular map by a sequence of blowups.  
Therefore we can take $g'$ to be  a proper morphism. 
By Stein Factorization (see \cite{MR0463157} Corollary 11.5), a proper morphism factors as a proper morphism with connected fibers followed by a finite map. It follows that the degree is finite.
By the Weak Factorization Theorem \cite{MR2013783} these  birational maps (since they are over $\C$) can be decomposed as a series of blow-ups and blow-downs over smooth complete varieties. 

A blow up or  blow down is defined for all points outside of a codimension one set.  Therefore, outside of a finite union of codimension one sub varieties, the map $g$ is the composition of  isomorphisms, a proper morphism with connected fibers, and a finite map. If $W\subset X$ is a codimension one subvariety not in this set, then the degree of $g\mid_W$ is at most the degree of $g$.

If $X$ is a curve, then  part~\ref{item:3} follows from the observation that in this case, Stein Factorization implies that $g'$ is a finite map.  If $X$ is a surface, then  it suffices to show content of the lemma for $X$ and $Y$  smooth projective surfaces, and $g:X\rightarrow Y$ a dense morphism.   By Stein factorization such a $g$ can be decomposed into  a map with connected fibers followed by  a finite map.  For the finite map, the  number of pre-images of a point is bounded above by the degree.  Since the maps are between surfaces, the map with connected fibers must be birational, so part~\ref{item:3} follows. 



\end{proof}


 \begin{lemma}\label{lemma:irreducible} Let $f:X\dashrightarrow Y$ be a dominant rational map of affine algebraic sets.  If $X$ is irreducible, then $Y$ is irreducible.  (That is, the closure of $f(X)$ is irreducible.)
\end{lemma}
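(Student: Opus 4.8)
The plan is to reduce the statement to three standard facts about the Zariski topology: a nonempty open subset of an irreducible space is irreducible, the continuous image of an irreducible space is irreducible, and the closure of an irreducible set is irreducible. The only genuine care needed is that a rational map is not defined everywhere, so one must first pass to its domain of definition before invoking the topological lemmas.

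First I would fix a nonempty Zariski-open set $U\subseteq X$ on which all the component rational functions $f_i$ of $f$ are regular; then $f|_U:U\rightarrow Y$ is a morphism, in particular Zariski-continuous. Since $X$ is irreducible, the nonempty open set $U$ is dense in $X$ and is itself irreducible. Hence $f(U)$ is an irreducible subset of $Y$, and therefore so is its closure $\overline{f(U)}$. By the definition of a rational map one has $\overline{f(X)}=\overline{f(U)}$, and ``dominant'' means precisely that this closure is all of $Y$; thus $Y=\overline{f(U)}$ is irreducible. If one prefers the contrapositive form: a splitting $Y=Y_1\cup Y_2$ into proper closed subsets would pull back under the continuous map $f|_U$ to a splitting $U=(f|_U)^{-1}(Y_1)\cup (f|_U)^{-1}(Y_2)$ into closed subsets of $U$, neither of which can equal all of $U$ (otherwise $f(U)$, hence its closure $Y$, would lie in a proper $Y_i$), contradicting the irreducibility of $U$.

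There is no real obstacle in this argument; in particular it uses nothing from Proposition~\ref{prop:mapsarenice} (no resolution of singularities, no Stein factorization). The single point to get right is the bookkeeping flagged above --- interpreting ``dominant rational map'' as $f$ being regular on a dense open $U\subseteq X$ with $\overline{f(U)}$ dense in $Y$, and restricting to $U$ so that the topological lemmas are applied to an honest continuous map rather than to one that is only generically defined.
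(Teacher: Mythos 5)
Your proof is correct, but it follows a genuinely different route from the paper's. The paper argues algebraically: it first reduces to the case of a regular map (citing Proposition~\ref{prop:mapsarenice}), then uses that a dominant map induces an injection $f^*:\C[Y]\hookrightarrow\C[X]$ of coordinate rings, that irreducibility of $X$ makes $\C[X]$ an integral domain, and hence that the subring $\C[Y]$ is a domain, so $\mathfrak{U}_Y$ is prime and $Y$ is irreducible. You instead argue purely topologically: restrict to the dense open locus $U$ where $f$ is regular, note $U$ is irreducible, and invoke the facts that continuous images and closures of irreducible sets are irreducible. Your handling of the rational-map subtlety is actually the more standard and transparent one --- the paper's opening sentence, ``by Proposition~\ref{prop:mapsarenice} it suffices to assume $f$ is regular,'' is a slightly awkward appeal, since that proposition is about degrees and fibers of maps rather than about replacing a rational map by a regular one; restricting to the domain of definition, as you do, is exactly what is needed to make that reduction honest. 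What the paper's approach buys is consistency with the coordinate-ring formalism it sets up in \S\ref{section:preliminaries} (and the useful byproduct that $f^*$ embeds $\C[Y]$ into $\C[X]$); what yours buys is a shorter, more elementary argument with no machinery at all. Either proof is acceptable.
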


\begin{proof} By Proposition~\ref{prop:mapsarenice}  it suffices to assume that $f$ is regular. 
The map $f$ induces a map $f^*:\C[Y] \rightarrow \C[X]$ of coordinate rings. If $X\subset \C[T]$ and $Y\subset \C[S]$ then $\C[X]=\C[T]/{\mathfrak U}_X $ and $\C[Y]=\C[S]/{\mathfrak U}_Y$.   Since $f(X)$ is dense in $Y$, it follows that $f^*$ is an isomorphic inclusion (see \cite{MR1328833} page 31).  An algebraic set is irreducible if and only if the vanishing ideal is prime.  Therefore ${\mathfrak U}_X $ is prime and $\C[X]$ has no zero divisors.  We conclude that $\C[Y]$ has no zero divisors, and therefore ${\mathfrak U}_Y$ is prime and $Y$ is irreducible. 

\end{proof}

 \subsection{Gonality}
 
The algebraic definition of gonality is as follows. 
 \begin{definition}
The {\em gonality} of a curve $X$ in $\PP^n$ is 
\[ \gamma(X) =  \min\{ [\C(X):\C(x)]:x\in \C(X)-\C\}.  \]
\end{definition}
Here, $x \in \C(X)$ is generated by a single element. Since  $\C(X)$ is the function field of $X$ and consists of the rational functions on $X$, we can reformulate the definition; the gonality of $X$ is the minimal degree of a dominant rational map from $X$ to $\PP^1$. Therefore, gonality is a birational invariant; two varieties which are birational (but not necessarily  isomorphic) have equal gonality.  If $X$ is an affine curve, $X$ is birational to a smooth projective model and as such the gonality of $X$ and $X'$ are equal, and this is the minimal degree of a dominant rational map from $X$ to $\C$.

By Noether's Normalization theorem, if $X$ is complex projective variety of dimension $n$, then there is a linear subspace disjoint from $X$ such that the projection  onto that subspace is finite to one and surjective.   This is also true for affine varieties; if $Z$ is an  affine variety of dimension $n$ there is a finite degree map from $Z$ onto a dense subset of  $\C^n$.  
Therefore, gonality is always finite.

\begin{remark}Gonality of an affine variety is sometimes defined as the minimal degree of a regular map to $\C$ (or of a finite map to $\C$).  This (regular) gonality is easily seen to be greater than or equal to the (rational) gonality.  In fact, there are examples where these two notions differ.  The affine curve determined by the equation $xy=1$ is not isomorphic to $\C$ and so the (regular) gonality is not one.  One can see that there is a regular degree two map onto $\C$, by projecting onto the line $y=-x$.  Therefore, the (regular) gonality is two. Projection onto the $x$ coordinate is a dominant map to $\C$ as the image is $\C-\{0\}$ and has degree one; the (rational) gonality is one.

In the projective setting the gonality defined by using rational, regular, or finite maps are all equal, and are therefore equal to the (rational) gonality of a corresponding affine variety.  If there is a dominant rational map $g:X \dashrightarrow Y $ between affine (possibly singular) curves then this induces a finite dominant morphism $f$ between  smooth projective models $X'$ and $Y'$ for $X$ and $Y$, where the degree of $g$ and the degree of $f$ are the same.   

Gonality can also be defined using the language of Riemann surfaces instead of complex curves.  Here the definitions involving rational  or regular maps correspond to meromorphic or holomorphic functions from the Riemann surface to $\PP^1$.  Similarly, one can define the gonality in terms of the minimal degree of a branched or unbranched covering of $\PP^1$ by the Riemann surface.

 \end{remark}

\subsection{Key Lemmas}

 The following lemma will be essential for our gonality bounds.

\begin{lemma}\label{lemma:abstractgonalitybounds}
Let $g:X\dashrightarrow Y$ be a degree $d$ dominant rational map  of irreducible (possibly singular) affine or projective curves. Then 
\[   \gamma (Y) \leq    \gamma (X) \leq  d \cdot   \gamma (Y).    \]
\end{lemma}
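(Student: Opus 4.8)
The plan is to prove the two inequalities separately, using the composition of rational maps between curves together with the multiplicativity of degree.

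\emph{Upper bound $\gamma(Y) \leq \gamma(X)$ composed with $g$.} Let $\varphi : X \dashrightarrow \PP^1$ be a dominant rational map realizing the gonality of $X$, so $\deg \varphi = \gamma(X)$. I would like to produce a dominant rational map $Y \dashrightarrow \PP^1$ whose degree divides into something controlled by $\gamma(X)$ and $d$. The natural candidate uses the fact that $g$ is dominant, hence $g^*$ realizes $\C(Y)$ as a subfield of $\C(X)$ of index $d$ by the definition of degree given in the excerpt. Consider the subfield $\C(Y) \cdot \varphi^*(\C(x))$ generated inside $\C(X)$; more simply, restrict attention to field theory: $\C(X) \supset f^*\C(Y) =: L$ with $[\C(X):L] = d$, and $\C(X) \supset \C(\varphi) := K$ with $[\C(X):K] = \gamma(X)$, where $K \cong \C(\PP^1)$ is a rational function field. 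Then $L \cap K$ is a subfield of $K$; since $K$ is rational of transcendence degree one over $\C$ and $L\cap K \neq \C$ (it has transcendence degree one, being a finite-index... — actually I need $[\C(X):L\cap K] \le [\C(X):L]\,[\C(X):K]$), Lüroth's theorem says $L\cap K$ is itself rational, i.e. $L\cap K = \C(u)$ for a single $u$. This $u$ lies in $L = f^*\C(Y)$, so $u = f^*(w)$ for some $w \in \C(Y) - \C$, giving a dominant rational map $Y \dashrightarrow \PP^1$, whence $\gamma(Y) \le [\C(Y):\C(w)]$. Finally $[\C(Y):\C(w)] \cdot d = [\C(X):\C(u)] \le [\C(X):L][\C(X):K] = d\cdot\gamma(X)$ would give $[\C(Y):\C(w)] \le \gamma(X)$, as desired. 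Here I used that $[\C(X):L\cap K]$ divides into a product bounded by $d\,\gamma(X)$, and that $d = [L:L\cap K]$ divides $[\C(X):L\cap K]$.

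\emph{Lower bound part, i.e. $\gamma(X) \leq d \cdot \gamma(Y)$.} This direction is the easy one: let $\psi : Y \dashrightarrow \PP^1$ realize $\gamma(Y)$, with $\deg\psi = \gamma(Y)$. Then $\psi \circ g : X \dashrightarrow \PP^1$ is a dominant rational map (composition of dominant rational maps of curves is dominant — both have dense image, and denseness is preserved), hence $\gamma(X) \le \deg(\psi\circ g) = \deg\psi \cdot \deg g = \gamma(Y)\cdot d$ by multiplicativity of the degree of field extensions, $[\C(X):(\psi g)^*\C(\PP^1)] = [\C(X):g^*\C(Y)]\,[\C(Y):\psi^*\C(\PP^1)]$. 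Throughout, Proposition~\ref{prop:mapsarenice} guarantees all these degrees are finite, and since gonality is a birational invariant I may freely replace $X$, $Y$ by their smooth projective models so that all maps are genuine finite morphisms of smooth projective curves.

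\emph{Main obstacle.} The delicate point is the upper bound $\gamma(Y)\le\gamma(X)$: one cannot simply push the gonality map on $X$ forward through $g$ (there is no map $X \to Y$ in the wrong direction), so the argument must be carried out at the level of function fields, and the key input is Lüroth's theorem applied to the subfield $L \cap K$ of the rational field $K = \C(\varphi)$, together with a careful bookkeeping of the indices to see that $[\C(Y):\C(w)] \le \gamma(X)$. An alternative, perhaps cleaner, formulation: take the gonality pencil $\varphi$ on $X$ and consider it as a divisor class; push forward and intersect with fibers of $g$ — but the field-theoretic Lüroth argument is the most robust and is the step I would write out in full.
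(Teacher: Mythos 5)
Your second inequality ($\gamma(X)\le d\cdot\gamma(Y)$, by composing a gonality map of $Y$ with $g$ and using multiplicativity of field-extension degrees) is correct and is exactly what the paper dismisses as ``clear.'' The problem is the first inequality, where your argument has a genuine gap at the step you yourself flagged: the inequality $[\C(X):L\cap K]\le[\C(X):L]\,[\C(X):K]$ is \emph{false} for field extensions (the group-theoretic analogue for indices of subgroups does not transfer). Worse, $L\cap K$ need not even have transcendence degree one, so Lüroth may produce nothing. A concrete illustration of both failures at once: in $\C(t)$ take $L=\C(t^2)$ and $K=\C((t+1)^2)$, the fixed fields of the involutions $t\mapsto -t$ and $t\mapsto -t-2$. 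These generate an infinite subgroup of $\mathrm{PGL}_2(\C)$ (their product is the translation $t\mapsto t+2$), so $L\cap K=\C$ even though both subfields have index $2$. In your setting there is no reason for $g^*\C(Y)$ and $\varphi^*\C(\PP^1)$ to intersect in anything larger than $\C$, so the element $u$, and hence $w$, may simply not exist, and even when $L\cap K\ne\C$ your index bookkeeping does not close.

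The paper's proof (following Poonen) pushes $\varphi$ down to $\C(Y)$ by a different device that avoids intersecting subfields: let $f\in\C(X)$ realize $\gamma(X)$ and let $p(T)=T^r+a_{r-1}T^{r-1}+\cdots+a_0\in\C(Y)[T]$ be its characteristic polynomial over $\C(Y)$, where $r=d$. Over a finite extension $M$ of $\C(Y)$ splitting $p$, the roots $f_1,\dots,f_r$ are Galois conjugates of $f$, each of degree $[M:\C(X)]\cdot\gamma(X)$ as a function on the curve with function field $M$; the polar divisor of each coefficient $a_j$ (an elementary symmetric function of the $f_i$) is bounded by $\sum_i (f_i)_\infty$, so $a_j$ has degree at most $r[M:\C(X)]\gamma(X)=[M:\C(Y)]\gamma(X)$ in $M$, hence degree at most $\gamma(X)$ as an element of $\C(Y)$. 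Since $f$ is non-constant, some $a_j$ is non-constant, and that coefficient is a dominant rational map $Y\dashrightarrow\PP^1$ of degree at most $\gamma(X)$. This ``take the coefficients of the characteristic polynomial'' (norm/trace) trick is the standard way to descend a function along a finite extension of function fields, and it is the idea your proposal is missing.
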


\begin{proof}
The second inequality is clear.  To show the first, we follow \cite{MR2335995} Proposition A.1, including the proof for completeness. 
Let $f\in \C(X)$ be a degree $d$ map realizing the gonality of $X$.
Let $p(T)\in \C(Y)[T]$ be the   characteristic polynomial of $f$ when $f$ is viewed as an element in the extension $\C(X)$ of $\C(Y)$.  Let $M$ be a finite extension of $\C(Y)$ such that $p(T)$ factors into the product of  $r$ monomials $(T-f_i)$, with $f_i\in M$. 

Viewed as a function in $M$, the degree of $f$ is $d [M:\C(X)]$.  This is also the degree of the $f_i$ since they are all in the same ${\rm Aut}(M/\C(Y))$ orbit.  The polar divisors of a coefficient of $p$ viewed in $M$ is at most the sum of the polar divisors of the $f_i$.  (These polar divisors are just the effective divisors $(f_i)_{\infty}$.) 

Therefore, each coefficient has degree at most $d r [M:\C(X)]=d [M:\C(Y)]$ as a function in $M$, and therefore degree at most $d$ as a function in $\C(Y)$. Since $f$ is non-constant, at least one of these coefficients is non-constant.  Therefore the gonality of $Y$ is at most $d$.

\end{proof}

We now introduce some notation.  Commonly $\alpha(x) \in \Theta(\beta(x))$ is written  to mean that there are positive constants $c_1$ and $c_2$ independent of $x$ such that 
\[ c_1\beta(x) \leq \alpha(x) \leq c_2 \beta(x).\]  Notice that if $\alpha(x) \in \Theta(\beta(x))$ then also 
\[ c_2^{-1}\alpha(x) \leq \beta(x) \leq c_1^{-1}\alpha(x)\] so $\beta(x) \in \Theta(\alpha(x))$.  This defines an equivalence relation. 
\begin{definition}\label{definition:sim}
For positive quantities $\alpha=\alpha(x)$ and $\beta=\beta(x)$ we write $\alpha\sim \beta$ if $\alpha \in \Theta(\beta)$.

We will use $\sim$ in conjunction with quantities such as  $\gamma(X_0(M(-,r)))$ to mean  that the constants do not depend on $r$ but only on $M$ and the framing of $\partial M$. Note that $\alpha(x)\sim 1$ means that there is a positive constant $c$ independent of $x$ such that $\alpha(x)\leq c$. 
\end{definition}

We now compute the gonality of the curves given by  $x^p-y^q$ and $x^py^q-1$ in $\C^2(x,y)$.  First we establish some notation for these sets.

\begin{definition}\label{definition:psi} Let $p$ and $q$ be relatively prime integers with $q>0$. 
With $r=p/q$, let 
\[ \mathcal{C}(r)= V(x^{|p|}y^q-1) \cup V(x^{|p|}-y^q)\]
in $\C^2(x,y)$ and let 
\[ \mathcal{H}(r) = V(x^{|p|}y^q-1) \cup V(x^{|p|}-y^q) \]
in $\C^4$ where $x$ and $y$ are among the coordinates. 
(Therefore, $\mathcal{H}(r)$ and $\mathcal{C}(r)$ are each the union of  two hypersurfaces unless $p=0$.)

\end{definition}

\begin{lemma}\label{lemma:keygonality}
Assume that $p$ and $q$ are relatively prime integers. Then  the gonality of each irreducible component of $\mathcal{C}(r)\subset \C^2(x,y)$  is one.
\end{lemma}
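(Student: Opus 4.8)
The plan is to treat the two hypersurfaces comprising $\mathcal{C}(r)$ separately and to exhibit, for each irreducible component, an explicit rational function of degree one, hence a birational map onto a dense subset of $\C$. Recall that a dominant rational map to $\C$ of degree one realizes gonality one (it need not be regular or finite, by the Remark, so the curve $xy=1$ example is exactly the phenomenon we will use). So the whole proof reduces to writing down such a coordinate function on each component and checking its degree is $1$; the key input is that $x^{|p|}-y^q$ and $x^{|p|}y^q-1$ are built from monomials, so after passing to a component the equation expresses one variable as a power (up to roots of unity) of the other.

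First I would dispose of the degenerate case $p=0$: then $r=0$, $q=1$ (since $\gcd(p,q)=1$ forces $q=1$), and $\mathcal{C}(0)=V(y-1)$ as a set in $\C^2(x,y)$ — a single line, with $x$ a degree-one function, so gonality one. (If one prefers, $h(0)=1$ is set by convention and the "two hypersurfaces" caveat in Definition~\ref{definition:psi} is precisely to handle this.) Now assume $p\neq 0$ and write $a=|p|>0$, so $\gcd(a,q)=1$.

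Next I would analyze $V(x^a y^q - 1)$. On this set $x\neq 0$ and $y\neq 0$. Since $\gcd(a,q)=1$, choose integers $u,v$ with $ua+vq=1$ and consider the rational function $t = x^{-v}y^{u}$ on $\C^2$ restricted to the curve. On $V(x^ay^q-1)$ we have $y^q = x^{-a}$, and one checks $t^a = x^{-va}y^{ua} = x^{-va}(x^{-a})^{u} \cdot (\text{correction})$ — more cleanly, I would instead argue abstractly: the coordinate ring $\C[x^{\pm1},y^{\pm1}]/(x^ay^q-1)$ is isomorphic to the group ring $\C[\Z]=\C[t^{\pm1}]$ via the isomorphism of character lattices dual to $\Z\to\Z^2$, $1\mapsto(?)$; concretely the map sending a point of $V(x^ay^q-1)$ with $x=s^{q}$, $y=s^{-a}$ (the standard rational parametrization of this Zariski-closed subset of the torus) to $s\in\C^\ast$ is a birational bijection $\C^\ast\to V(x^ay^q-1)$, hence $V(x^ay^q-1)$ is birational to $\C$ and has gonality one by Lemma~\ref{lemma:abstractgonalitybounds} (or directly by definition, since the parametrization has a rational inverse given by a Laurent monomial in $x,y$, because $\gcd(a,q)=1$).

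Then I would handle $V(x^a - y^q)$, which may be reducible. Over $\overline{\C(x)}$ the polynomial $Y^q - x^a$ factors, and the irreducible components of $V(x^a-y^q)$ in $\C^2$ correspond to the $\gcd$-structure: writing $d'=\gcd(a,q)$, but here $\gcd(a,q)=1$, so $x^a-y^q$ is already irreducible, and the curve $\{y^q=x^a\}$ with $\gcd(a,q)=1$ is the classical cuspidal rational curve, parametrized by $x=s^q$, $y=s^a$, with rational inverse $s = x^{u'}y^{v'}$ where $u'a+\ldots$ — again choosing $u',v'\in\Z$ with $u'q+v'a=1$ gives $s = x^{v'}y^{u'}$ as a rational (even regular, since $u',v'$ may be taken $\geq 0$ after adjusting, but regularity is not needed) inverse. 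Hence this component is birational to $\C$ and has gonality one. (Strictly, since we want integer exponents and $\gcd(a,q)=1$, pick $u',v'$ with $qu'+av'=1$; then on $y^q=x^a$, $x^{v'}y^{u'}$ raised to appropriate powers recovers $x$ and $y$, giving the inverse.) In every case each irreducible component of $\mathcal{C}(r)$ admits a birational map to $\C$, so by the birational invariance of gonality (and since gonality of $\C$ is one) each component has gonality one.

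The main obstacle is purely bookkeeping: correctly identifying the irreducible components — one must be careful that $V(x^a-y^q)$ with $\gcd(a,q)=1$ is irreducible while in general $V(x^a-y^q)$ splits into $\gcd(a,q)$ components, each a scaled copy of the $\gcd=1$ case — and then writing the monomial inverse of the standard parametrization using the Bézout coefficients for $\gcd(a,q)=1$. There is no genuine geometric difficulty: the whole content is that monomial curves in the torus (or their cuspidal closures) are rational with monomial parametrizations admitting monomial rational inverses. A one-line alternative for the first hypersurface: projection to the $x$-coordinate on $V(x^ay^q-1)$ has image $\C^\ast$, which is dense in $\C$, and degree equal to the number of $y$ with $y^q = x^{-a}$, namely $q$ — that is \emph{not} degree one, so I emphasize that the right choice is the monomial $s$, not a coordinate projection, mirroring the $xy=1$ discussion in the Remark.
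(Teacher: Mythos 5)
Your proof is correct and is essentially the paper's argument: the paper also reduces to the case $p,q>0$, identifies $V(x^p-y^q)$ with $V(x^py^q-1)$ via $y\mapsto y^{-1}$, and uses the B\'ezout coefficients of $\gcd(p,q)=1$ to build a monomial birational map carrying the curve onto a dense subset of a line. Your monomial parametrization $s\mapsto(s^q,s^{\mp|p|})$ with Laurent-monomial inverse is exactly the restriction of the paper's coordinate change $\varphi^{-1}$ to $\{y'=1\}$, so the two write-ups differ only in packaging (parametrization versus ambient change of coordinates).
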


\begin{proof}
  If either $p$ or $q$ is zero, the polynomial is $x=1$ or $y=1$ and
  is therefore naturally a $\C^1$ and has gonality one.  Now assume
  that $p,q> 0$.  We will compute the gonality of $V(x^py^q-1)$ as the
  vanishing set of $x^p-y^q$ since this vanishing set is birationally
  equivalent to the vanishing set of $x^py^q-1$ under the map $(x,y)\mapsto
  (x,y^{-1})$.  

Since $p$ and $q$ are relatively prime, there are
  integers $a$ and $b$ such that $ap+bq=1$. Let $\varphi:\C^2(x,y)
  \dashrightarrow \C^2(x',y')$ be the rational map given by
\[ (x,y) \mapsto (x^by^{-a},x^{p}y^{q})=(x',y').\]
This has an inverse $\varphi^{-1}$ given by 
\[ (x',y') \mapsto ((x')^q(y')^{a},(x')^{-p}(y')^b). \]
The map  $\varphi$ is birational, so $\gamma(V(x^py^q-1))$ is equal to the gonality of the image of $V(x^py^q-1)$ under $\varphi$. This image is dense in $y'=1$.   As $y'=1$ naturally defines a $\C^1$ the gonality of $V(y'-1)$ and $V(x^py^q-1)$ are both one.

\end{proof}

\subsection{Gonality and Projection}
\

We will now prove a few lemmas which will be essential in  bounding the gonalities we are interested in. In $\S~\ref{section:gonalityindehnsurgeryspace}$ we will relate the situation below to that of the character varieties of interest. Recall the definition of $\mathcal{C}(r)$  and  $\mathcal{H}(r)$  from  Definition~\ref{definition:psi}.   By Lemma~\ref{lemma:keygonality} the gonality of (each irreducible component of) $\mathcal{C}(r) \subset \C^2$ is equal to one.

\begin{remark}
If a curve $C$ is reducible, by  $\gamma(C)$ we mean the maximal gonality of any irreducible curve component of $C$. 
\end{remark}

\begin{definition}\label{definition:varpiproj}
Define the maps $\varpi_1$ and $\varpi_2$ to be the   projection maps 
\[\varpi_i:\C^4(x_1,y_1,x_2,y_2)\rightarrow \C^2(x_i,y_i)\] for $i=1,2$.
 \end{definition}

By Definition~\ref{definition:psi}, with $\mathcal{H}(r) \subset$ $ \C^4(x_1,y_1,x_2,y_2)$ and $\mathcal{C}(r) \subset \C^2(x_2,y_2)$ then $\mathcal{C}(r)= \varpi_2(\mathcal{H}(r)).$

\begin{lemma}\label{lemma:V(r)curves}
Let $V$ be an irreducible surface in $\C^4(x_1,y_1,x_2,y_2)$ with $\dim_{\C}\varpi_i(V)>0$.  Then $V \cap \mathcal{H}(r)$ consists of a finite union of curves for all but finitely many $r$.   \end{lemma}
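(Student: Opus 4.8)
The plan is to reduce the statement to a dimension count on the irreducible surface $V$. Fix $r = p/q$ with $p, q$ relatively prime. By Definition~\ref{definition:psi}, $\mathcal{H}(r)$ is the union of the two hypersurfaces $V(x_2^{|p|} y_2^q - 1)$ and $V(x_2^{|p|} - y_2^q)$ in $\C^4$ (or a single hyperplane if $p = 0$), so $V \cap \mathcal{H}(r)$ is the union of $V \cap V(x_2^{|p|} y_2^q - 1)$ and $V \cap V(x_2^{|p|} - y_2^q)$; it suffices to treat each intersection separately. Since $V$ is an irreducible surface, intersecting it with a single hypersurface $H$ yields either all of $V$ (if $V \subseteq H$) or a closed subset of pure dimension $\leq 1$, i.e. a finite union of curves together with possibly some isolated points. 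So the entire content of the lemma is: for all but finitely many $r$, $V$ is \emph{not} contained in either $V(x_2^{|p|} y_2^q - 1)$ or $V(x_2^{|p|} - y_2^q)$.

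First I would handle the containment $V \subseteq V(x_2^{|p|} - y_2^q)$. If this holds for two distinct values $r = p/q$ and $r' = p'/q'$, then on $V$ both relations $x_2^{|p|} = y_2^q$ and $x_2^{|p'|} = y_2^{q'}$ hold identically. On the dense open subset of $V$ where $x_2, y_2$ are both nonzero, these multiplicative relations force $(x_2, y_2)$ to lie in a proper algebraic subgroup of $(\C^*)^2$ of dimension at most one — more concretely, eliminating one variable shows $x_2$ (equivalently $y_2$) satisfies a single nontrivial monomial equation, so $x_2$ is constrained to finitely many values on $V$, whence $\varpi_2(V)$ is at most one-dimensional; but we also get $y_2$ determined, so $\varpi_2(V)$ is finite, forcing $\dim_\C \varpi_2(V) \leq 0$, which would be fine unless one also tracks $\varpi_1$. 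Rather than go down that road, the cleaner argument: the function $x_2/y_2^{?}$ does not make sense, so instead consider the rational function $\psi = $ the restriction to $V$ of, say, $x_2$ and $y_2$ as elements of $\C(V)$. If $V \subseteq V(x_2^{|p|} - y_2^q)$ then in the field $\C(V)$ we have $x_2^{|p|} = y_2^q$. Pick a generator analysis: either $x_2$ and $y_2$ are both constant on $V$ — but then $\dim_\C \varpi_2(V) = 0$, and one would instead run the same argument with $\varpi_1$, which by hypothesis is also positive-dimensional, and the relation on $x_2, y_2$ being among the four coordinates still pins things down — or at least one of $x_2, y_2$ is nonconstant in $\C(V)$, in which case $x_2^{|p|} = y_2^q$ exhibits a multiplicative dependence between them in the multiplicative group $\C(V)^*$ modulo constants; since $p, q$ are coprime, the multiset of relations for distinct $r$'s would be multiplicatively independent unless $x_2, y_2$ are themselves constant modulo roots of unity, again collapsing $\varpi_2(V)$ to a point. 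The upshot is that $V \subseteq V(x_2^{|p|} - y_2^q)$ can hold for at most one value of $r$ (given that $\dim_\C \varpi_i(V) > 0$ for some $i$), and similarly $V \subseteq V(x_2^{|p|} y_2^q - 1)$ can hold for at most one $r$.

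Here is the way I would actually organize it to avoid case-chasing. Write $u = x_2$, $v = y_2$ viewed in $\C(V)$. The containment $V \subseteq V(x_2^{|p|} - y_2^q)$ says $u^{|p|} v^{-q} = 1$ in $\C(V)^*$; the containment in the other hypersurface says $u^{|p|} v^{q} = 1$. In the finitely generated abelian group $\Gamma := \langle u, v\rangle \leq \C(V)^*$, the set of integer pairs $(m, n)$ with $u^m v^n = 1$ is the kernel of the map $\Z^2 \to \Gamma$, a subgroup $L \leq \Z^2$. If $L$ has rank $2$, then $\Gamma$ is finite, so $u$ and $v$ are both roots of unity, hence constant on $V$, so $\varpi_2(V)$ is a point — but then the hypothesis forces $\dim_\C \varpi_1(V) > 0$, and since $x_2, y_2$ are among the four coordinates while $\varpi_1$ records $(x_1, y_1)$, this is consistent, yet $V \cap \mathcal{H}(r)$ in this situation is $V \cap (\{(x_2,y_2) = (\zeta, \xi)\})$ for fixed constants, which is already a surface — so we must be more careful and in fact this degenerate case \emph{does not occur} because $\mathcal{H}(r)$ only constrains $(x_2, y_2)$; hmm. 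Let me restate cleanly: if $\dim_\C\varpi_2(V) > 0$ then $\Gamma$ is infinite so $L$ has rank $\leq 1$, and then $L$ contains at most one of the lines $\Z\cdot(|p|, -q)$ over all coprime $(p,q)$ — no two distinct primitive vectors lie in a rank-one sublattice — giving at most finitely many bad $r$ from each of the two families; if instead $\dim_\C \varpi_2(V) = 0$ but $\dim_\C \varpi_1(V) > 0$, then $x_2, y_2$ are constants $c_1, c_2$ on $V$, and $V \subseteq \mathcal{H}(r)$ iff $(c_1, c_2)$ satisfies the defining equation of one of the two hypersurfaces, which for fixed nonzero $c_1, c_2$ happens for only finitely many $r$ (a monomial equation in $r$), while if some $c_i = 0$ it happens for no $r$ at all since the defining polynomials of $\mathcal{H}(r)$ are not divisible by $x_2$ or $y_2$. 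In every case only finitely many $r$ are bad.

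The main obstacle is precisely this last bookkeeping: handling the degenerate case where $V$ lies over a point in the $(x_2, y_2)$-plane separately from the generic case, and in the generic case extracting ``a rank-one lattice contains at most one primitive vector up to sign'' to bound the number of bad $r$. Once that is in place, for every good $r$ neither hypersurface contains $V$, so $V \cap \mathcal{H}(r)$ has pure dimension at most one, i.e. is a finite union of curves (plus finitely many points, which are harmless or can be absorbed), completing the proof.
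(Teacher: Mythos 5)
Your core argument is sound and, once the packaging is stripped away, it is essentially the paper's: reduce to showing that $V\subseteq \mathcal{H}(r)$ can happen for only finitely many $r$, and rule out two distinct such $r$ using positivity of $\dim_\C\varpi_2(V)$. The paper does this geometrically -- two distinct $\mathcal{C}(r)$ meet in finitely many points, so $V\subseteq \mathcal{H}(r)\cap\mathcal{H}(r')$ forces $V\subseteq \C^2\times P$ and $\varpi_2(V)=P$ -- while you do it field-theoretically, via the rank of the lattice of relations $u^mv^n=1$ in $\C(V)^*$. The two are interchangeable; the paper's version is shorter and avoids your worry about whether $u,v$ are nonzero in $\C(V)$. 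One genuine loose end in your write-up: you should not leave "possibly some isolated points ... which are harmless or can be absorbed." A finite union of curves together with genuinely isolated points is not a finite union of curves. The correct resolution is the one the paper invokes: by the affine dimension theorem (Hartshorne I.7), every irreducible component of the intersection of a surface with a hypersurface in $\C^4$ has dimension at least $2+3-4=1$, so there are no isolated components at all.

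The branch you add for $\dim_\C\varpi_2(V)=0$ is both unnecessary and wrong. It is unnecessary because the hypothesis $\dim_\C\varpi_i(V)>0$ is imposed for both $i$ (that is how Lemma 6.4 verifies it before citing this lemma), so $x_2,y_2$ cannot both be constant on $V$. It is wrong because your two concrete claims there fail: if $(c_1,c_2)=(1,1)$ then $c_1^{|p|}c_2^{q}=1$ and $c_1^{|p|}=c_2^{q}$ hold for \emph{every} $r$ (indeed $(1,1)$ lies on every $\mathcal{C}(r)$, a fact the paper uses elsewhere), and if $c_1=c_2=0$ then $c_1^{|p|}-c_2^{q}=0$ for every $r$ with $pq\neq 0$, contrary to your assertion that $c_i=0$ gives no bad $r$. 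These examples show the lemma is genuinely false if the hypothesis is read as "for some $i$": take $V=\C^2\times\{(1,1)\}$. So the hypothesis on $\varpi_2$ is doing real work and cannot be traded away; delete that branch rather than trying to repair it. (A last cosmetic point: since $r$ and $-r$ give the same pair $(|p|,q)$, "at most one value of $r$" should be "at most two," which of course does not affect finiteness.)
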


\begin{proof}
The dimension of $V$ is two, and the dimension of $\mathcal{H}(r)$ is three.  Therefore,  each irreducible component of their  intersection  has dimension at least one (see \cite{MR0463157} Section I.7). As a result, the dimension of any irreducible component of  $V \cap \mathcal{H}(r)$ is one, or two. 

Assume there are infinitely many $r$ such that $V \cap \mathcal{H}(r)$ contains an irreducible component of dimension two. 
The set  $\mathcal{H}(r)$ is of the form $\C^2 \times \mathcal{C}(r)$, and in $\C^2(x_2,y_2)$  any two distinct $\mathcal{C}(r)$ intersect in  only finitely many points. We conclude that 
  $V\subset \C^2\times P$ for some point $P$.    Therefore, $\varpi_2(V)=P$  has dimension zero, which does not occur by hypothesis.  It follows that $V \cap \mathcal{H}(r)$ consists of a finite union of curves for all but finitely many $r$.

\end{proof}

\begin{lemma}\label{lemma:key1} Let $V$ be an irreducible surface in $\C^4(x_1,y_1,x_2,y_2)$ with $\dim_{\C}\varpi_i(V)>0$.  Assume that   $V\cap \mathcal{H}(r)$ is a finite union of curves and  $\varpi_2(V)$ is dense in $\C^2$. Then for any irreducible curve component $C$ of $V\cap \mathcal{H}(r)$
\[  \gamma (C)\sim 1\]
where the implied constant depends only on $V$.
\end{lemma}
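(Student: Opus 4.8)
The idea is to exhibit a rational function on $C$ of degree bounded independently of $r$, by pulling back the gonality-$1$ function on $\mathcal{C}(r)$ through $\varpi_2$. Recall from Lemma~\ref{lemma:keygonality} that each irreducible component of $\mathcal{C}(r)\subset\C^2(x_2,y_2)$ has gonality $1$; fix such a component $\mathcal{C}_0$ and a dominant rational map $h\colon\mathcal{C}_0\dashrightarrow\C$ of degree $1$. The map $\varpi_2$ restricted to $C$ lands in $\mathcal{C}(r)=\varpi_2(\mathcal{H}(r))$, hence (as $C$ is irreducible) in a single component $\mathcal{C}_0$. So we get a composition $h\circ(\varpi_2\mid_C)\colon C\dashrightarrow\C$, and by Lemma~\ref{lemma:abstractgonalitybounds} (or directly, since multiplicativity of degree gives $\gamma(C)\le \deg(\varpi_2\mid_C)\cdot\gamma(\mathcal{C}_0)=\deg(\varpi_2\mid_C)$), it suffices to bound $\deg(\varpi_2\mid_C)$ by a constant depending only on $V$.

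The main step is therefore: \emph{for every irreducible curve component $C$ of $V\cap\mathcal{H}(r)$ on which $\varpi_2$ is nonconstant, $\deg(\varpi_2\mid_C)$ is bounded by a constant depending only on $V$}. I would argue as follows. Since $\varpi_2(V)$ is dense in $\C^2$, the restriction $\varpi_2\mid_V\colon V\dashrightarrow\C^2$ is a dominant map of surfaces; let $D=\deg(\varpi_2\mid_V)$, which depends only on $V$. Now apply Proposition~\ref{prop:mapsarenice}(2) with $g=\varpi_2\mid_V$: for all but finitely many codimension-one subvarieties $W$ of $V$ we have $\deg(\varpi_2\mid_W)\le D$. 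The curves $C$ occurring inside $V\cap\mathcal{H}(r)$ are codimension-one subvarieties of $V$, so all but finitely many of them (across all $r$) satisfy $\deg(\varpi_2\mid_C)\le D$. For a component $C$ with $\varpi_2$ nonconstant, $\deg(\varpi_2\mid_C)$ makes sense as the degree of a dominant map $C\dashrightarrow\varpi_2(C)\subset\mathcal{C}_0$; if $\varpi_2(C)$ is a point, there is nothing to prove for that component since then the problem reduces to $\varpi_1$ by symmetry, or $C$ itself maps to a point and is handled trivially — but actually the cleaner route is to note $C\subset\mathcal{H}(r)$ has both $\varpi_1(C)$ and $\varpi_2(C)$ of positive dimension unless $C$ lies in a fiber, and the hypothesis $\dim\varpi_i(V)>0$ together with $C$ being a component of a complete intersection-type set lets us reduce to the generic case.

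Handling the finitely many exceptional components is the remaining issue. Here I would invoke that each such exceptional $C$ is still a curve of gonality $1$: it is birational to its image under $\varpi_1$ or under $\varpi_2$, and in either case that image is (dense in) an irreducible component of $\mathcal{C}(r)$ in some $\C^2$, which has gonality $1$ by Lemma~\ref{lemma:keygonality}; alternatively, $\varpi_i\mid_C$ being generically finite onto a rational (gonality-$1$) curve bounds $\gamma(C)$ by that degree. Since there are only finitely many exceptional components and each has gonality bounded (indeed the construction shows the bound is controlled by $D$ again, or at worst by a finite maximum absorbed into the constant), taking $c=D$ — or $c$ the maximum of $D$ and the finitely many exceptional gonalities, a quantity depending only on $V$ — gives $\gamma(C)\le c$ for every irreducible curve component $C$, i.e. $\gamma(C)\sim 1$. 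The lower bound $\gamma(C)\ge 1$ is automatic. The hard part, as usual with these arguments, is making sure the "finitely many exceptional codimension-one subvarieties" from Proposition~\ref{prop:mapsarenice}(2) genuinely is finite \emph{uniformly in $r$} — but this is immediate because that finite set depends only on the map $\varpi_2\mid_V$, hence only on $V$, not on which $\mathcal{H}(r)$ we intersect with.
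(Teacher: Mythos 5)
Your proposal is correct and follows essentially the same route as the paper: bound $\deg(\varpi_2\mid_C)$ by $\deg(\varpi_2\mid_V)$ via Proposition~\ref{prop:mapsarenice}, absorb the finitely many exceptional components (which are determined by $V$ alone, not by $r$) into the constant, and conclude with Lemma~\ref{lemma:keygonality} and Lemma~\ref{lemma:abstractgonalitybounds}. One aside is off --- an exceptional component $C$ need not be birational to its image, and $\varpi_1(C)$ lies in $\C^2(x_1,y_1)$ rather than in $\mathcal{C}(r)$ --- but your fallback (finitely many such curves, so their maximum gonality is a constant depending only on $V$) is exactly the paper's argument, so nothing is lost.
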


\begin{proof}

By Proposition~\ref{prop:mapsarenice}  the image $\varpi_2(V)$ consists of all of $\C^2(x_2,y_2)$ except perhaps a finite union of curves and points.  The image $ \varpi_2(\mathcal{H}(r))= \mathcal{C}(r)   \subset \C^2(x_2,y_2)$ and since two distinct $\mathcal{C}(r)$ intersect in only a finite set of points, we conclude that $\varpi_2(V \cap \mathcal{H}(r))$ maps onto a dense subset of $\mathcal{C}(r)$ for all but perhaps finitely many $r$.  Therefore, it suffices to show the bound for these $r$ where $\varpi_2(V \cap \mathcal{H}(r))$ is dense in $\mathcal{C}(r)$.

By Proposition~\ref{prop:mapsarenice},  the degree  $d=\deg(\varpi_2\!\! \mid_{V})$ is  finite, and for all but a finite number of points $\{P_m \}$ in the image of $\varpi_2\!\! \mid_{V}$,  the  number of pre-images of any point is bounded by $d$.  For some such $P_m$, the pre-image of $P_m$ is contained in a finite union of curves $\{C_{m,n}\}$ in $V$.   The set $V\cap \mathcal{H}(r)$ is a union of curves and each of these curves intersects a $C_{m,n}$ in   a finite collection of points unless $C_{m,n}$  coincides with a curve component of $V \cap \mathcal{H}(r)$.  
 
 Let $D $ be an irreducible curve component of  $V \cap \mathcal{H}(r)$.
 If $C_{m,n}$ coincides with  $D$ then as there are only finitely many  $C_{m,n}$ the maximal gonality of any such $D$ is bounded by a constant depending only on $V$.  If $C_{m,n}$ does not coincide with $D$ then the degree of $\varpi_2\!\!\mid_{D}$ is bounded by $d$,   which is finite and independent of $r$.      By Lemma~\ref{lemma:abstractgonalitybounds} we conclude that  $\gamma(D) \sim \gamma(\mathcal{C}(r))$. By Lemma~\ref{lemma:keygonality},  $\gamma(\mathcal{C}(r)) = 1$, which concludes the proof.

\end{proof}

\begin{lemma}\label{lemma:key2}Let $V$ be an irreducible surface in $\C^4(x_1,y_1,x_2,y_2)$ with $\dim_{\C}\varpi_i(V)>0$.  Assume that   $V \cap \mathcal{H}(r)$ is a finite union of curves and  $\varpi_2(V)$ is a curve. Then for any irreducible curve component $C$ of $V \cap \mathcal{H}(r)$
 \[ \gamma(C)\sim 1\]
 where the implied constant depends only on $V$.
\end{lemma}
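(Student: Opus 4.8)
The plan is to reduce Lemma~\ref{lemma:key2} to the situation already handled in Lemma~\ref{lemma:key1}, the difference being only that here $\varpi_2(V)$ is a curve rather than all of $\C^2$. So the image $\varpi_2(V)$ is some irreducible curve $B\subset \C^2(x_2,y_2)$, and the curve components of $V\cap\mathcal{H}(r)$ project into $B\cap\mathcal{C}(r)$, which is a finite set of points for all but finitely many $r$ (two distinct curves in the plane meet in finitely many points, unless $B$ is a component of some $\mathcal{C}(r)$ — and by Lemma~\ref{lemma:keygonality}-type reasoning or by discarding finitely many $r$, this degenerate case contributes only boundedly many $r$, each giving curves whose gonality is bounded in terms of $V$). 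So the approach splits according to whether $\varpi_2$ restricted to a curve component $C$ of $V\cap\mathcal{H}(r)$ is constant (i.e.\ $C$ maps to a point) or $\varpi_2(C)=B$.

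First I would handle the generic case. Fix $r$ outside the finite bad set, so that $\varpi_2(V\cap\mathcal{H}(r))$ is a finite set of points $\{Q_1,\dots,Q_k\}\subset B\cap\mathcal{C}(r)$ with $k$ bounded by $\deg(B)\cdot\deg(\mathcal{C}(r))\sim 1$. Each curve component $C$ of $V\cap\mathcal{H}(r)$ then lies inside $\varpi_2^{-1}(Q_j)\cap V$ for some $j$. Now I invoke the hypothesis $\dim_\C\varpi_1(V)>0$: the projection $\varpi_1$ restricted to $V$ is not constant, and I claim that on each fiber $\varpi_2^{-1}(Q_j)\cap V$ — which is a union of curves in the $2$-plane $\{x_2=a_j,\ y_2=b_j\}\subset\C^4$, i.e.\ a curve in $\C^2(x_1,y_1)$ — the coordinate function $x_1$ (or $y_1$, whichever is non-constant on that component) gives a dominant rational map to $\C$ of degree bounded purely in terms of $V$. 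Concretely: $\varpi_1|_V:V\dashrightarrow\C^2(x_1,y_1)$ has finite degree $d'$ by Proposition~\ref{prop:mapsarenice}(1), and by parts (2)–(3) of that proposition, for all but finitely many fibers $\varpi_2^{-1}(Q)\cap V$ the restriction $\varpi_1|_{\varpi_2^{-1}(Q)\cap V}$ has degree at most $d'$; composing with a generic linear projection $\C^2(x_1,y_1)\to\C$ gives a degree-$\le d'$ (times a constant) rational map from $C$ to $\C$, hence $\gamma(C)\le d'\cdot(\text{const})\sim 1$. The finitely many exceptional fibers, and the finitely many bad $r$, each contribute only finitely many curves $C$, whose gonalities are then individually finite (gonality is always finite, as noted after Noether normalization) and hence bounded by a constant depending only on $V$.

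Second, the case where $\varpi_2(C)=B$ for a curve component $C$: then $B\subset\mathcal{C}(r)$, so $B$ is (contained in) an irreducible component of $\mathcal{C}(r)$, which by Lemma~\ref{lemma:keygonality} has gonality $1$; this forces $B$ to be one of finitely many curves (it is independent of $r$ since $B=\varpi_2(V)$ is), hence can happen for only finitely many $r$, and $\varpi_2|_C:C\dashrightarrow B$ has finite degree $\le\deg(\varpi_2|_V)\sim 1$ by Proposition~\ref{prop:mapsarenice}, so Lemma~\ref{lemma:abstractgonalitybounds} gives $\gamma(C)\le\deg(\varpi_2|_C)\cdot\gamma(B)\sim 1$.

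The main obstacle I anticipate is bookkeeping the exceptional sets uniformly in $r$: I need the constants coming from Proposition~\ref{prop:mapsarenice} (the degrees $\deg\varpi_1|_V$, $\deg\varpi_2|_V$, the number of exceptional fibers, the number $k$ of image points) to depend only on $V$ and not on $r$, which is automatic since $V$ is fixed, but I must be careful that the "finitely many exceptional fibers" of $\varpi_1|_V$ could in principle be exactly the fibers over the points $Q_j(r)$ that arise for infinitely many $r$ — this is why the argument must allow those exceptional fibers to be absorbed into the "finitely many bad $r$" bound, using that each such fiber is a fixed finite union of curves in $V$ whose (finite) gonalities are bounded independent of $r$. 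Once that accounting is set up, the proof is a routine application of Proposition~\ref{prop:mapsarenice}, Lemma~\ref{lemma:keygonality}, and Lemma~\ref{lemma:abstractgonalitybounds}.
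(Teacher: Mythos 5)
Your overall reduction matches the paper's: discard the finitely many $r$ for which a component of $\mathcal{C}(r)$ lies in the curve $\varpi_2(V)$, and observe that for the remaining $r$ every curve component of $V\cap\mathcal{H}(r)$ has the form $D_r\times P_r$ with $P_r=(a_r,b_r)\in\C^2(x_2,y_2)$ a point and $D_r\subset\C^2(x_1,y_1)$ a plane curve. The gap is in how you bound $\gamma(D_r)$. You claim that since $\varpi_1|_V$ has finite degree $d'$, its restriction to all but finitely many fibers $\varpi_2^{-1}(Q)\cap V$ has degree at most $d'$, and that composing with a generic linear projection $\C^2(x_1,y_1)\to\C$ then gives a map of degree about $d'$ on $C$. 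This does not work: $\varpi_1$ restricted to a fiber of $\varpi_2$ is injective (it merely forgets the constant coordinates $(a_r,b_r)$), so its degree is $1$, and the estimate $\deg(\varpi_1|_W)\le d'$ from Proposition~\ref{prop:mapsarenice} carries no information about the image curve. The degree of a generic linear projection restricted to $D_r$ equals the degree of $D_r$ as a plane curve, which is precisely the quantity that must be controlled and is not controlled by $d'$. Concretely, if $V=\{(x_1,y_1,x_2,y_2):\ y_2=0,\ x_2=f(x_1,y_1)\}$ with $f$ generic of degree $N$, then $\varpi_1|_V$ is injective, so $d'=1$, while the $\varpi_2$-fibers are smooth plane curves of degree $N$ with gonality $N-1$; so no bound of the form $\gamma(C)\le d'\cdot(\text{const})$ with the constant coming from the degrees of the projections can be extracted this way.

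The correct mechanism, and the one the paper uses, is that $D_r\times P_r$ is cut out by the specializations $\varphi_i(x_1,y_1,a_r,b_r)$ of a fixed generating set $\{\varphi_i\}$ of $\mathfrak{U}_V$; hence $\deg D_r$ is bounded by the $(x_1,y_1)$-degrees of the $\varphi_i$, independently of $r$. The genus--degree formula then bounds $g(D_r)$ by $\tfrac12(d-1)(d-2)$ and Brill--Noether bounds $\gamma(D_r)$ by $\lfloor (g+3)/2\rfloor$, giving a constant depending only on $V$. (A smaller issue: in your exceptional case you invoke $\deg(\varpi_2|_C)\le\deg(\varpi_2|_V)$, but $\varpi_2|_V$ maps a surface onto a curve and has no finite degree in the sense of Proposition~\ref{prop:mapsarenice}; that case is harmless only because it occurs for finitely many $r$, which is how the paper dispatches it.)
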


\begin{proof}
The projection $\varpi_2(V \cap \mathcal{H}(r))$ is a finite collection of points unless a curve component of $\mathcal{C}(r)$ is contained in the closure of  $ \varpi_2(V)\subset \C^2(x_2,y_2)$.  This can only occur for finitely many $r$ as $\varpi_2(V)$ is a  curve. For all $r$ in this finite set, $\gamma(V \cap \mathcal{H}(r))$ is  bounded. 

 It now suffices to consider those $r$ such that $\varpi_2(V \cap \mathcal{H}(r))$  is a finite collection of points. 
For these $r$,   any curve  in  $V \cap \mathcal{H}(r)$ must  be of the form $D\times P$ where $D \subset \C^2(x_1,y_1)$ is a curve and $P\subset \C^2(x_2,y_2)$ is a point.
(If $(1,1)$ is in $\varpi_2(V)$ then it is possible that  $P$ are the same for all $r$ since $(1,1)$ is contained in each $D$.)

It suffices to universally bound the gonality of  one such $D_r\times P_r$ as there are only finitely many irreducible components of each $V \cap \mathcal{H}(r).$   Call such a point  $P_r=(a_r,b_r)$.   We use the notation established in \S~\ref{section:preliminaries}.  Let $\{\varphi_i\}$   be a generating set for $\mathfrak{U}_{V}$ where $\varphi_i=\varphi_i(x_1,y_1,x_2,y_2)$.  A generating set for $\mathfrak{U}_{D_r\times P_r}$ is contained in $\{ \varphi_{r,i}^* \}$ where $\varphi_{r,i}^*= \varphi_i(x_1,y_1,a_r,b_r)$.      The degrees of $\varphi_i\!\!\mid_{x_1}$ and $\varphi_i\!\!\mid_{y_1}$ provide an upper bound for the degrees of $\varphi_{r,i}^*$. These upper bounds are independent of $r$.  It follows that  the degree  $d_r$ of $D_r\times P_r$  is bounded by some $d$ which is independent of $r$.

The genus of $D_r\times P_r$ equals the genus of $D_r$ as they are isomorphic. Moreover, the degree of $D_r$ equals the degree of $D_r\times P_r$ as seen by the form of the $\varphi_{r,i}^*$.  Therefore, we identify $D_r\times P_r$ with the plane curve $D_r \subset \C^2(x_1,y_1)$.  By the genus degree formula the genus $g_r$ of $D_r$ satisfies 
\[ g_r \leq \tfrac12 (d_r-1)(d_r-2). \]
(There is equality if the plane curve is non-singular.) The Brill-Noether bound states that for $\gamma_r=\gamma(D_r),$ 
\[ \gamma_r \leq \lfloor \frac{g_r+3}2 \rfloor. \]
Therefore 
\[ \gamma_r \leq \lfloor \frac{g_r+3}2 \rfloor \leq \frac14(d_r-1)(d_r-2)+2.\]
The lemma follows as $d_r\leq d$  and $d$ is independent of $r$.
\end{proof}

\section{Dehn Filling and Character Varieties}

\subsection{} Assume that $M$ is a finite volume hyperbolic 3-manifold
whose boundary consists of exactly $k$ torus cusps. Fix a framing for
each cusp; let $T_i$ be the $i^{th}$ connected component of $\partial
M$ and let $\mu_i$ and $\lambda_i$ be an oriented choice of meridian
and longitude for $T_i$.  For $r_i=p_i/q_i \in \Q \cup \{1/0\}$,
$i=1,\dots, k$, written in lowest terms (with $q$ non-negative), let
$M(r_1,\dots, r_k)$ denote the manifold obtained by performing $r_i$
Dehn filling on the $i^{th}$ cusp of $M$.  We write $r_i=-$ to
indicate that the $i^{th}$ cusp remains complete and is not filled. By
Thurston's Hyperbolic Dehn Surgery theorem, for all but finitely many
values of $r$ in $\Q\cup \{1/0\}$ the manifold obtained by $r$ filling
of one cusp of $M$ is hyperbolic.

We fix a finite  presentation for $\Gamma \cong \pi_1(M)$, with $M_i=[\mu_i]$,  $L_i=[\lambda_i]$ amongst the generators for $i=1,\dots,k$.
If $\Gamma$ has presentation 
\[ \Gamma = \langle \gamma_1,\dots, \gamma_n : \varrho_1, \dots \varrho_l \rangle\]
then by van Kampen's theorem, a presentation for \[ \Gamma(r_1,\dots,r_k) \cong  \pi_1(M(r_1,\dots, r_k))\] is given by 
\[ \Gamma(r_1, \dots, r_k) = \langle \gamma_1, \dots, \gamma_n : \varrho_1, \dots \varrho_l, w_1,\dots, w_k \rangle\]
where $w_i=M_i^{p}L_i^q$ unless $r=p/q=-$ in which case it is empty.

We state the next lemma for convenience.  The proof is clear.

\begin{lemma}\label{lemma:homologyfillingmap} Let $M$ be a finite volume two-cusped hyperbolic 3-manifold.  Then 
\[ |H^1(M(-,r); \Z/2\Z)| \leq |H^1(M;\Z/2\Z)| \] for all $r\in \Q \cup \{1/0\}$.
\end{lemma}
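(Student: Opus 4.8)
The plan is to deduce the inequality directly from the long exact sequence (or from elementary properties) of homology with $\Z/2\Z$ coefficients under Dehn filling, using the fact that filling a cusp only adds a relation to $\pi_1$ and hence can only impose extra relations on $H_1$. First I would pass to first homology via the universal coefficient theorem: since $\Z/2\Z$ is a field, $H^1(N;\Z/2\Z)\cong\operatorname{Hom}(H_1(N;\Z),\Z/2\Z)\cong H_1(N;\Z)\otimes\Z/2\Z$ for any space $N$ of finite type, so it is equivalent to prove $\dim_{\Z/2\Z}H_1(M(-,r);\Z/2\Z)\le\dim_{\Z/2\Z}H_1(M;\Z/2\Z)$.

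Next I would use the presentation of $\Gamma(-,r)$ recorded just above the lemma statement: $\Gamma(-,r)$ is obtained from $\Gamma=\pi_1(M)$ by adjoining the single relator $w=M_2^{p}L_2^{q}$. Abelianizing and tensoring with $\Z/2\Z$, the group $H_1(M(-,r);\Z/2\Z)$ is the quotient of the $\Z/2\Z$-vector space $H_1(M;\Z/2\Z)$ by the subspace spanned by the image of $w$, namely by the vector $p[\mu_2]+q[\lambda_2]\bmod 2$. A quotient of a vector space by a (possibly zero) subspace has dimension at most that of the original space, so $\dim_{\Z/2\Z}H_1(M(-,r);\Z/2\Z)\le\dim_{\Z/2\Z}H_1(M;\Z/2\Z)$, and re-dualizing gives $|H^1(M(-,r);\Z/2\Z)|\le|H^1(M;\Z/2\Z)|$ as desired. (One can equally phrase this via the surjection $\pi_1(M)\twoheadrightarrow\pi_1(M(-,r))$ induced by inclusion, which induces a surjection on $H_1(\,\cdot\,;\Z/2\Z)$.)

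There is essentially no obstacle here; the only thing to be slightly careful about is the bookkeeping for the cases $r=0$ and $r=\infty$, where the added relator is $L_2$ or $M_2$ respectively rather than $M_2^{p}L_2^{q}$ — but this does not affect the argument since in every case we are still quotienting $H_1(M;\Z/2\Z)$ by at most a one-dimensional subspace. This is why the authors remark that "the proof is clear"; I would simply spell out the abelianization-and-reduce-mod-2 step and the observation that surjections cannot increase $\Z/2\Z$-dimension.
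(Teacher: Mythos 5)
Your proof is correct and is evidently the argument the authors had in mind when they wrote ``the proof is clear'': the epimorphism $\pi_1(M)\twoheadrightarrow\pi_1(M(-,r))$ obtained by adjoining the single relator $M_2^{p}L_2^{q}$ (or $M_2$, or $L_2$, in the degenerate slopes) induces a surjection on $H_1(\,\cdot\,;\Z/2\Z)$, and dualizing over the field $\Z/2\Z$ gives $|H^1(M(-,r);\Z/2\Z)|\leq |H^1(M;\Z/2\Z)|$. Since the paper supplies no written proof, there is nothing to contrast; your write-up simply makes the omitted argument explicit.
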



\subsection{} 

We will now define various varieties associated to the fundamental
group of a 3-manifold $M$ (see 
\cite{MR2104008}.)  For a finitely generated group $\Gamma$, and a homomorphism $\rho:\Gamma  \rightarrow
\SL_2(\C),$
the
set \[ \text{Hom}(\Gamma, \SL_2(\C)) =\{ \rho:\Gamma \rightarrow
\SL_2(\C) \}\] naturally carries the structure of an affine algebraic
set defined over $\C$.  This is called the {\em representation
  variety}, $R(\Gamma)$.  Given an element $\gamma \in \Gamma$ we
define the trace function $I_{\gamma}:R(\Gamma) \rightarrow \C$ as
$I_{\gamma}(\rho) = \tr(\rho(\gamma))$.  Similarly, given a
representation $\rho \in R(\Gamma)$ we define the character of $\rho$
to be the function $\chi_{\rho}:\Gamma \rightarrow \C$ given by
$\chi_{\rho}(\gamma) = \tr(\rho(\gamma))$.  The {\em
  $\SL_2(\C)$-character variety} of $\Gamma$ is the set of all
characters, \[X(\Gamma) = \{\chi_{\rho}: \rho\in R(\Gamma)\}.\] Such a
set is called a {\em natural model} for the character variety, as
opposed to a set isomorphic to one obtained in this manner. The functions 
$t_{\gamma}:R(\Gamma) \rightarrow X(\Gamma)$ defined by
$t_{\gamma}(\rho)=\chi_{\rho}(\gamma)$ are regular maps.  The
coordinate ring of $X(\Gamma)$ is $T\otimes \C$ where $T$ is the
subring of all regular functions from $R(\Gamma) $ to $\C$ generated
by 1 and the $t_{\gamma}$ functions.

If $\Gamma$ is isomorphic to $\Gamma'$, then the sets $R(\Gamma)$ and $R(\Gamma')$ are isomorphic, as are $X(\Gamma)$ and $X(\Gamma')$.  We write $R(M)$ or $X(M)$ to denote a specific $R(\pi_1(M))$ or $X(\pi_1(M))$, respectively,  which is well-defined up to isomorphism.   The inclusion homomorphism $i: \pi_1(\partial M) \rightarrow \pi_1(M)$ induces a map $i^*:X(M)\rightarrow X(\pi_1(\partial M))$ by $\chi_{\rho} \mapsto \chi_{\rho \mid_{\pi_1(\partial M)}}.$  We write $A(M)$ to denote the closure of the image of this map.

The representation variety $R(M)$ naturally sits in $\C^{4n}(g_{11},g_{12},g_{13},g_{14}, \dots, g_{n4})$ where $g_{i1}, g_{i2}$, $g_{i3}$ and $g_{i4}$ are the four entries of the $2\times 2$ matrix $\rho(\gamma_i)$.   Fix a generating set for $\pi_1(\partial M)$, $\{ M_1, L_1, \dots, M_k,L_k\}$ where each pair $\{M_i,L_i\}$ generates $\pi_1(T_i)$, the fundamental group a connected component of $\partial M$.   Let $m_i^{\pm 1}$ be the eigenvalues of $\rho(M_i)$ and $l_i^{\pm 1}$ be the eigenvalues of $\rho(L_i)$.

Define the following functions on $\C(g_{11}, \dots, g_{n4}, m_1^{\pm 1}, l_1^{\pm 1}, \dots, m_k^{\pm 1} ,l_k^{\pm 1} )$
\[ I_{M_i} - (m_i+m_i^{-1}), \quad I_{L_i}-(l_i+l_i^{-1}), \quad
I_{M_iL_i} -(m_il_i+m_i^{-1}l_i^{-1}).\] The extended representation
variety, $R_E(M)$ is the algebraic set in $\C^{4n} \times(\C^*)^{2k}$
cut out by $\mathfrak{U}_{R(M)}$ and the equations above for $i=1,
\dots, k$. The extended character variety $X_E(M)$ is defined
similarly.  The {\em eigenvalue variety}, $E(M)$, is the closure of
the image of $R_E(M)$ under the projection to the coordinates $m_1,
l_1, \dots, m_k, l_k$.  It naturally sits in $(\C^*)^{2k}(m_1^{\pm 1},
l_1^{\pm1}, \dots, m_k^{\pm 1}, l_k^{\pm 1})$.   We have the following commuting
diagram where the maps $p_1$, $p_2$, and $p_3$ are all finite to one of
the same degree.

\hspace{3cm}
\xymatrix{ R_E(M) \ar[r]^{t_E} \ar[d]_{p_1} & X_E(M) \ar[r]^{i_E^*} \ar[d]_{p_2} & E(M) \ar[d]_{p_3} \\
R(M) \ar[r]_{t} & X(M) \ar[r]_{i^*} & A(M)  }

An irreducible component of $R(M)$ containing a discrete and faithful representation is denoted $R_0(M)$, and the image in $X(M)$ is denoted $X_0(M)$ and called a {\em canonical component}. For a given $M$ there can be more than one canonical component, as there are multiple lifts of a discrete faithful representation to $\SL_2(\C)$.  We call an irreducible component of $A(M)$ a canonical component and write $A_0(M)$ if it is the image of a canonical component in $X(M)$. This definition also extends to the extended varieties.

We can define the $\PSL_2(\C)$ representation and character varieties in a similar way. (See \cite{MR1670053}.)  We denote the $\PSL_2(\C)$ character variety  of $M$ by $Y(M)$ and a canonical component by $Y_0(M)$.  We can form the restriction variety $B(M)$ analogous to $A(M)$ as above (and also form $B_0(M)$  when appropriate).  
The group $\PSL_2(\C)$ is isomorphic to $\rm{Isom}^+(\H^3)$, the group of orientation preserving isometries of hyperbolic half space.  
Since the index $[\rm{Isom}(\H^3):\rm{Isom}^+(\H^3)]=2$, for $M$ a finite volume hyperbolic
3-manifold with at least one cusp there are two $\PSL_2(\C)$ characters
corresponding to discrete faithful representations of $\pi_1(M)$. These correspond to different orientations of $M$.  The authors know of no examples where these two characters lie on different irreducible components of $Y(M)$.
 The coordinate ring of the quotient is $T_e\otimes \C$ where $T_e$ is the subring of $T$ consisting of all elements invariant under the action of $\mu_2$.   
Culler  \cite{MR825087}  (Corollary 2.3) showed that a faithful representation of any discrete torsion-free subgroup to $\PSL_2(\C)$ lifts to $\SL_2(\C)$, and moreover (Theorem 4.1) that a representation $\rho:\Gamma \rightarrow \PSL_2(\C)$ lifts to a representation to $\SL_2(\C)$ if and only if every representation in the path component containing $\rho$ lifts.

  By the proof of Thurston's Dehn Surgery Theorem
(\cite{Thurston} Theorem 5.8.2), any neighborhood of the character of
a discrete faithful representation in $Y(M)$ contains all but finitely
many Dehn filling characters.  Since a discrete faithful character is always a simple point of
$Y(M)$ \cite{MR1396960} it follows $Y_0(M(-,r))$ is contained in some $ Y_0(M)$ for all but
finitely many $r$.  Therefore, for all but finitely many $r$,
$X_0(M(-,r))$ is contained in some canonical component of $X(M)$. (Due
to lifting restrictions it is not necessarily true that all but
finitely many $X_0(M(-,r))$ are contained in a given $X_0(M)$.)

In the light of Lemma~\ref{lemma:abstractgonalitybounds} we now
establish bounds on the degrees of maps between the varieties of
interest.

\begin{lemma}\label{lemma:degrees} Let $M$ be a finite volume hyperbolic 3-manifold with exactly two cusps. Assume that $M(-,r)$ is hyperbolic. 
\begin{enumerate}

\item\label{lemma:degrees2} $Y_0(M(-,r) \rightarrow B_0(M(-,r))$ has degree 1.  
\item\label{lemma:degrees3} $X_0(M(-,r)) \rightarrow A_0(M(-,r))$ has degree at most   $\tfrac12|H^1(M;\Z/2\Z)|$.
\item\label{lemma:degrees4} $E_0(M(-,r)) \rightarrow A_0(M(-,r))$ has degree  4. 
\item\label{lemma:degrees5} $X_0(M(-,r)) \rightarrow Y_0(M(-,r))$ has degree at most $|H^1(M;\Z/2\Z) |$.   
\end{enumerate}
These mappings are all dense. 
\end{lemma}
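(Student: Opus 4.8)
The plan is to analyze each of the four maps in Lemma~\ref{lemma:degrees} separately, since they rely on quite different mechanisms, and then to note at the end that all of them are dense because each is either a restriction of a finite surjection (the $p_i$ in the commuting diagram) or is itself visibly dominant onto a component defined as the closure of an image.

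\medskip

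\noindent\textbf{Parts~\ref{lemma:degrees2} and~\ref{lemma:degrees5}: the $\PSL_2$--$\SL_2$ comparison.} First I would recall Culler's lifting results quoted in the excerpt: a $\PSL_2(\C)$ representation lifts to $\SL_2(\C)$ iff every representation in its path component lifts, and a discrete faithful representation of a torsion-free discrete group does lift. The obstruction to lifting a representation $\rho\colon\Gamma(-,r)\to\PSL_2(\C)$ lies in $H^2(M(-,r);\Z/2\Z)$, and the set of lifts of a liftable representation is a torsor over $H^1(M(-,r);\Z/2\Z)$. So the generic fiber of $X_0(M(-,r))\to Y_0(M(-,r))$ has size $|H^1(M(-,r);\Z/2\Z)|$ (over the locus of characters coming from liftable representations, which is all of $Y_0$ since the discrete faithful one lifts and lifting is a path-component condition), and then Lemma~\ref{lemma:homologyfillingmap} gives the bound $|H^1(M;\Z/2\Z)|$; this is part~\ref{lemma:degrees5}. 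For part~\ref{lemma:degrees2}, the key point is that $Y_0(M(-,r))\to B_0(M(-,r))$ is injective on the relevant dense set: a character in $Y_0$ is the character of an irreducible representation (the discrete faithful one is irreducible, hence so is the generic point of the component), and an irreducible $\PSL_2(\C)$ representation of a one-cusped hyperbolic 3-manifold group is determined up to conjugacy by its restriction to the peripheral subgroup — this is where one invokes the fact that for such manifolds the peripheral restriction already "sees" the whole representation on a canonical component, i.e. the map $Y_0\to B_0$ is birational. So the degree is $1$.

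\medskip

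\noindent\textbf{Parts~\ref{lemma:degrees3} and~\ref{lemma:degrees4}: counting via the commuting diagram.} Here I would use the commuting square relating $X_E, X, E, A$ together with the maps $p_1,p_2,p_3$. The map $E(M)\to A(M)$ records $(m_1,l_1,m_2,l_2)\mapsto (m_1+m_1^{-1},\,l_1+l_1^{-1},\,m_1l_1+m_1^{-1}l_1^{-1},\,\dots)$ on each cusp; in the filled manifold $M(-,r)$ only one cusp is left, so $E_0\to A_0$ is $(m,l)\mapsto (m+m^{-1}, l+l^{-1}, ml+m^{-1}l^{-1})$, and the generic fiber is the orbit of $(m,l)$ under $(m,l)\mapsto(m^{-1},l^{-1})$ together with the ambiguity in choosing which eigenvalue is $m$ versus $m^{-1}$ — working this out, the fiber generically has $4$ points (the group acting is $(\Z/2)^2$ and acts freely generically), giving part~\ref{lemma:degrees4}. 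For part~\ref{lemma:degrees3}, the fiber of $X_0(M(-,r))\to A_0(M(-,r))$ over a generic character $\chi\in A_0$ consists of characters of $X_0$ restricting to $\chi$; by the same lifting/torsor analysis that controls $X\to Y$ together with the $\PSL_2$ statement $Y_0\to B_0$ being degree one, and the fact that $A_0\to B_0$ is the quotient by the $\mu_2$-action with generic fiber of size $2$, one gets that the degree of $X_0\to A_0$ is $\tfrac12$ times the degree of $X_0\to Y_0$, hence at most $\tfrac12|H^1(M;\Z/2\Z)|$. Concretely: $\deg(X_0\to A_0)=\deg(X_0\to Y_0)\cdot\deg(Y_0\to B_0)/\deg(A_0\to B_0)=\deg(X_0\to Y_0)\cdot 1/2$.

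\medskip

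\noindent\textbf{Density and the main obstacle.} Each map is dense: $X_0\to A_0$, $E_0\to A_0$, $X_0\to Y_0$, $Y_0\to B_0$ are all, by definition of the canonical components $A_0, B_0$ (closures of images) and by the compatibility of "canonical" with the finite maps $p_i$ in the diagram, dominant onto their targets; one can also see this abstractly from Lemma~\ref{lemma:irreducible} since the source is irreducible. The step I expect to be the main obstacle is part~\ref{lemma:degrees2}, i.e. establishing that $Y_0(M(-,r))\to B_0(M(-,r))$ really is birational rather than merely finite: this requires knowing that on a canonical component an irreducible $\PSL_2(\C)$-representation of a one-cusped hyperbolic 3-manifold is reconstructible from its peripheral restriction, which is a genuine input (it ultimately rests on local rigidity / the fact that the discrete faithful character is a smooth point and the peripheral holonomy is a local coordinate system there, cf. the references to \cite{MR1396960} and Thurston). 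Once that is in hand, the bookkeeping with $|H^1|$ and the degree-multiplicativity around the commuting diagram is routine, modulo checking that the various generic fibers are where the stated group actions act freely.
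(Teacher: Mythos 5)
Your overall architecture matches the paper's: the paper disposes of parts (\ref{lemma:degrees2}) and (\ref{lemma:degrees3}) by citing Dunfield \cite{MR1695208} (Theorem 3.1 and Corollary 3.2), proves (\ref{lemma:degrees5}) exactly as you do via the count of lifts by $|H^1(M(-,r);\Z/2\Z)|$ followed by Lemma~\ref{lemma:homologyfillingmap}, proves (\ref{lemma:degrees4}) by the same eigenvalue-inversion observation, and gets density from the maps being bounded-degree maps of curves. Your derivation of (\ref{lemma:degrees3}) from (\ref{lemma:degrees2}) and (\ref{lemma:degrees5}) by degree multiplicativity around the commuting square is essentially how Dunfield's Corollary 3.2 follows from his Theorem 3.1; but note that the factor $\deg(A_0\to B_0)=2$, which you assert as ``the quotient by the $\mu_2$-action with generic fiber of size $2$,'' is exactly where the content lies: the fiber is the orbit of the image of $H^1(M(-,r);\Z/2\Z)\to H^1(\partial M(-,r);\Z/2\Z)$, and one needs the half-lives-half-dies theorem to know that this image is exactly one-dimensional (hence of size $2$, not $1$ or $4$). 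Also, in (\ref{lemma:degrees4}) your own description of the target (which records $ml+m^{-1}l^{-1}$ as well as $m+m^{-1}$ and $l+l^{-1}$) would make the generic fiber the orbit of $(m,l)\mapsto(m^{-1},l^{-1})$ alone, i.e.\ of size $2$, not $4$; either count suffices for the upper bound the lemma is used for, but your stated fiber and your stated degree are inconsistent with each other.

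The genuine gap is in part (\ref{lemma:degrees2}). You correctly identify it as the nontrivial input, but the mechanism you propose --- that the discrete faithful character is a smooth point at which the peripheral holonomy gives local coordinates, so the restriction ``sees'' the representation --- only yields that $Y_0(M(-,r))\to B_0(M(-,r))$ is injective \emph{near one point}, and for a nonconstant map of irreducible curves local injectivity at a point does not imply generic degree one (consider $z\mapsto z^2$ near $z=1$). There is also no general statement that an irreducible $\PSL_2(\C)$-representation of a one-cusped hyperbolic manifold group is determined up to conjugacy by its peripheral restriction; that fails off the canonical component. Dunfield's Theorem 3.1 is a global statement about the canonical component, and his proof goes through the volume function on $Y_0$ (a Gromov-norm/volume-rigidity argument showing that two characters of $Y_0(M(-,r))$ with the same peripheral restriction have equal volume, and that near-maximal-volume representations are determined by their peripheral data), not through the local structure at the discrete faithful character. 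So this step should be quoted as a theorem rather than rederived from local rigidity.
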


\begin{proof}

  Part (\ref{lemma:degrees2}) follows directly from Dunfield
  \cite{MR1695208} (Theorem 3.1), who showed that the mapping
  $i^*:Y_0(M(-,r)) \rightarrow B_0(M(-,r))$ is a birational map onto
  its image.  Dunfield \cite{MR1695208} (Corollary 3.2), also proved
  that the degree of the map $i^*:X_0(M(-,r)) \rightarrow A_0(M(-,r))$
  is at most $\tfrac12 |H^1(M(-,r);\Z/2\Z)|$.  By
  Lemma~\ref{lemma:homologyfillingmap} this is bounded by
  $\tfrac12 |H^1(M;\Z/2\Z) |$.  This establishes part (\ref{lemma:degrees3}).
  The mapping from $E_0(M(-,r))$ to $X_0(M(-,r))$ is given by the
  action of inverting both entries of the pair $(m_1,l_1)$ to
  $(m_1^{-1}, l_1^{-1})$.  This has degree at most four, giving part
  (\ref{lemma:degrees4}).  
  Part (\ref{lemma:degrees5}) follows
  from the observation that the number of lifts of a representation
  $\rho:\Gamma \rightarrow \PSL_2(\C)$ to $\SL_2(\C)$ is
  $|H^1(\Gamma;\Z/2\Z)|$.  Therefore the degree of the map is at most
  $ |H^1(M(-,r); \Z/2\Z)|$ which is bounded above by $|H^1(M;\Z/2\Z)|$
  by Lemma~\ref{lemma:homologyfillingmap}. 
  since the maps commute.

The mappings must all have dense image in the target space 
since all maps considered are maps between curves and the image must have dimension one as the degrees of the maps are bounded. 
\end{proof}

Given Lemma \ref{lemma:degrees}, and Definition~\ref{definition:sim} we record the
following for convenience.

\begin{prop}\label{prop:relatedvarieties}
Let $M$ be a finite volume hyperbolic 3-manifold with exactly two cusps.  Assume that $M(-,r)$ is hyperbolic. Define the set 
\[ \mathfrak{S}= \{ A_0(M(-,r)),   B_0(M(-,r)), E_0(M(-,r)), X_0(M(-,r)), Y_0(M(-,r))  \}.\]
For any $V, W \in \mathfrak{S}$, $\gamma(V) \sim \gamma(W)$.
\end{prop}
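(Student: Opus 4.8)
The plan is to observe that $\mathfrak{S}$ consists of five curves which are all linked by the dense rational maps appearing in Lemma~\ref{lemma:degrees}, together with the composite maps obtained from the commuting diagram preceding Lemma~\ref{lemma:degrees}. First I would record that for any two members $V,W$ of $\mathfrak{S}$, there is a dense rational map from one to the other of degree bounded by a constant $d$ depending only on $M$. Indeed, the diagram gives dense maps $E_0 \to A_0$, $X_0 \to A_0$, $X_0 \to Y_0$, $Y_0 \to B_0$, and Lemma~\ref{lemma:degrees} bounds each of their degrees by a constant depending only on $|H^1(M;\Z/2\Z)|$ (which is finite and independent of $r$); one obtains the remaining comparisons by composing these maps and their birational inverses where available (e.g.\ $Y_0 \to B_0$ is birational by part~(\ref{lemma:degrees2}), so $B_0 \to Y_0$ is also dense of degree $1$). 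In every case the relevant degree is a product of finitely many of the bounds in Lemma~\ref{lemma:degrees}, hence bounded by a constant $d = d(M)$ independent of $r$.

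Next I would apply Lemma~\ref{lemma:abstractgonalitybounds}. Given $V, W \in \mathfrak{S}$, choose a dense rational map $g: V \dashrightarrow W$ of degree at most $d$ (possibly after swapping the roles of $V$ and $W$, but since we will get the symmetric conclusion it does not matter which direction we pick; to be safe one picks, for each unordered pair, whichever direction is supplied directly or as a composite by the diagram). Lemma~\ref{lemma:abstractgonalitybounds} then gives
\[
\gamma(W) \leq \gamma(V) \leq d\cdot \gamma(W).
\]
Combined with the analogous inequality obtained from a dense map in the other direction (or simply from the left-hand inequality applied with the roles reversed, using that $\gamma(V)\le d\cdot\gamma(W)$ and $\gamma(W)\le d\cdot\gamma(V)$ together), this yields $d^{-1}\gamma(W) \leq \gamma(V) \leq d\cdot\gamma(W)$, i.e.\ $\gamma(V)\sim\gamma(W)$ in the sense of Definition~\ref{definition:sim}, with implied constants depending only on $M$.

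The only mild subtlety — and the step I expect to require the most care — is bookkeeping the directions of the maps: Lemma~\ref{lemma:degrees} provides maps $X_0\to A_0$, $E_0\to A_0$, $X_0\to Y_0$, $Y_0\to B_0$, all pointing "inward", so to compare, say, $A_0$ with $B_0$ one routes through $X_0$ (using $X_0\to A_0$ and $X_0\to Y_0 \to B_0$) and must invoke Proposition~\ref{prop:mapsarenice} to know these are genuine finite-degree dominant maps of curves so that Lemma~\ref{lemma:abstractgonalitybounds} applies, and must check that a dense rational map of curves always admits a dense rational "partner" in the opposite direction of finite degree (true since dense rational maps of curves are dominant morphisms of the smooth projective models, hence finite, and a finite dominant morphism of curves, while not invertible, still lets us bound $\gamma$ in both directions via Lemma~\ref{lemma:abstractgonalitybounds} without needing an actual inverse). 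Once this is laid out, the proposition is immediate; it is essentially just the transitivity of $\sim$ applied to the five-term chain $A_0, E_0, X_0, Y_0, B_0$.
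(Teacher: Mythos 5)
Your proposal is correct and follows essentially the same route as the paper: the paper's proof likewise reduces the claim, via Lemma~\ref{lemma:abstractgonalitybounds} and the transitivity of $\sim$, to exhibiting for each pair a dominant map in one direction or the other whose degree is bounded independently of $r$, which is exactly what Lemma~\ref{lemma:degrees} supplies. Your only superfluous step is seeking a map in the reverse direction: the two-sided inequality $\gamma(W)\leq\gamma(V)\leq d\cdot\gamma(W)$ from a single dominant map already gives $\gamma(V)\sim\gamma(W)$.
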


  \begin{proof}
  By Lemma~\ref{lemma:abstractgonalitybounds} it suffices to show that for all but finitely many $r$ that  there is a  dominant map from $V$ to $W$ or from $W$ to $V$ whose degree is independent of $r$.   The lemma now follows from Lemma~\ref{lemma:degrees}.
\end{proof}

\section{Isolation of cusps}\label{section:Isolationofcusps}

Let $M$ be a finite volume hyperbolic 3-manifold with $k$ cusps.   Geometric isolation was studied extensively in   \cite{MR1202390}.  

\begin{definition}

Cusps $1,\dots, l$ are {\em geometrically isolated} from cusps $l+1,\dots k$ if any deformation induced by Dehn filling cusps $l+1, \dots k$ while keeping cusps $1,\dots l$ complete does not change the Euclidean structures at cusps $1,\dots, l$.

Cusps $1,\dots, l$ are {\em first order isolated} from cusps $l+1,\dots, k$ if the map from the space of deformations induced by Dehn filling cusps $l+1,\dots, k$ while keeping cusps $1,\dots, l$ complete to the space of Euclidean structures at cusps $1,\dots, l$ has zero derivative at the point corresponding to the complete structure on $M$.

Cusps $1,\dots, l$ are {\em strongly geometrically isolated} from cusps $l+1, \dots, k$ if performing integral Dehn filling of cusps $1,\dots, l$ and replacing the cusps by goedesics $\gamma_1,\dots, \gamma_l$ and then deforming cusps $l+1,\dots, k$ does not change the geometry of $\gamma_1,\dots, \gamma_l$. 
\end{definition}

Strong geometric isolation implies geometric isolation which implies first order isolation. Strong geometric isolation and first order geometric isolation are symmetric conditions, but geometric isolation is not necessarily symmetric.

We now restrict to our case of interest, namely the case
when $M$ has exactly two cusps, $T_1$ and $T_2$.  The projection $\varpi_i(A_0(M))$ is either a curve or is dense in $\C^2$ (see Lemma~\ref{lemma:containment}
below).
This projection is a curve exactly when the projection $\varpi_i(B_0(M))$ is a curve, and therefore this dimension is well-defined even if there are multiple $A_0(M)$ components.

We will adopt the notation from  \cite{MR1202390}. Define the set \[ S= \{ (u_1,\tau_1(u_1,u_2),u_2, \tau_2(u_1,u_2))\}\subset \C^4(u_1,w_1,u_2,w_2)\] where  $u_i $ is twice the logarithm of the eigenvalue of the holonomy of the meridian of $T_i$ and $v_i$ is twice the logarithm of the  eigenvalue of the holonomy of the longitude of $T_i$,   and $\tau_i=u_i/v_i$.

\begin{lemma}\label{lemma:geoiso} The first cusp of $M$  is geometrically isolated from the second cusp of $M$ if and only if  $\varpi_1(A_0(M))$ is a curve.
\end{lemma}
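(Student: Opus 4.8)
The plan is to move the problem off the character variety and onto the local model of the deformation space of $M$ near its complete structure, where ``geometric isolation'' becomes a concrete condition on the Neumann--Zagier cusp parameters, and then to quote Neumann--Reid \cite{MR1202390}.

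First I would set up the dictionary. The complete structure gives a smooth point $\chi_0\in X_0(M)$, and near $\chi_0$ Thurston's deformation theory identifies $X_0(M)$ with a neighbourhood of $0$ in $\C^2$ via the meridional log-holonomies $(u_1,u_2)$; the longitudinal log-holonomies are then holomorphic functions $v_i=v_i(u_1,u_2)$ with $v_i(0,0)=0$ and $\partial v_i/\partial u_i(0,0)$ the cusp shape. The differential of the restriction $i^*\colon X_0(M)\to A_0(M)$ at $\chi_0$ sends $du_j$ in the $j$-th cusp block to $(du_j,\tfrac{\partial v_j}{\partial u_j}(0)\,du_j)$, hence is injective, so $i^*$ is a local biholomorphism onto a $2$-dimensional submanifold and $\chi_0'=i^*(\chi_0)$ is a smooth point of $A_0(M)$. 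In (log-)eigenvalue coordinates $A_0(M)$ coincides near $\chi_0'$ with the set $\{(u_1,v_1(u_1,u_2),u_2,v_2(u_1,u_2))\}$ (this is the local model $S$ of Neumann--Reid, in log-holonomy coordinates), and under this identification $\varpi_1$ becomes the projection $(u_1,u_2)\mapsto(u_1,v_1(u_1,u_2))$. By Proposition~\ref{prop:relatedvarieties} and the compatibility of the finite covering maps with the cusp-projections one may work with $B_0(M)$ or $E_0(M)$ in place of $A_0(M)$, which also makes the conclusion insensitive to the choice of canonical component.

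Next I would reduce the statement to a property of $v_1$. Since $X_0(M)$ is irreducible, so is $A_0(M)$ by Lemma~\ref{lemma:irreducible}; it has dimension $2$, and the meridional coordinate of the first cusp is non-constant on it (Dehn fillings of the first cusp realise infinitely many values), so $\overline{\varpi_1(A_0(M))}$ is irreducible of dimension $1$ or $2$. It is a curve exactly when $\varpi_1\!\mid_{A_0(M)}$ has one-dimensional generic fibre. Because $\chi_0'$ is a smooth point and, on the source of the local model, $\varpi_1$ is $(u_1,u_2)\mapsto(u_1,v_1)$, the local image near $\chi_0'$ is one-dimensional iff $\partial v_1/\partial u_2\equiv 0$ near $(0,0)$, i.e.\ iff $v_1$ depends only on $u_1$ there; and by irreducibility of $A_0(M)$ a two-dimensional $\overline{\varpi_1(A_0(M))}$ is incompatible with a whole Euclidean neighbourhood of $\chi_0'$ mapping into a curve, so the local computation pins down the global dimension. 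Thus $\varpi_1(A_0(M))$ is a curve if and only if $v_1$ is a function of $u_1$ alone in a neighbourhood of the complete structure.

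Finally I would invoke Neumann--Reid \cite{MR1202390}: the first cusp of $M$ is geometrically isolated from the second precisely when, in the model above, the longitudinal parameter $v_1$ of the first cusp is independent of the second meridional parameter $u_2$; combined with the previous paragraph this yields the lemma. The main obstacle is exactly this last input. The definition of geometric isolation only posits that the Euclidean structure (the cusp shape $v_1/u_1$) of the \emph{complete} first cusp is unchanged as the second cusp is filled --- a condition along the slice $\{u_1=0\}$ --- and upgrading this to ``$v_1$ is a function of $u_1$ on all of $S$'' is the substance of Neumann--Reid's rigidity analysis of the Neumann--Zagier data; I would quote their precise formulation rather than reprove it, and be careful about how their definition matches the one used here. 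The remaining ingredient --- that the analytic-local dimension count at $\chi_0'$ computes the dimension of the Zariski closure $\overline{\varpi_1(A_0(M))}$ --- is routine, via irreducibility of $A_0(M)$ and upper-semicontinuity of fibre dimension.
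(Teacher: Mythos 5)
Your setup coincides with the paper's: both pass to the Neumann--Reid local model $S=\{(u_1,\tau_1(u_1,u_2),u_2,\tau_2(u_1,u_2))\}$ and reduce the statement to the observation that $\varpi_1(A_0(M))$ is a curve precisely when the longitude parameter of the first cusp (your $v_1$, the paper's $\tau_1=v_1/u_1$, equivalent away from $u_1=0$) is a function of $u_1$ alone. The direction ``curve $\Rightarrow$ geometrically isolated'' then works as you describe: global independence of $u_2$ restricts to constancy of $\tau_1(0,u_2)$ along the slice $u_1=0$, which is Neumann--Reid's condition for geometric isolation.

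The gap is in the converse, and it is exactly the point you flag as ``the main obstacle.'' You propose to close it by quoting Neumann--Reid for the implication ``geometric isolation $\Rightarrow$ $v_1$ is independent of $u_2$ on all of $S$.'' But that global statement is Neumann--Reid's characterization of \emph{strong} geometric isolation (their Theorem 4.3, which is what the paper invokes in Lemma~\ref{lemma:stronggeoiso}); geometric isolation itself is only the slice condition that $\tau_1(0,u_2)$ is constant, and the two notions are genuinely different --- indeed the paper needs a separate corollary, using isolation of \emph{both} cusps, to upgrade to strong isolation. So there is no Neumann--Reid result of the form you want to cite, and deferring the crucial implication to a reference that characterizes a strictly stronger property leaves the proof incomplete (as written, your argument would prove ``$\varpi_1(A_0(M))$ is a curve iff the first cusp is \emph{strongly} isolated from the second''). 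The paper closes this direction without any global upgrade: from the slice condition alone, the line $\{u_1=0\}$ meets the image $\{(u_1,\tau_1(u_1,u_2))\}$ in the single point $(0,c)$, and it argues that an irreducible image that is dense in $\C^2$ cannot have this property (a neighborhood of $(0,c)$ in the image would have to be two-dimensional); hence the image has dimension one and, being irreducible by Lemma~\ref{lemma:irreducible}, is a curve. You need an argument of this kind --- working directly from the slice condition --- rather than a citation for a global independence statement.
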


\begin{proof}

The $u_i$ coordinates correspond to the meridional $m_i$ coordinates. The $\tau_i$ coordinates are of the form $u_i/v_i$ and therefore (as long as $u_iv_i \neq 0$) are in bijective correspondence with the $v_i$ coordinates.  These, in turn, give the longitudinal coordinates, $l_i$.  Therefore, the dimension of $A_0(M)$ is the same as the dimension of $S$, and the dimension of $\varpi_i(A_0(M))$ equals the dimension of $\varpi_i(S)$. 

The projection $\varpi_1(A_0(M))$  is $\{(u_1,\tau_1(u_1,u_2))\}$.  This is a curve if and only if $\tau_1$ is a function of $u_1$ alone, and not a function of $u_2$.  In this case $\tau_1(0,u_2)$ is constant.  This is the condition for $T_1$ to be geometrically isolated from $T_2$ (see  \cite{MR1202390}, proof of Theorem 4.2). 

Assume that $T_1$ is geometrically isolated from $T_2$.    The projection $\varpi_1(A_0(M))$ is a curve exactly when $\{(u_1,\tau_1(u_1,u_2))\}$ is a curve.  In this case,  by \cite{MR1202390} $\tau_1(0,u_2)$ is a constant, $c$.  Therefore the line $\{(0,w_1)\} \subset \C^2(u_1,w_1)$ 
 intersects $\{(u_1,\tau_1(u_1,u_2))\}$ in a single point, $(0,c)$.   It follows that $\{(u_1,\tau_1(u_1,u_2))\}$ cannot be dense in $\C^2$.  (A Riemannian neighborhood of any point in $\{(u_1,\tau(u_1,u_2))\}$ is homeomorphic to $\C^2$ if the image is dense.  However, by the above, a neighborhood of $(0,c)$ cannot be homeomorphic to $\C^2$.) Therefore, the image has dimension 1. As $A_0(M)$ is irreducible by construction, by Lemma~\ref{lemma:irreducible} we conclude that $\varpi_1(A_0(M))$ is a curve as well.

\end{proof}

\begin{lemma}\label{lemma:stronggeoiso}
The cusps of $M$ are strongly geometrically isolated from one another if and only if $A_0(M) \cong C_1 \times C_2$ where $C_i$ is a curve in $\C^2(m_i,l_i)$.  
\end{lemma}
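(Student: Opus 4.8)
The plan is to translate the statement about strong geometric isolation into a statement about the parametrizing set $S$ from \cite{MR1202390}, using the same correspondence between $S$ and $A_0(M)$ established in the proof of Lemma~\ref{lemma:geoiso}. Recall that $S = \{(u_1,\tau_1(u_1,u_2),u_2,\tau_2(u_1,u_2))\}$, and that (away from the coordinate hyperplanes $u_iv_i=0$) the coordinates $(u_i,\tau_i)$ are in bijective correspondence with the meridian-longitude eigenvalue coordinates $(m_i,l_i)$ on $A_0(M)$. Thus $A_0(M)$ is birational to $S$, and in fact the correspondence is close enough to an isomorphism on the relevant Zariski-open sets that a product decomposition of one yields a product decomposition of the other; I would note this reduction first and then work entirely with $S$.

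First I would recall the analytic characterization of strong geometric isolation from \cite{MR1202390}: the cusps are strongly geometrically isolated from one another precisely when $\tau_1$ depends only on $u_1$ and $\tau_2$ depends only on $u_2$, i.e. $\tau_1 = \tau_1(u_1)$ and $\tau_2 = \tau_2(u_2)$ as functions on the deformation space (this is the content of the relevant theorem in \cite{MR1202390} characterizing SGI; contrast with Lemma~\ref{lemma:geoiso}, where only $\tau_1(0,u_2)$ being constant was needed for one-way geometric isolation). Granting this, the ($\Leftarrow$) direction is immediate: if $A_0(M)\cong C_1\times C_2$ with $C_i\subset\C^2(m_i,l_i)$, then under the correspondence above each $\tau_i$ is pulled back from the $i$-th factor and hence is a function of $u_i$ alone, so by the \cite{MR1202390} criterion the cusps are strongly geometrically isolated. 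For ($\Rightarrow$): assuming SGI, $\tau_i=\tau_i(u_i)$, so $S = \{(u_1,\tau_1(u_1),u_2,\tau_2(u_1))\}$ visibly splits as $\{(u_1,\tau_1(u_1))\}\times\{(u_2,\tau_2(u_2))\}$, giving a product decomposition $S\cong S_1\times S_2$ with $S_i\subset\C^2$ a curve (it is one-dimensional because $A_0(M(-,r))$, a Dehn surgery curve, sits inside, or more simply because $\tau_i$ is a genuinely non-constant analytic function of the one variable $u_i$). Transporting this across the birational correspondence to $(m_i,l_i)$ coordinates gives $A_0(M)\cong C_1\times C_2$.

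The main obstacle is the last step: the correspondence between $S$ and $A_0(M)$ is only manifestly birational, not an isomorphism, because $\tau_i = u_i/v_i$ inverts and the passage from $(u_i,v_i)$ to eigenvalues $(m_i,l_i)$ involves exponentials — so I must argue that the product structure survives. Here I would argue as in the proof of Lemma~\ref{lemma:geoiso}: $A_0(M)$ is irreducible (by construction and Lemma~\ref{lemma:irreducible}), $\varpi_i(A_0(M))$ is a curve $C_i$ exactly in the SGI case (by Lemma~\ref{lemma:geoiso}, since SGI implies geometric isolation in both directions), and the natural regular map $A_0(M)\to C_1\times C_2$ given by $(\varpi_1,\varpi_2)$ is then a dominant map of irreducible surfaces which is generically injective — injectivity because a point of $A_0(M)$ is determined by the two cusp-pairs $(m_1,l_1)$ and $(m_2,l_2)$ once we know there is no further constraint coupling them, which is precisely what $\tau_i=\tau_i(u_i)$ encodes. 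A generically injective dominant morphism of normal irreducible varieties of the same dimension that is also proper (or an isomorphism in codimension one, checked on the explicit coordinates) is an isomorphism; I would invoke this to upgrade the set-theoretic product decomposition of $S$ to the claimed isomorphism $A_0(M)\cong C_1\times C_2$. The converse embedding, constructing the inverse regular map from the coordinate functions, is the routine verification I would leave implicit.
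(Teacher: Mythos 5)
Your proposal is correct in substance and rests on the same key input as the paper, namely the characterization from \cite{MR1202390} of strong geometric isolation as the decoupling $\tau_i=\tau_i(u_i)$ (equivalently, $v_i$ depending on $u_i$ alone) together with the symmetry of SGI; the ($\Leftarrow$) direction is then the same in both treatments. Where you diverge is the final translation back to $A_0(M)$: the paper argues that under SGI every irreducible polynomial in $\mathfrak{U}_{A_0(M)}$ involves only $(m_1,l_1)$ or only $(m_2,l_2)$, so the vanishing ideal splits and $A_0(M)$ is a product; you instead use Lemma~\ref{lemma:geoiso} to get that both projections $\varpi_i(A_0(M))$ are curves $C_i$ and then identify $A_0(M)$ with $C_1\times C_2$ via the map $(\varpi_1,\varpi_2)$. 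Your route is essentially the one the paper itself uses in the Corollary immediately following this lemma, and it works --- but you reach for heavier and, as stated, unjustified machinery at the last step. Two corrections: (i) the map $(\varpi_1,\varpi_2)$ is literally the inclusion of $A_0(M)$ into $C_1\times C_2\subset\C^4$ (since $A_0(M)\subset\varpi_1^{-1}(C_1)\cap\varpi_2^{-1}(C_2)=C_1\times C_2$), so injectivity is automatic and your justification of it (``no further constraint coupling them'') is really an argument for dominance, not injectivity; (ii) you do not need, and have not verified, normality or properness --- character varieties and their images can be singular --- but you also don't need Zariski's main theorem here: $A_0(M)$ is a closed irreducible two-dimensional subset of the irreducible two-dimensional variety $C_1\times C_2$ (each $C_i$ is irreducible by Lemma~\ref{lemma:irreducible}), hence equals it. With that elementary dimension count substituted for the normality/properness appeal, your argument is complete; also note the typo $\tau_2(u_1)$ for $\tau_2(u_2)$ in your description of $S$.
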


\begin{proof}
  By \cite{MR1202390} Theorem 4.3, $T_1$ is strongly geometrically
  isolated from $T_2$ if and only if $v_1$ is dependent on $u_1$ and
  not $u_2$.  Strong geometric isolation is a symmetric condition (see
  \cite{MR1202390}). Therefore, $T_1$ is strongly geometrically
  isolated from $T_2$ if and only if $m_1$ depends only on $l_1$ and
  $m_2$ depends only on $l_2$. Therefore, for any irreducible
  polynomial $f$ in the vanishing ideal of $A_0(M)$, $f$ is a
  polynomial of $m_1$ and $l_1$ or of $m_2$ and $l_2$.  It follows
  that $A_0(M)$ is the product of curves if and only if $T_1$ is
  strongly geometrically isolated from $T_2$.

\end{proof}

\begin{cor}
The cusps of $M$ are strongly geometrically isolated from one another if and only if both cusps are geometrically isolated from one another.
\end{cor}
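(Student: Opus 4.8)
The plan is to read off the corollary from Lemma~\ref{lemma:geoiso} and Lemma~\ref{lemma:stronggeoiso}, the only real work being a dimension count. First I would pin down the meaning of the hypothesis: since geometric isolation is not a symmetric condition, ``both cusps are geometrically isolated from one another'' asserts that $T_1$ is geometrically isolated from $T_2$ \emph{and} $T_2$ is geometrically isolated from $T_1$. Applying Lemma~\ref{lemma:geoiso} once as stated, and once with the roles of the two cusps interchanged (which is legitimate, the framing of each cusp being fixed throughout), this is equivalent to the assertion that both $\varpi_1(A_0(M))$ and $\varpi_2(A_0(M))$ are curves.

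For the forward implication, suppose the cusps are strongly geometrically isolated. By Lemma~\ref{lemma:stronggeoiso} we have $A_0(M)=C_1\times C_2$ inside $\C^4(m_1,l_1,m_2,l_2)$, with $C_i$ a curve in $\C^2(m_i,l_i)$; hence $\varpi_i(A_0(M))=C_i$ is a curve for $i=1,2$, and by the previous paragraph both cusps are geometrically isolated from one another. (Alternatively this direction is already contained in the two remarks that strong geometric isolation implies geometric isolation and is a symmetric condition.)

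For the converse, suppose both $\varpi_1(A_0(M))$ and $\varpi_2(A_0(M))$ are curves, and put $C_i=\overline{\varpi_i(A_0(M))}\subset\C^2(m_i,l_i)$. Every point of $A_0(M)\subset\C^2(m_1,l_1)\times\C^2(m_2,l_2)$ has its first pair of coordinates in $C_1$ and its second pair in $C_2$, so $A_0(M)\subseteq C_1\times C_2$, an inclusion of closed sets. Since $A_0(M)$ is irreducible, each $C_i$ is irreducible by Lemma~\ref{lemma:irreducible}, so $C_1\times C_2$ is an irreducible variety of dimension $1+1=2$. On the other hand $\dim A_0(M)=2$ — this is the dimension of the set $S$ appearing in the proof of Lemma~\ref{lemma:geoiso}, which is parametrized by $(u_1,u_2)\in\C^2$. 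An irreducible closed subvariety of an irreducible variety having the same dimension must be the whole variety, so $A_0(M)=C_1\times C_2$, and Lemma~\ref{lemma:stronggeoiso} gives that the cusps are strongly geometrically isolated from one another.

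I do not expect a genuine obstacle here: all the geometry has already been extracted into Lemmas~\ref{lemma:geoiso} and~\ref{lemma:stronggeoiso}, and the remaining input is the standard fact that a full-dimensional irreducible closed subvariety of an irreducible variety is everything, together with the (routine) observation that Lemma~\ref{lemma:geoiso} applies symmetrically in the two cusps. The one point worth stating carefully is exactly that symmetry, since geometric isolation itself is not symmetric, whereas the displayed biconditional shows that \emph{mutual} geometric isolation coincides with the symmetric notion of strong geometric isolation.
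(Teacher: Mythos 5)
Your proof is correct and follows essentially the same route as the paper: the forward direction via Lemma~\ref{lemma:stronggeoiso} followed by Lemma~\ref{lemma:geoiso}, and the converse by observing that $A_0(M)\subseteq C_1\times C_2$ and invoking Lemma~\ref{lemma:stronggeoiso} again. Your explicit dimension count (irreducible, both of dimension two, hence equal) is a welcome tightening of the paper's own final step, which passes from the containment $A_0(M)\subseteq C_1\times C_2$ to ``$A_0(M)$ is a product of curves'' without comment.
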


\begin{proof}
If the cusps of $M$ are strongly geometrically isolated, then by Lemma~\ref{lemma:stronggeoiso} $A_0(M)\cong C_1\times C_2$ is a product of curves, so that $\varpi_i(A_0(M))=C_i$ is a curve for $i=1,2$ and by Lemma~\ref{lemma:geoiso}  both cusps are geometrically isolated from each other.  If both cusps are geometrically isolated from each other, then $\varpi_i(A_0(M))$ is a curve, say $C_i$ for $i=1,2$.  The surface $A_0(M)$  is contained in both $\varpi_1(C_1)^{-1}= C_1 \times \C$ and $\varpi_2(C_2)^{-1}= \C\times C_2$. The intersection of these sets is $C_1\times C_2$, so $A_0(M)$ is a product of curves. By Lemma~\ref{lemma:stronggeoiso} we conclude that the cusps of $M$ are strongly geometrically isolated.

\end{proof}

\section{Gonality in Dehn surgery space}\label{section:gonalityindehnsurgeryspace}

In this section we restrict to the case where $M$ is a finite volume hyperbolc 3-manifold with exactly two cusps. 
Before we proceed, we define some Chebyshev polynomials which will appear as traces.
\begin{definition}\label{definition:fib}
For any integer $k$ let $f_k(x)$ be the $k^{th}$ Fibonacci
  polynomial, that is defined $f_0(x)=0$, $f_1(x)=1$ and for all other
  $k$, define $f_k(x)$ recursively by the
  relation \[f_{k+1}(x)+f_{k-1}(x)= xf_k(x).\] Define
  $g_k=f_k-f_{k-1}$, and $h_k(x)=f_{k+1}-f_{k-1}$.
\end{definition}

\begin{remark}
With $x=s+s^{-1}$ then 
 \[ f_k(x) = \frac{s^k-s^{-k}}{s-s^{-1}}, \quad 
 g_k(x) = \frac{s^k+s^{1-k}}{s+1}, \quad \text{and} \quad 
 h_k(x) = s^k+s^{-k}.
 \] 

\end{remark}

The effect of  $r=p/q$ Dehn filling of the second cusp of $M$ on the fundamental group is that it introduces the group relation  $M_2^pL_2^q=1$.  Therefore, for a representation $\rho$ the relation introduces an additional  matrix relation  $\rho(M_2)^p\rho(L_2)^q=I_2$ where $I_2$ is the $2\times 2$ identity matrix.  Up to conjugation, $\rho(M_2)$ can be taken to be upper triangular, and since $M_2$ and $L_2$ commute, $\rho(L_2)$  is upper triangular as well. (If $\rho(M_2)=I_2$ we can conjugate so that $\rho(L_2)$ is upper triangular.)  The character and eigenvalue varieties  are not changed by conjugation.  Therefore, we may take  
\[ \rho(M_2) = \mat{m}{s}{0}{m^{-1}} \quad \text{and} \quad \rho(L_2)=\mat{l}{t}{0}{l^{-1}}\]
where $m,l\in \C^*$ and $s,t\in \C$.  
The commuting condition ensures that
\begin{equation}\label{eqn:1} t(m-m^{-1})=s(l-l^{-1}). \end{equation}
By the Cayley-Hamilton theorem, 
\[ \rho(M_2)^p = \mat{m^p}{sf_p(m+m^{-1})}{0}{m^{-p}} \quad \text{and} \quad \rho(L_2)^{-q}=\mat{l^{-q}}{-t f_q(l+l^{-1})}{0}{l^{q}}.\]
As $\rho(M_2)^p=\rho(M_2)^{-q}$,  we conclude that 
\[ m^pl^q=1 \ \text{ and } \  sf_p(m+m^{-1})=-t f_q(l+l^{-1}). \]
If neither $m$ nor $l$ is $\pm 1$  then \eqref{eqn:1} implies that 
\[ 
s \frac{m^p-m^{-p}}{m-m^{-1}} = t \frac{m^p-m^{-p}}{l-l^{-1}}.
\]
With $m^pl^q=1$ the condition  $sf_p(m+m^{-1})=-t f_q(l+l^{-1})$ follows.  Therefore, the relation $sf_p(m+m^{-1})=-t f_q(l+l^{-1})$ follows from \eqref{eqn:1}  and $m^pl^q=1$ unless $m$ or $l$ is $\pm 1$.  If $m=\pm 1$ or $l=\pm 1$, an additional identity with $s$ and $t$ is required to deduce $sf_p(m+m^{-1})=-t f_q(l+l^{-1})$ from the commuting condition and $m^pl^q=1$. 
(The commuting condition \eqref{eqn:1} is part of the defining equations for $X(M)$ as $M_i$ and $L_i$ commute in the group.)
In the next section, we will see how this additional relation manifests as an intersection in the character varieties.  First, we will  discuss the containment of these sets.

The set $X(\partial M)$ is contained in $\C^4(m_1,l_1,m_2,l_2)$ where the coordinates are distinguished eigenvalues of the meridians and longitudes of the cusps. 
Any irreducible component has
dimension at most two, and the image of a canonical component
has dimension two (See \cite{MR1886685}  and  \cite{MR2104008} Proposition 12). The set $A( M(-,r))$ is contained in $\C^2(m,l)$
where $m$ and $l$ are identified with $m_1$ and $l_1$, and any
canonical component has dimension one.

For all but finitely many $r,$ $Y_0(M(-,r)) $ is contained in some $ Y_0(M)$.  Therefore for all but finitely many $r$,
$X_0(M(-,r))$ is contained in some $X_0(M)$.  There may be numerous
lifts of $Y_0(M)$, each containing infinitely many $X_0(M(-,r))$. 
 As $A_0(M)\subset \C^4$ and $A_0(M(-,r)) \subset \C^2$ it is not the case that $A_0(M(-,r))\subset A_0(M)$.   In Lemma~\ref{lemma:containment} we will show the relationship between these two sets.

\subsection{Filling Relations}\label{section:FillingRelations}

 Recall the notation established in \S\ref{section:DefinitionsandAlgebraicGeometry}; if $f$ is a polynomial, $V(f)$ indicates the vanishing set of (the ideal generated by) $f$ in a specified $\C^n$.  We will make extensive use of the sets and polynomials from  Definition~\ref{definition:psi}.  We will write $V(m_2^{\pm p}l_2^q-1)$ for $ V(m_2^{|p|}l_2^q-1) \cup V(m_2^{|p|}-l_2^q)$.

\begin{lemma}\label{lemma:Xcharvarinclusion}  Assume that  $M(-,r)$ is hyperbolic.  For all but finitely many $r$ there is some $X_0(M)$ such that 
\[ X_0(M(-,r)) \subset   X_0(M) \cap \Big( V(m_2^{\pm p}l_2^q-1) \Big).\]
\end{lemma}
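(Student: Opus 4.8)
The plan is to combine the facts about canonical components already established with the explicit description of the Dehn filling relation derived just above the statement. First I would invoke the discussion preceding Lemma~\ref{lemma:Xcharvarinclusion}: for all but finitely many $r$, Thurston's Dehn Surgery Theorem together with the fact that a discrete faithful character is a simple point gives an inclusion $X_0(M(-,r)) \subset X_0(M)$ for some canonical component $X_0(M)$ of $X(M)$. So the content to prove is that any character $\chi$ lying on $X_0(M(-,r))$ also lies in $V(m_2^{\pm p}l_2^q - 1)$, i.e. that $m_2^p l_2^q = 1$ or $m_2^{|p|} = l_2^q$ (after the sign normalisation recorded in the notation $V(m_2^{\pm p}l_2^q-1)$).

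Next I would run the matrix computation from the start of \S\ref{section:gonalityindehnsurgeryspace}. Given a representation $\rho$ whose character lies on $X_0(M(-,r))$, the filling relation $M_2^p L_2^q = 1$ forces $\rho(M_2)^p \rho(L_2)^q = I_2$. Up to conjugation (which does not change the character or the eigenvalue coordinates) we may put $\rho(M_2)$ and $\rho(L_2)$ in simultaneous upper triangular form, with diagonal entries $m^{\pm 1}$ and $l^{\pm 1}$; here $m = m_2$, $l = l_2$ are precisely the eigenvalue coordinates on $A(M(-,r))$. Reading off the diagonal of $\rho(M_2)^p \rho(L_2)^q = I_2$ gives $m^p l^q = 1$ directly. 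This is an equation on $R_E(M(-,r))$; pushing it forward along the (dense, finite) maps $R_E \to X_E \to X$ shows it holds as a relation on the eigenvalue-extended model, hence descends to the statement that $X_0(M(-,r))$ lies in the image of this relation, which is exactly $V(m_2^{|p|}l_2^q - 1) \cup V(m_2^{|p|} - l_2^q) = V(m_2^{\pm p}l_2^q - 1)$ once one accounts for the fact that the character only records $m_2$ up to inversion (so the sign of $p$ and the choice of which eigenvalue is called $m_2$ are not intrinsic, which is why the union of the two vanishing sets appears).

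I expect the main subtlety — not a deep obstacle, but the step needing care — to be bookkeeping between the $\SL_2(\C)$ character variety $X(M(-,r))$, on which the coordinates $m_2, l_2$ are not a priori defined (they are eigenvalues, living on the extended varieties $X_E$, $E$, $R_E$), and the eigenvalue-type variety where the monomial relation $m_2^p l_2^q = 1$ literally makes sense. The clean way to handle this is to carry out the argument on $R_E(M(-,r))$ or $X_E(M(-,r))$, where eigenvalue coordinates are honest functions, establish $m_2^p l_2^q = 1$ there, and then use the commuting diagram and the finite dense maps $p_1, p_2$ of \S 4 (and Lemma~\ref{lemma:irreducible}, Proposition~\ref{prop:mapsarenice}) to transfer the conclusion to $X_0(M(-,r))$, interpreting "$X_0(M(-,r)) \subset V(m_2^{\pm p}l_2^q-1)$" as an inclusion of the appropriately coordinatised models. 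One also must discard the finitely many exceptional $r$ where $X_0(M(-,r))$ fails to sit inside a single $X_0(M)$, which is already permitted by the phrasing "for all but finitely many $r$."
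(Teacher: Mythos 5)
Your proposal is correct and its skeleton matches the paper's: both first use the discussion preceding the lemma (Thurston's Dehn surgery theorem plus smoothness of the discrete faithful character) to place $X_0(M(-,r))$ inside some $X_0(M)$ for all but finitely many $r$, and both then extract a monomial eigenvalue relation from the filling relation $M_2^pL_2^q=1$. The one real difference is how that relation is extracted. The paper stays entirely on $X(M)$, where only traces are honest regular functions: from $\rho(M_2)^p=\rho(L_2)^{-q}$ it records the trace identity $h_p(I_{M_2})=h_q(I_{L_2})$, i.e.\ $m_2^p+m_2^{-p}=l_2^q+l_2^{-q}$, and then observes that solutions of $x+x^{-1}=y+y^{-1}$ satisfy $x=y^{\pm1}$ --- this is exactly where the union $V(m_2^{|p|}l_2^q-1)\cup V(m_2^{|p|}-l_2^q)$ comes from. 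You instead read the $(1,1)$ entry of $\rho(M_2)^p\rho(L_2)^q=I_2$ on the extended representation variety, obtaining the sharper single relation $m_2^pl_2^q=1$ there, and then descend. That is fine --- the sharper relation is contained in the union, so the stated containment follows --- but your explanation of why the union appears is slightly off: the genuine ambiguity in the eigenvalue coordinates is \emph{simultaneous} inversion $(m_2,l_2)\mapsto(m_2^{-1},l_2^{-1})$ (a change of common eigenvector), and this preserves $m_2^pl_2^q=1$, so it does not produce the second component. In the paper the union is rather the price of using only trace data, which cannot distinguish $m_2^p=l_2^{-q}$ from $m_2^p=l_2^q$ (see the remark immediately following the lemma). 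The trace route also lets the paper avoid the $R_E\to X_E\to X$ descent that you rightly flag as the delicate bookkeeping step; if you keep your version, that descent is the part you should write out carefully.
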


\begin{proof} 
For all but perhaps finitely many $r$ such that $M(-,r)$ is hyperbolic there is some $X_0(M)$ containing a given $X_0(M(-,r))$. Consider these $r=p/q$.
The fundamental group $\pi_1(M(-,r))$ is the quotient of $\pi_1(M)$ corresponding to the addition of a single relation, $M_2^pL_2^q=1$.  Therefore, the trace of $\rho(M_2)^p$ equals the trace of  $\rho(L_2)^{-q}$.  By construction, these traces are $h_p(\chi_\rho(M_2))$ and $h_q(\chi_\rho(L_2))$ with $\chi_\rho(M_2)=m_2+m_2^{-1}$ and $\chi_\rho(L_2)=l_2+l_2^{-1}$. This shows that 
\[
 X_0(M(-,r)) \subset    X_0(M) \cap V\big( h_p(m_2+m_2^{-1})-h_q(l_2+l_2^{-1})\big).
\]
It suffices to show that 
\[ 
V\big( h_p(m_2+m_2^{-1})-h_q(l_2+l_2^{-1})\big) \subset V(m_2^{\pm p}l_2^q-1).
\]
This is equivalent to showing that the solution set of $m_2^p+m_2^{-p}= l_2^q+l_2^{-q}$ is contained in the solution set of $m_2^{\pm p}l_2^q=1$ in $\C^4(m_2,l_2)$.  This follows directly from the observation that the  solutions to an equation of the form $x + x^{-1} =  y + y^{-1}$ are $ x = y^{\pm 1}$.    

 \end{proof}

\begin{remark}
It is possible that other trace relations will specify the sign in the relation $m_2^{\pm p}l_2^q -1$ but this is not necessarily the case.  
\end{remark}

We will assume that $M$ is hyperbolic and $X_0(M(-,r)) \subset
X_0(M)$.  The sets $A_0(M(-,r))$ and $A_0(M)$ lie in different ambient
spaces, but we have the following lemma which allows us to concretely
relate $A_0(M(-,r))$ with $A(M)$.  Since $M$ is hyperbolic $A_0(M)$ is
a surface, and for all but finitely many $r$, $A_0(M(-,r))$ is a
curve. As a result, the projection map $\varpi_i(A_0(M))$ has
dimension at most two.  The following lemma will allow us to deduce that
$\varpi_i(A_0(M))$ cannot be a point, and therefore is either a curve
or is dense in $\C^2$. (The image is irreducible by
Lemma~\ref{lemma:irreducible}.)  Our analysis will depend upon the
dimension of these images.

\begin{lemma}\label{lemma:containment}
Assume that  $M(-,r)$ is hyperbolic.  For all but finitely many $r$,  $A_0(M) \cap \mathcal{H}(r)$ is a finite union of curves and
the variety $A_0(M(-,r))$ is   a curve component  of the closure of $\varpi_1( A_0(M) \cap \mathcal{H}(r))$. \end{lemma}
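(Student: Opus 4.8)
The plan is to compare the $r$-filling character variety with a slice of $X_0(M)$ and then push everything down to the eigenvalue/peripheral coordinates. First I would recall from Lemma~\ref{lemma:Xcharvarinclusion} that, for all but finitely many $r$, there is a canonical component $X_0(M)$ with $X_0(M(-,r)) \subset X_0(M) \cap V(m_2^{\pm p}l_2^q - 1)$, i.e.\ the filling character variety sits inside the intersection of a fixed surface (well, $X_0(M)$ has dimension two since $M$ has two cusps) with the filling hypersurface $\mathcal{H}(r)$. Applying the peripheral map $i^*$ and taking closures, $A_0(M(-,r))$ lands inside the closure of $\varpi_1\big(A_0(M) \cap \mathcal{H}(r)\big)$, where here $\mathcal{H}(r) \subset \C^4(m_1,l_1,m_2,l_2)$ uses $x = m_2$, $y = l_2$. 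The containment of sets follows because $i^*$ and the projection commute appropriately with the defining relation $m_2^{\pm p}l_2^q = 1$, which only involves the second-cusp coordinates.

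Next I would pin down the dimensions. Since $M$ is hyperbolic, $A_0(M)$ is an irreducible surface in $\C^4$; since $M(-,r)$ is hyperbolic, $A_0(M(-,r))$ is a curve in $\C^2(m_1,l_1)$ for all but finitely many $r$ (by Thurston's Dehn surgery theorem and the dimension statements cited just before the lemma). The key point is that $\varpi_i(A_0(M))$ is positive-dimensional: if $\varpi_1(A_0(M))$ were a point, then the surface $A_0(M)$ would lie in $P \times \C^2(m_2,l_2)$, forcing $A_0(M(-,r)) \subset \varpi_1(\text{that}) = P$, contradicting that it is a curve; the same argument (using that $A_0(M(-,r))$ surjects onto something positive-dimensional, or just that $A_0(M)$ is a genuine surface not contained in a product with a point factor) handles $\varpi_2$. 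So $\dim \varpi_i(A_0(M)) > 0$ for $i=1,2$, which is exactly the hypothesis needed to invoke Lemma~\ref{lemma:V(r)curves}: it gives that $A_0(M) \cap \mathcal{H}(r)$ is a finite union of curves for all but finitely many $r$.

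Finally, having established that $A_0(M) \cap \mathcal{H}(r)$ is a finite union of curves, I would argue that $A_0(M(-,r))$ is one of the curve components of the closure of $\varpi_1(A_0(M) \cap \mathcal{H}(r))$. We already know $A_0(M(-,r))$ is contained in that closure; since $A_0(M(-,r))$ is itself an irreducible curve and $\varpi_1(A_0(M) \cap \mathcal{H}(r))$ is a finite union of (images of) curves — hence its closure is a finite union of curves and points by Proposition~\ref{prop:mapsarenice} applied to $\varpi_1$ restricted to each component — an irreducible curve sitting inside a one-dimensional set must be one of its one-dimensional components. The main obstacle, and the step requiring the most care, is the dimension-count ruling out $\varpi_i(A_0(M))$ being a point together with the book-keeping that $i^*$ actually sends $X_0(M) \cap V(m_2^{\pm p}l_2^q-1)$ onto (a dense subset of) $A_0(M) \cap \mathcal{H}(r)$ rather than something smaller; one must check that the relation $m_2^{\pm p}l_2^q = 1$ is not lost under restriction to the peripheral torus, which is clear since $m_2, l_2$ are already peripheral coordinates. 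Everything else is a routine application of the lemmas already in hand.
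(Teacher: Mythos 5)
Your proposal follows the paper's proof almost step for step: Lemma~\ref{lemma:Xcharvarinclusion} gives the containment at the level of $X_0$, pushing forward by $i^*$ and then $\varpi_1$ and taking closures places $A_0(M(-,r))$ inside the closure of $\varpi_1(A_0(M)\cap\mathcal{H}(r))$, and Lemma~\ref{lemma:V(r)curves} is the tool for the ``finite union of curves'' statement. The one place where your argument does not close is the positive-dimensionality of $\varpi_2(A_0(M))$ --- and this is the projection that actually matters for Lemma~\ref{lemma:V(r)curves}, since the degenerate case excluded there is precisely $A_0(M)\subset \C^2(m_1,l_1)\times P$ for a point $P$ in the $(m_2,l_2)$-plane. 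Your first suggested justification (that $A_0(M(-,r))$ is a positive-dimensional image) only controls $\varpi_1$: the curve $A_0(M(-,r))$ lives in $\C^2(m_1,l_1)$ and says nothing about how $A_0(M)$ spreads out in the $(m_2,l_2)$ coordinates. Your second suggestion, that ``$A_0(M)$ is a genuine surface not contained in a product with a point factor,'' is exactly the assertion to be proved: a priori an irreducible surface in $\C^4$ can perfectly well be (dense in) $\C^2\times\{P\}$, and the dimension count alone does not forbid this for $A_0(M)$.

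The paper closes this by symmetry: run the identical containment argument with the roles of the cusps reversed. For some $r'$ the filling $M(r',-)$ of the first cusp is hyperbolic, so $A_0(M(r',-))$ is a curve contained in $\varpi_2(A_0(M))$, whence $\dim_\C\varpi_2(A_0(M))>0$. With that one observation added in place of your parenthetical, your proof is the paper's proof; the remaining step (an irreducible curve contained in a one-dimensional closed set is one of its components) is fine as you state it.
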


\begin{proof} 

The group $\pi_1(M(-,r))$ is a quotient of $\pi_1(M)$; the group presentation only has the additional relation that $M_2^pL_2^q=1$, where $r=p/q$.  Using the notation established earlier in this section, in the construction of $X(\partial M)$ and $A( M(-,r))$ the $(1,1)$ entry of $\rho(M_i)$  is identified with $m_i$, and the $(1,1)$ entry for $\rho(L_i)$ is identified with $l_i$.  The manifestation of this group quotient on the character varieties is that   $X_0(M(-,r))$ is  contained in the intersection of $X_0(M)$ and  $V(m_2^{\pm p}l_2^q-1)$, as shown in Lemma~\ref{lemma:Xcharvarinclusion}. 

The epimorphism $\pi_1(M)\rightarrow \pi_1(M(-,r))$ surjects $\pi_1(\partial M)$
onto $\pi_1(\partial M(-,r))$ and this induces maps 

\[ X(M) \overset{i^*}{ \rightarrow} X(\partial M) \overset{\varpi_1}{\rightarrow} X(\partial M(-,r)).\]

The set $X(\partial M)$ is dense in  $\C^4(m_1,l_1,m_2,l_2)$ and the set $X(\partial M(-,r))$ is contained in $\C^2(m_1,l_1)$.

Since $X_0(M(-,r)) \subset X_0(M) \cap V(m_2^{\pm p}l_2^q-1)$, it follows that  
\[ 
i_*(X_0(M(-,r))) \subset i_*(X_0(M)) \cap i_*(V(m_2^{\pm p}l_2^q-1))).
\]
Therefore, $i_*(X_0(M(-,r)))  \subset A_0(M) \cap V(m_2^{\pm p}l_2^q-1)$ in $\C^4$.  Finally,  since  $A_0(M(-,r))$ is the closure of $\varpi_1(i_*(X_0(M(-,r))))$, by projecting we conclude that 
$ A_0(M(-,r)) $ is contained in (the closure of) $\varpi_1(A_0(M) \cap V(m_2^{\pm p}l_2^q-1))$, which is  $\varpi_1(A_0(M) \cap \mathcal{H}(r)).$

To show that $A_0(M) \cap \mathcal{H}(r)$ is a finite union of curves, 
by Lemma~\ref{lemma:V(r)curves} it suffices to show  $\dim_{\C}(\varpi_i(A_0(M)))>0$ for $i=1,2$.     Since $M(-,r)$ is hyperbolic and has exactly one cusp $A_0(M(-,r))$ is a curve.   From the above,    \[ A_0(M(-,r)) \subset \varpi_1(A_0(M) \cap \mathcal{H}(r)) \subset \varpi_1(A_0(M))\]   which  implies that 
$\dim_{\C} \varpi_1(A_0(M))>0$.   By symmetry (filling the other cusp), $\dim_{\C} \varpi_2(A_0(M))>0$ as well.

\end{proof}

\begin{figure}[h!]
\vspace{-0.5cm}
\begin{center}
\includegraphics[scale=.4]{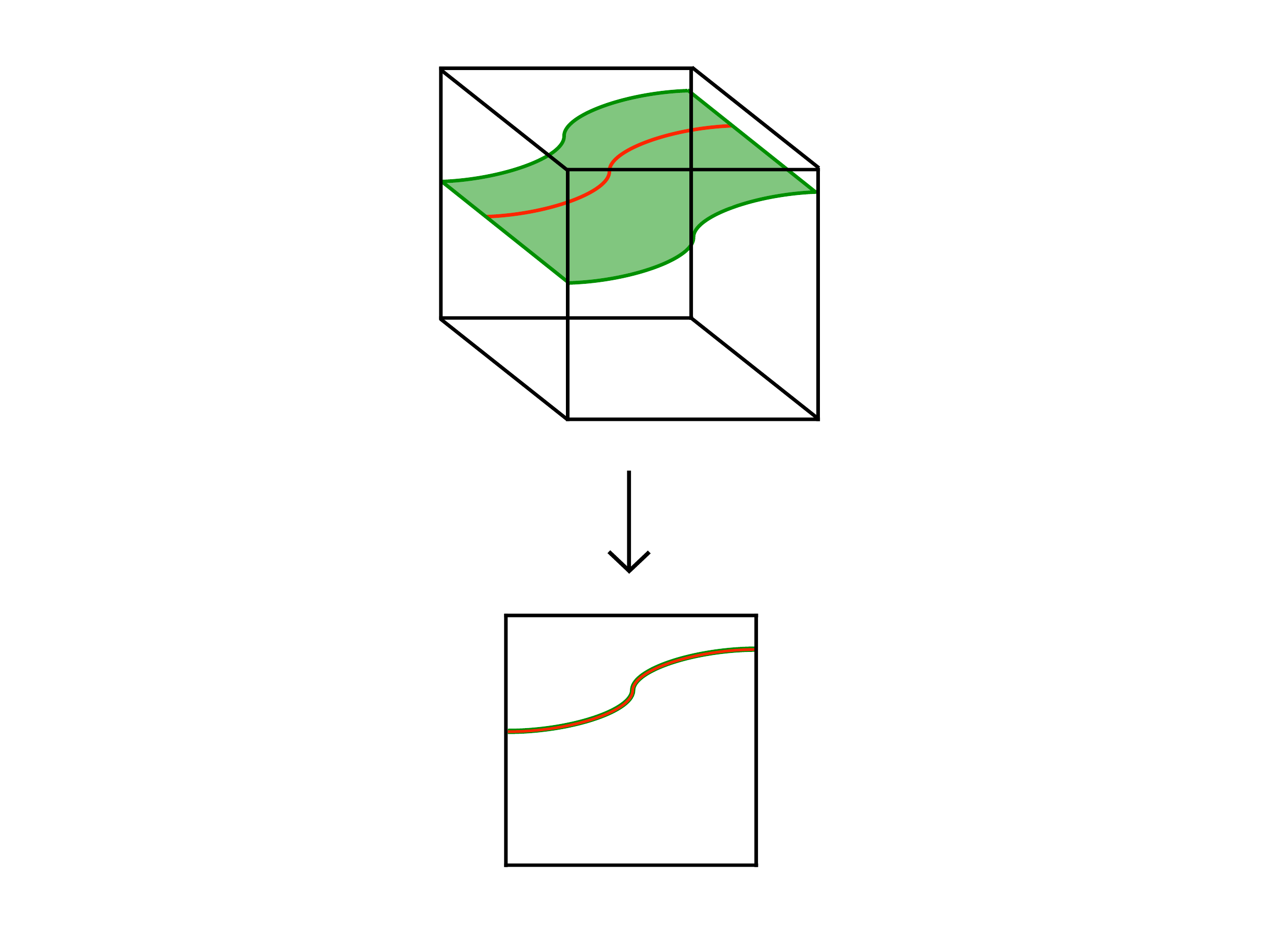}
\end{center}
\vspace{-0.8cm}
\caption{The set $A_0(M)\subset \C^4(m_1,l_1,m_2,l_2)$ with the intersection $A_0(M)\cap  \mathcal{H}(r)$ and the  projection $\varpi_1(A_0(M)) \subset \C^2(m_1,l_1)$ when the first cusp is geometrically isolated from the second. The set $A_0(M(r))\subset \varpi_1(A_0(M))$.}
\label{fig:ProjectionGeoIso}
\end{figure}

The following lemma shows that if the first cusp of $M$ is geometrically isolated from the second cusp of $M$ then there are only finitely many $A_0(M(-,r))$.   We conclude that in this situation,  the gonality of any $A_0(M(-,r))$ is bounded independently of $r$.

\begin{lemma}\label{lemma:geoisosamecurve}
Assume that  the first cusp of $M$ is geometrically isolated from second cusp of $M$.  For all but finitely many $r$ such that $A_0(M(-,r))\subset \varpi_1(A_0(M))$,  all of the  curves $A_0(M(-,r))$ are identical.
\end{lemma}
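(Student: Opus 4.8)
The plan is to exploit Lemma~\ref{lemma:geoiso}: since the first cusp is geometrically isolated from the second, $\varpi_1(A_0(M))$ is a curve, call it $C_1 \subset \C^2(m_1,l_1)$. By Lemma~\ref{lemma:containment}, for all but finitely many $r$ the variety $A_0(M(-,r))$ is a curve component of the closure of $\varpi_1(A_0(M) \cap \mathcal{H}(r))$, and in particular $A_0(M(-,r)) \subseteq \varpi_1(A_0(M)) = C_1$. So we have infinitely many curves $A_0(M(-,r))$, each of which is a subvariety of the \emph{fixed} curve $C_1$. The key observation is that an irreducible curve has only itself and finitely many points as closed subvarieties, so any irreducible component of $C_1$ that contains one $A_0(M(-,r))$ as a positive-dimensional subset must actually equal it.

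Here is how I would carry it out. First, decompose $C_1$ into its finitely many irreducible components $C_1 = D_1 \cup \dots \cup D_k$. For each $r$ in the cofinite set supplied by Lemma~\ref{lemma:containment}, $A_0(M(-,r))$ is an irreducible curve (it is a canonical component of $A(M(-,r))$, hence irreducible, and has dimension one since $M(-,r)$ is one-cusped and hyperbolic) contained in $C_1$. An irreducible curve contained in $C_1$ must be contained in one of the $D_j$ (since it is irreducible and $C_1$ is a finite union of closed sets). But $D_j$ is itself an irreducible curve, so an irreducible closed subvariety of $D_j$ is either all of $D_j$ or a finite set of points; since $A_0(M(-,r))$ is one-dimensional, we get $A_0(M(-,r)) = D_j$ for some $j$. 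Thus every $A_0(M(-,r))$ (for $r$ in the cofinite set) equals one of the finitely many curves $D_1,\dots,D_k$.

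To finish, I would observe that this already forces all but finitely many of the $A_0(M(-,r))$ to be \emph{equal to each other}: there are only $k$ possible values $D_1,\dots,D_k$, but one can say more. In fact, $A_0(M(-,r))$ is the closure of the image of $X_0(M(-,r))$, which contains the distinguished component carrying discrete faithful characters of $M(-,r)$; since for all but finitely many $r$ the component $X_0(M(-,r))$ lies in a fixed $X_0(M)$ (as recalled in the excerpt), and the filling characters accumulate at the discrete faithful character of $M$ inside $A_0(M)$, the corresponding projections $A_0(M(-,r))$ must all pass through a common neighborhood of the projected discrete faithful character of $M$ in $C_1$. Since they are each equal to one of the finitely many irreducible components $D_j$ of $C_1$ through that point, and those that pass through a fixed point of $C_1$ and are one-dimensional components must coincide with the (finitely many, but generically unique) component of $C_1$ through that point, we conclude that all but finitely many of the curves $A_0(M(-,r))$ are identical, as claimed.

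The main obstacle, I expect, is the bookkeeping around ``all but finitely many'': one must be careful to intersect the cofinite set from Lemma~\ref{lemma:containment} (where $A_0(M(-,r))$ is a curve component of $\varpi_1(A_0(M)\cap\mathcal{H}(r))$) with the cofinite set (from the Dehn surgery discussion) where $X_0(M(-,r)) \subset X_0(M)$, and to verify that on this intersection $A_0(M(-,r))$ really is positive-dimensional and irreducible so that the ``subvariety of an irreducible curve'' dichotomy applies. A secondary subtlety is whether $\varpi_1(A_0(M))$ being a curve depends on the choice of $A_0(M)$; the excerpt already notes this dimension is well-defined (it is a curve iff $\varpi_1(B_0(M))$ is), so this is not a genuine difficulty, but it should be invoked explicitly.
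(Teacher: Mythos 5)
Your overall route---a curve contained in a fixed curve must coincide with a component of it---is essentially the paper's, but the paper's proof is a one-liner precisely because of a fact you never use: $\varpi_1(A_0(M))$ is \emph{irreducible}. The canonical component $A_0(M)$ is irreducible by definition, and Lemma~\ref{lemma:irreducible} (which the paper invokes in the paragraph just before Lemma~\ref{lemma:containment}) says that the closure of the image of an irreducible set under a dominant rational map is irreducible. Hence your curve $C_1=\overline{\varpi_1(A_0(M))}$ has exactly one irreducible component, i.e.\ $k=1$ in your decomposition, and every one-dimensional irreducible closed subset of it equals $C_1$ itself. All the $A_0(M(-,r))$ are therefore literally the same curve, with no further argument required.

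Because you allow $C_1$ to be reducible, your argument as written only yields that each $A_0(M(-,r))$ is one of finitely many curves $D_1,\dots,D_k$, which is strictly weaker than the lemma's conclusion that they are all identical. Your attempted patch via the accumulation of Dehn filling characters at the discrete faithful character does not close this gap: what the accumulation gives is that for each index $j$ occurring for infinitely many $r$, the closed curve $D_j$ contains (a sequence converging to, hence) the projected discrete faithful character $P_0$; but two distinct irreducible components of a reducible curve can both pass through $P_0$, so nothing forces the infinitely-occurring $D_j$'s to coincide. Your own parenthetical ``finitely many, but generically unique'' concedes exactly this. So there is a genuine gap in the final step, though it is repaired in one line by citing Lemma~\ref{lemma:irreducible} to get $k=1$.
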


\begin{proof}
 Lemma~\ref{lemma:containment} implies that for all but finitely many $r$, $A_0(M(-,r))$ is contained in $\varpi_1(A_0(M) \cap \mathcal{H}(r))$.  For all such $r$, $A_0(M(-,r))$ is a curve and so is $\varpi_1(A_0(M))$ by Lemma~\ref{lemma:geoiso}.  It follows that they must be the same curve.
\end{proof}

\begin{remark} A similar statement applies to $B_0(M)$.
If there is a single canonical component $Y_0(M)$ then Lemma~\ref{lemma:geoisosamecurve} shows that for all but finitely many $r$ the curves $B_0(M(-,r))$ are the same curve. 
\end{remark}

\subsection{Gonality Relations}\label{section:GonalityRelations}

Now we show that if the first cusp of $M$ is not geometrically isolated from the second cusp of $M$, in order to bound the gonality of $A_0(M(-,r))$ it suffices to bound the gonality of $A_0(M) \cap \mathcal {H}(r)$.

\begin{figure}[h!]
\begin{center}
\includegraphics[scale=.4]{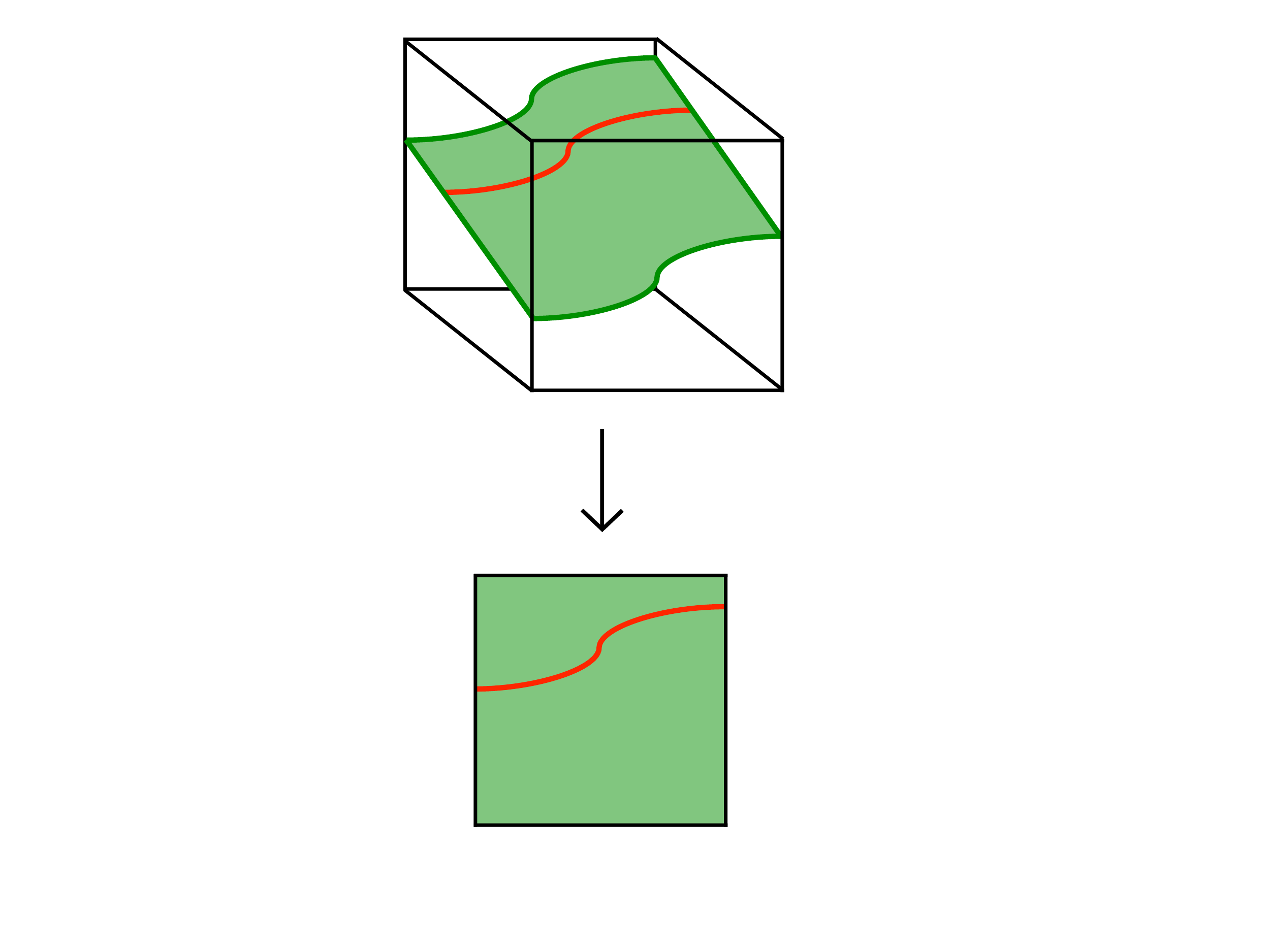}
\end{center}
\vspace{-1cm}
\caption{The set $A_0(M)\subset \C^4(m_1,l_1,m_2,l_2)$ with the intersection $A_0(M)\cap  \mathcal{H}(r)$ and the  projection $\varpi_1(A_0(M)) \subset \C^2(m_1,l_1)$ when the first cusp {\em is not} geometrically isolated from the second. The set $A_0(M(r))\subset \varpi_1(A_0(M))$.}
\label{fig:ProjectionNonGeoIso}
\end{figure}

\begin{lemma}\label{lemma:firstprojection}
Assume that  the first cusp of $M$ is not geometrically isolated from the second cusp of $M$.  Then for 
all but finitely many $r$ such that $A_0(M(-,r))  \subset \varpi_1(A_0(M))$ the set $A_0(M) \cap \mathcal {H}(r)$ is a finite union of curves.  Additionally,  for
any irreducible curve component $C$ of $A_0(M) \cap \mathcal {H}(r)$ with 
 $A_0(M(-,r))$ contained in the closure of $\varpi_1(C)$ 
\[ \gamma(C) \sim \gamma(A_0(M(-,r))).\]
\end{lemma}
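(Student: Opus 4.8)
The plan is to deduce this from Lemma~\ref{lemma:key1} together with the structural results already in hand. First I would recall from Lemma~\ref{lemma:geoiso} that, because the first cusp of $M$ is \emph{not} geometrically isolated from the second, the projection $\varpi_1(A_0(M))$ is not a curve; by Lemma~\ref{lemma:containment} its dimension is positive, so $\varpi_1(A_0(M))$ is dense in $\C^2(m_1,l_1)$. Symmetrically, since $M(-,r)$ is hyperbolic for the $r$ under consideration, $\dim_\C \varpi_2(A_0(M))>0$, so the surface $V=A_0(M)$ meets both hypotheses $\dim_\C\varpi_i(V)>0$ of Lemmas~\ref{lemma:V(r)curves}--\ref{lemma:key1}. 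Lemma~\ref{lemma:V(r)curves} then gives that $A_0(M)\cap\mathcal H(r)$ is a finite union of curves for all but finitely many $r$, which is the first assertion.

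Next I would split on whether $\varpi_2(A_0(M))$ is a curve or is dense in $\C^2$. If $\varpi_2(A_0(M))$ is dense, then Lemma~\ref{lemma:key1} applies directly to $V=A_0(M)$ and yields $\gamma(C)\sim 1$ for every irreducible curve component $C$ of $A_0(M)\cap\mathcal H(r)$, with implied constant depending only on $A_0(M)$ (hence only on $M$). If instead $\varpi_2(A_0(M))$ is a curve, then Lemma~\ref{lemma:key2} applies and again gives $\gamma(C)\sim 1$. Either way, every curve component $C$ of $A_0(M)\cap\mathcal H(r)$ has $\gamma(C)\sim 1$, uniformly in $r$.

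It then remains to compare $\gamma(A_0(M(-,r)))$ to $\gamma(C)$ for the specified component $C$ (the one with $A_0(M(-,r))$ in the closure of $\varpi_1(C)$). By Lemma~\ref{lemma:containment}, $A_0(M(-,r))$ is a curve component of the closure of $\varpi_1(A_0(M)\cap\mathcal H(r))$, so it is the closure of $\varpi_1(C)$ for one such component $C$; the restriction $\varpi_1\!\!\mid_C : C\dashrightarrow A_0(M(-,r))$ is therefore a dominant rational map of irreducible curves. Its degree is bounded by $\deg(\varpi_1\!\!\mid_{A_0(M)})$ by Proposition~\ref{prop:mapsarenice}(2) for all but finitely many components $C$, hence is finite and independent of $r$. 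Lemma~\ref{lemma:abstractgonalitybounds} applied to this map gives $\gamma(A_0(M(-,r)))\leq \gamma(C)\leq d\cdot\gamma(A_0(M(-,r)))$, i.e. $\gamma(C)\sim\gamma(A_0(M(-,r)))$, as desired.

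I expect the main subtlety to be the bookkeeping at ``for all but finitely many $r$'': one must ensure that the finite exceptional sets coming from Lemma~\ref{lemma:V(r)curves}, from Proposition~\ref{prop:mapsarenice}(2) applied to $\varpi_2\!\!\mid_{A_0(M)}$ (inside Lemma~\ref{lemma:key1}), and from Proposition~\ref{prop:mapsarenice}(2) applied to $\varpi_1\!\!\mid_{A_0(M)}$ are all absorbed into a single finite exceptional set, and that for the remaining $r$ the hypothesis $A_0(M(-,r))\subset\varpi_1(A_0(M))$ together with $C$ being the correct component really does make $\varpi_1\!\!\mid_C$ dominant onto $A_0(M(-,r))$ with controlled degree. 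The geometric input (isolation $\Leftrightarrow$ dimension of the projection) is already packaged in Lemma~\ref{lemma:geoiso}, so the argument is essentially an assembly of the preceding lemmas.
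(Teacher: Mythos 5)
Your proposal is correct and follows essentially the same route as the paper: non-isolation plus Lemma~\ref{lemma:geoiso} makes $\varpi_1\!\mid_{A_0(M)}$ a dense map of surfaces of some finite degree $d$ independent of $r$, the restriction $\varpi_1\!\mid_C$ onto $A_0(M(-,r))$ then has degree at most $d$ (the paper gets this from the pointwise preimage bound in Proposition~\ref{prop:mapsarenice}(3) rather than part (2), a cosmetic difference), and Lemma~\ref{lemma:abstractgonalitybounds} finishes. Your middle paragraph invoking Lemmas~\ref{lemma:key1} and~\ref{lemma:key2} to get $\gamma(C)\sim 1$ is not needed for this lemma — that step belongs to the proof of Theorem~\ref{theorem:1}, where it is combined with the present comparison.
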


\begin{proof}  It suffices to prove the lemma for a single irreducible component  $A_0(M)$ whose projection contains infinitely many $A_0(M(-,r))$. 
By Lemma~\ref{lemma:abstractgonalitybounds} it suffices to show that with the exception of finitely many $r$, the degree of $\varpi_1$ restricted to 
$A_0(M)\cap \mathcal{H}(r)$ is independent of $r$.

By  Lemma~\ref{lemma:geoiso}   since  the first cusp of $M$ is not geometrically isolated from the second cusp of $M$, the map $\varpi_1:A_0(M)\rightarrow \C^2$ is dense.   By Proposition~\ref{prop:mapsarenice} this 
is a dense map between surfaces of degree $d$.  Moreover, for all but finitely many points in the image of $\varpi_1(A_0(M))$ the pre image of a point consists of at most $d$ points in $A_0(M)$.  The image $\varpi_1(A_0(M))$ is $\C^2$ with the possible exception of finitely many  curves and points.  By  Lemma~\ref{lemma:containment} except for  finitely many points, $A_0(M(-,r))$ is contained in the image of $\varpi_1(A_0(M))$.   

Therefore, for all but finitely many points $P$ on any curve $A_0(M(-,r))$, the number of pre-images  of $P$ under $\varpi_1$ is at most $d$. 
 As a result, if $C\subset \big( A_0(M) \cap \mathcal {H}(r) \big)$ is an irreducible component such that  $A_0(M(-,r))$ is contained in the closure of $\varpi_1(C)$, the map $\varpi_1 \mid_C$ is degree at most $d$.  Since $d$ independent of $r$, we have $\gamma(C) \sim \gamma(A_0(M(-,r)))$.

\end{proof}

\subsection{Proof of Theorem~\ref{theorem:1} }

\begin{proof}[Proof of Theorem~\ref{theorem:1}] To prove Theorem
  ~\ref{theorem:1} it follows from
  Proposition~\ref{prop:relatedvarieties} that it suffices to bound
  the gonality of $A_0(M(-,r))$.  For all but finitely many $r,$
  $X_0(M(-,r))$ is contained in some $X_0(M)$.  Therefore, for all but
  finitely many $r,$ $A_0(M(-,r))$ is contained in some
  $\varpi_1(A_0(M))$.  As there are only finitely many $A_0(M)$, it
  suffices to consider all $r$ such that $A_0(M(-,r))$ is contained in
  a fixed $\varpi_1(A_0(M))$.

Assume that neither cusp of $M$ is geometrically isolated from the other. By Lemma~\ref{lemma:geoiso}, $A_0(M)$ is a surface in $\C^4$ whose image under both $\varpi_1$ and $\varpi_2$ is dense in $\C^2$.  By Lemma~\ref{lemma:firstprojection} $A_0(M(-,r))$ is contained in the closure of  $\varpi_1(C)$ for some irreducible curve component $C$ of $A_0(M) \cap \mathcal {H}(r)$, and $\gamma(C) \sim \gamma(A_0(M(-,r))$.  By Lemma~\ref{lemma:key2} since $C \subset A_0(M) \cap \mathcal{H}(r)$ and $\varpi_2(A_0(M))$ is dense in $\C^2$ it follows that  $\gamma(C) \sim 1$.  By the transitivity of $\sim$ we conclude that $\gamma(A_0(M(-,r)))\sim 1$.

Now we consider the case where there is one cusp geometrically isolated from another; one of the projections $\varpi_i(A_0(M))$ is a curve by Lemma~\ref{lemma:geoiso}.

Assume that $\varpi_1(A_0(M))$ is a curve, $D$, and $A_0(M(-,r))\subset \varpi_1(A_0(M))$. That is assume the first cusp is geometrically isolated from the second cusp.  By Lemma~\ref{lemma:containment}, for all but finitely many $r$,  $A_0(M(-,r))$ is  a curve component of the closure of $\varpi_1(A_0(M) \cap \mathcal{H}(r))$.   We conclude that for  all but finitely many of the $r$ above, $\varpi_1(A_0(M) \cap \mathcal{H}(r))=D$ as well, since  this projection is a union of curves contained in $D$.   (In this case the map $\varpi_1\mid_{A_0(M)}$ does not have finite degree  and therefore we cannot conclude that $\gamma(A_0(M(-,r))) \sim \gamma(A_0(M) \cap \mathcal{H}(r))$.) We conclude that $\gamma(A_0(M(-,r))$ is bounded independent of $r$ since there are only finitely many such curves.

Now, assume that $\varpi_1(A_0(M))$ is dense in $\C^2$ and  the closure of $\varpi_2(A_0(M))$ is a curve.  By Lemma~\ref{lemma:geoiso}, this occurs when the second cusp is geometrically isolated from the first cusp, but the first cusp is not geometrically isolated from the second cusp.  By Lemma~\ref{lemma:firstprojection}, 
 for any irreducible component $C$ of $A_0(M) \cap \mathcal{H}(r)$ with $A_0(M(-,r))$ contained in the closure of $\varpi_1(C)$,  
 \[\gamma (C)  \sim \gamma(A_0(M(-,r))).\]
Therefore, it suffices to bound $\gamma(C)$. By Lemma~\ref{lemma:key2},  $\gamma( C)\sim 1$.  (In this case the map $\varpi_2 \mid_{A_0(M)}$ does not have finite degree.)
\end{proof}

\begin{remark}

The proof of Theorem~\ref{theorem:1} implies that if the first cusp is geometrically isolated from the second, then $\varpi_1(A_0(M))$ is an irreducible curve and the curve  $A_0(M(-,r))$ is the same for all $r$ such that $\varpi_1(A_0(M))$  contains $A_0(M(-,r))$. A similar statement is true for $B_0(M(-,r))$. As there are only two characters of discrete faithful representations in $Y_0(M)$, there are at most two $B_0(M)$ and at most two projections.

It is possible that all $A_0(M(-,r))$ are the same if the second cusp is geometrically isolated from the first cusp.  This will occur if $\varpi_2(A_0(M) \cap \mathcal{H}(r))$ contains one of the common intersection points of the $\mathcal{C}(r)=\varpi_2(\mathcal{H}(r))$.  If not, we will show that the degree of $\varpi_1(A_0(M) \cap \mathcal{H}(r))$   is bounded by a constant because these curves are specializations of $A_0(M)$ at particular $(m_2,l_2)$ values. 
 
  \end{remark}

\begin{remark}   By Proposition~\ref{prop:relatedvarieties}  the bounds in Theorem~\ref{theorem:1} translate to bounds for other sets as well.  We obtain an analogous statement for  each of the sets
\[   A_0(M(-,r)),  B_0( M(-,r)), E_0(M(-,r)), X_0(M(-,r)), Y_0(M(-,r)). \]
For a given  $r$ such that $M(-,r)$ is hyperbolic, the correspondence between two such sets is established by a mapping between the sets.  Even though this is ambiguous, any lift can be traced back to $Y_0$ so there is a correspondence between different lifts.  
\end{remark}

\begin{remark}

 Assume that $M$ has two cusps neither of which is geometrically isolated from the other. (We follow  \cite{MR1426004} and use the notation established above.) With $r=p/q$, the core curve of the second cusp of $M(-,r)$ is given by $\gamma=\mu_2^{-b}\lambda_2^a$ with $ap+qb=1$.  We choose a parameter $T$ so that $T^{-q}=m_2$ and $T^p=l_2$, so that $m_2^pl_2^q=1$. With $\xi$ the (distinguished) eigenvalue of $\rho(\gamma)$, 
\[ \xi = m_2^{-b}l_2^a=T^{bq+ap} = T.\]
 The map  used to  bound  the gonality is 
\[ (m_1,l_1,m_2,l_2) \mapsto (m_2,l_2).\]
The image under this mapping has gonality equal to one as seen by the mapping 
\[ (m_2,l_2) \mapsto (m_2^bl_2^{-a},m_2^{p}l_2^{q}).\]
Using the  parametrization by $T$ this is
\[ (m_2,l_2)=(T^{-q},T^p) \mapsto (T^{-1}, 1).\]
After a final projection onto the first coordinate, since $T=\xi$, we see that the map is given by 
$ (m_1,l_1,m_2,l_2) \mapsto \xi^{-1}.$
Since sending a single variable to its inverse is a birational map, this is birationally equivalent to sending $ (m_1,l_1,m_2,l_2) $ to $\xi$.

\end{remark}

\section{Genus and Degree}\label{section:genusanddegree}

The gonality of a curve is intimately related to both the (geometric) genus of the curve, and the degree of the curve.  Like gonality, the genus is a birational invariant.  The degree of a curve depends on the embedding of the curve.   Assume that $C$ is a curve with a specified embedding into some ambient space.  Let $g$, $d$, and $\gamma$ be the genus, degree, and gonality of $C$, respectively. The Brill-Noether theorem relates the genus and the gonality of a curve. It states that 
\[ \gamma \leq \lfloor \frac{g+3}2 \rfloor.\]
The genus degree formula (the adjunction formula) states that  if $C$ is a smooth plane curve then
\[ g=\tfrac12(d-1)(d-2).\]
Each singularity of order $s$ reduces the right hand side by $\tfrac12s(s-1)$.

With Theorem~\ref{theorem:1} we can establish  bounds for the genus.  We have shown that the gonality is bounded. 
We now consider the variety $A_0(M(-,r))$ which naturally is contained in $\C^2(m,l)$.

\begin{reptheorem}{theorem:degree} {\em
Let $M$ be a finite volume hyperbolic 3-manifold with two cusps. If $M(-,r)$ is hyperbolic, then  there is a  positive  constant $c_1$ depending only on $M$ and the framing of the second cusp such that
\[ d(A_0(M(-,r))) \leq  c_1\cdot h(r).\] 
If one cusp is geometrically isolated from the other cusp, then there is a  positive  constant $c_2$ depending only on $M$ and the framing of the second cusp, such that 
\[ d(A_0(M(-,r))) \leq c_2.\]  }
\end{reptheorem}

\begin{proof}
First, we will bound the degree of (any irreducible component of) the intersection of a surface $X$ in $\C^4(x_1,y_1,x_2,y_2)$ with $\mathcal{H}(r)$.   As the dimension of $\mathcal{H}(r)$ is three and the dimension of $X$ is two, the condition that they intersect in a curve implies that the intersection is proper.  It is a consequence of B\'{e}zout's theorem (see \cite{MR0463157} Theorem I,7.7)  that with $r=p/q$, since $\deg \mathcal{H}(r) =\max\{|p|,|q|\}$, 
the degree of the intersection  is at most 
\[ \deg (X \cap \mathcal{H}(r) )\leq \deg X \cdot \deg \mathcal{H}(r)= \deg X  \cdot \max\{ |p|, |q| \}.\]

For any variety $W\subset \C^4$ the degree of $\varpi_1(W)$ is at most the degree of $W$.  To see this, assume that $W$ intersects a complimentary hyperplane $H$ in the point $P$.  Then $\varpi_1(W)$ intersects $\varpi_1(H)$ in $\varpi_1(P)$.  It suffices to see that we can take $H$ so that $\varpi_1(H)$ is a complimentary hyperplane to $\varpi_1(W)$ and that this respects multiplicities.  

Therefore, with $X=A_0(M)$ by Lemma~\ref{lemma:containment}, $A_0(M(-,r))$ is contained in the closure of $ \varpi_1(A_0(M) \cap \mathcal{H}(r))$.  
 We conclude that
 \[  \deg A_0(M(-,r)) \leq   \deg (\varpi_1(A_0(M) \cap \mathcal{H}(r)))\leq  \deg (A_0(M) \cap \mathcal{H}(r))\]
 and by  B\'{e}zout's theorem
 \[  \deg (A_0(M) \cap \mathcal{H}(r)) \leq  \deg A_0(M)  \max\{ |p|, |q|\}.\]
This implies the first assertion.

Now assume that the second cusp is geometrically isolated from the first cusp. Therefore $\varpi_2(A_0(M))$ is a curve. 
Let $\{ \varphi_i \}$ be a generating set for  $\mathfrak{U}_{A_0(M)}$ where $\varphi_i=\varphi_i(x_1,y_1,x_2,y_2)$.   Then $\mathfrak{U}_{A_0(M)\cap \mathcal{H}(r)} $ is generated by $\{\varphi_i(x_1,y_1,a_r,b_r)\}$ as in the proof of Lemma~\ref{lemma:key2}.  The degree of $A_0(M)\cap \mathcal{H}(r)$ is bounded by the degrees of  these equations. The map $\varpi_1$ is a projection.  It follows that degree of the the image $\varpi_1(A_0(M)\cap \mathcal{H}(r))$ is bounded by the degree (in $x_1$ and $y_1$) of the $\varpi_i(x_1,y_1,x_2,y_2)$.  

If the first cusp is geometrically isolated from the second cusp, the result follows from Lemma~\ref{lemma:geoisosamecurve}.
\end{proof}

We can bound the genus using the genus-degree formula. Specifically, we have the following.

\begin{reptheorem}{theorem:genus}{\em
Let $M$ be a finite volume hyperbolic 3-manifold with two cusps. If $M(-,r)$ is hyperbolic, there is a  positive  constant $c$  depending only on $M$ and the framing of the second cusp such that  \[   g(A_0(M(-,r))) \leq c \cdot h(r)^2.\]}
\end{reptheorem}

\begin{proof} It suffices to consider curves of degree more than two, since the genus of plane curves of smaller degree at most two is bounded. The theorem follows directly from the genus-degree formula which implies that if $C$ is a  plane curve of degree $d$ then the genus of $C$ is at most $\tfrac12(d-1)(d-2)$, which is less than $\tfrac12d^2$. By Theorem~\ref{theorem:degree} there is a constant $c$ such that  $d\leq ch(r)$  so that the genus us bounded by $\tfrac12c^2 h(r)^2$.

\end{proof}

\begin{remark}
For $r=p/q$ with $pq\neq 0$  this is 
\[ g(A_0(M(-,r))) \leq  c \max\{p^2, q^2 \}. \]
\end{remark}

This statement can also be made for $B_0( M(-,r))$.
  These genus bounds do not appear sharp.   In \S~\ref{section:doubletwist} we will show that  the double twist knots the genus is linear in the number of twists in each twist region. Similarly, the genus bounds from \cite{OPTBTNO} are linear in $h(r)$.

\section{Newton Polygons}\label{section:Newton}

Bounds for the genus and the gonality of a plane curve can be obtained from the associated Newton Polygon.  Let $f\in \C[x^{\pm 1}, y^{\pm 1}]$ be an irreducible Laurent polynomial which defines a curve $V(f)$ in $(\C-\{0\})^2$.  Let $\Delta(f)$ be the associated Newton polygon of $f$.  In 1893, Baker related the genus of such a curve to the volume of the Newton Polygon. 

\begin{thm}[Baker] The geometric genus of $V(f)$ is at most the number of lattice points in the interior of $\Delta(f)$. \end{thm}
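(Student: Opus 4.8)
The plan is to deduce Baker's bound from the classical theory of toric surfaces, realizing $V(f)$ as a curve inside the toric surface $X_\Delta$ attached to the Newton polygon $\Delta = \Delta(f)$. First I would recall that the Laurent polynomial $f \in \C[x^{\pm 1}, y^{\pm 1}]$ determines a section of a line bundle $L_\Delta$ on $X_\Delta$, whose closure $\overline{V(f)}$ is a (possibly singular) curve that meets the dense torus $(\C^*)^2$ exactly in $V(f)$. The adjunction formula on the smooth locus gives that the arithmetic genus of $\overline{V(f)}$ equals $\dim H^0(X_\Delta, \calO(K_{X_\Delta} + L_\Delta)) = h^0(X_\Delta, \calO(K_{X_\Delta}) \otimes L_\Delta)$, and the key combinatorial input is that this dimension equals the number of lattice points in the \emph{interior} of $\Delta$. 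This last fact is the toric description of global sections: $H^0(X_\Delta, L_\Delta)$ has a basis indexed by lattice points of $\Delta$, and twisting by the canonical bundle shrinks $\Delta$ to its interior lattice points.

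The key steps, in order, are: (1) pass to a smooth projective toric surface by choosing a regular (smooth) subdivision of the normal fan of $\Delta$, so that $X_\Delta$ is smooth and $\overline{V(f)}$ is a well-defined projective curve; (2) compute the arithmetic genus $p_a(\overline{V(f)})$ via adjunction, expressing it as the number of interior lattice points of $\Delta$ using the lattice-point count for $H^0(K_{X_\Delta} \otimes L_\Delta)$ on the toric surface (this is where Pick-type combinatorics enter); (3) invoke the inequality $g(V(f)) = g_{\mathrm{geom}}(\overline{V(f)}) \leq p_a(\overline{V(f)})$, valid for any irreducible projective curve, since the geometric genus of the normalization is at most the arithmetic genus, with the difference accounting for singularities (including those introduced at the boundary of $X_\Delta$, i.e.\ at points of $\overline{V(f)}$ outside the torus). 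Since we only need an upper bound, these boundary and singular contributions only help us.

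The main obstacle I expect is step (2): carefully justifying that the adjunction computation on the smooth toric surface $X_\Delta$ yields exactly the interior lattice point count, rather than something off by boundary terms. The subtlety is that $\overline{V(f)}$ need not be smooth, and one must argue that $p_a$ is computed by the expected cohomological formula $h^0(\omega_{X_\Delta} \otimes L_\Delta)$ — this follows from the exact sequence $0 \to \omega_{X_\Delta} \to \omega_{X_\Delta} \otimes L_\Delta \to \omega_{\overline{V(f)}} \to 0$ together with $H^1(X_\Delta, \omega_{X_\Delta}) = 0$ (Kodaira vanishing, or directly the rationality of $X_\Delta$), so that $h^0(\omega_{\overline{V(f)}}) = p_a(\overline{V(f)}) = h^0(\omega_{X_\Delta} \otimes L_\Delta)$. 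One then identifies the right-hand side with $\#(\operatorname{Int}\Delta \cap \Z^2)$ via the standard toric dictionary: global sections of $\omega_{X_\Delta} \otimes L_\Delta$ correspond to lattice points $\mathbf{m}$ with $\langle \mathbf{m}, \mathbf{n}_\rho\rangle > -a_\rho$ strictly for every ray $\rho$ of the fan, which is precisely the interior of $\Delta$. Everything else is routine, and since the statement is only an upper bound on the geometric genus, I do not need sharp control of the singularities — I merely need the comparison $g_{\mathrm{geom}} \leq p_a$.
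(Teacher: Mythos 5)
The paper does not prove this statement: it quotes Baker's 1893 theorem as a known classical result and uses it as a black box (in the proof of Theorem 8.3), so there is no in-paper argument to compare against. Your proposal is the standard modern proof, essentially Khovanskii's toric-geometry argument, and its outline is sound: compactify $V(f)$ in the toric surface of $\Delta(f)$, identify $h^0(\omega_X \otimes L_\Delta)$ with the interior lattice points of $\Delta$, and conclude via $g_{\mathrm{geom}} \leq p_a$. Three points deserve explicit care if you write this up. First, you need the divisor of the section of $L_\Delta$ defined by $f$ to be exactly $\overline{V(f)}$ with no toric boundary components; this holds precisely because $\Delta$ is taken to be the actual Newton polygon of $f$ (the multiplicity of the section along the boundary divisor $D_\rho$ is the amount by which $\Delta(f)$ fails to reach the corresponding facet of $\Delta$, which is zero here). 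Second, to get $H^0(\omega_X \otimes L_\Delta) \cong H^0(\omega_{\overline{V(f)}})$ from your exact sequence you need $H^0(\omega_X) = 0$ as well as $H^1(\omega_X) = 0$; both follow from rationality of the (smooth) toric surface, but only the second is mentioned. Third, the identification $h^0(\omega_C) = p_a(C)$ via Serre duality requires $C$ reduced and connected, which is where the hypothesis that $f$ is irreducible (as stated in the paper's setup) enters; and when you resolve $X_\Delta$ you should either work with the dualizing sheaf on the singular toric surface directly or note that the interior lattice point count of $\Delta$ is unaffected by the fan refinement and that passing to the strict transform can only decrease the arithmetic genus, which is harmless since you only want an upper bound. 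With those caveats the argument is correct.
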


Recently, Castryck and Cools showed a similar relationship for the gonality of $V(f)$.   A $\Z$-affine transformation is a map from $\R^2$ to $\R^2$ of the form $x\mapsto Ax+b$ where $A\in \GL_2(\Z)$ and $b\in \Z^2$.  We say that two lattice polygons $\Delta$ and $\Delta'$ are equivalent if there is a $\Z$-affine transformation $\varphi$ such that $\varphi(\Delta)=\Delta'$.  The lattice width of $\Delta$ is the smallest integer $s\geq 0$ such that there is a $\Z$-affine transformation $\varphi$ such that $\varphi(\Delta)$ is contained in the horizontal strip $\{ (x,y)\in \R^2 : 0\leq y \leq s \}$.

\begin{thm}[Castryck-Cools]
The gonality of $V(f)$ is at most the lattice width of $\Delta(f)$.
\end{thm}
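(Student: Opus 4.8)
The plan is to observe that both quantities in the inequality are invariant under $\Z$-affine transformations of the Newton polygon, to pass to a representative of $\Delta(f)$ lying in a horizontal strip of minimal height $s$, and then to exhibit the projection onto the first coordinate as a dominant map of degree at most $s$ to $\PP^1$.

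First I would record the group action underlying the equivalence of lattice polygons. Each $A=\mat{a}{b}{c}{d}\in\GL_2(\Z)$ determines a monomial automorphism $\Phi_A(x,y)=(x^{a}y^{b},x^{c}y^{d})$ of the torus $(\C-\{0\})^2$; under $\Phi_A$ the curve $V(f)$ is carried isomorphically onto a curve defined by a Laurent polynomial $f'$ obtained from $f$ by the induced change of variables, and $\Delta(f')$ is $\Z$-affinely equivalent to $\Delta(f)$. Conversely, every $\Z$-affine transformation of $\Delta(f)$ is realized this way: the $\GL_2(\Z)$-part comes from such a monomial substitution, while a translation corresponds to multiplying $f$ by a monomial, which does not change the vanishing locus $V(f)$ inside the torus. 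Since $\Phi_A$ is an isomorphism of curves, it preserves function fields and hence gonality, so $\gamma(V(f))$ depends only on the $\Z$-affine equivalence class of $\Delta(f)$. The lattice width is, by definition, a minimum over all $\Z$-affine transformations, so it too is an invariant of this class. It therefore suffices to verify the inequality for one convenient representative of $\Delta(f)$.

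Next I would choose the optimal representative. We may assume $\Delta(f)$ is two-dimensional, since if it is a point or a segment then $f$ irreducible forces $V(f)$ to be a single line and the statement is immediate; in particular we may assume the lattice width $s$ satisfies $s\geq 1$. Pick a $\Z$-affine $\varphi$ with $\varphi(\Delta(f))\subset\{(x,y):0\leq y\leq s\}$ and let $\tilde f$ be a corresponding Laurent polynomial with $\Delta(\tilde f)=\varphi(\Delta(f))$, defining a curve isomorphic to $V(f)$. Writing $-k$ for the smallest exponent of $y$ occurring in $\tilde f$, we have $\tilde f=y^{-k}g(x,y)$ with $g\in\C[x^{\pm1}][y]$ and $\deg_y g\leq s$, and since $\Delta(f)$ is two-dimensional, $g$ genuinely involves $y$. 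Now consider the projection $\pi:V(\tilde f)\dashrightarrow\PP^1$, $(x,y)\mapsto x$. Because $g$ involves $y$, the curve $V(\tilde f)$ is not a union of vertical lines, so $\pi$ is dominant; and for generic $x_0\in\C-\{0\}$ the fiber $\pi^{-1}(x_0)$ is the set of $y$ with $g(x_0,y)=0$, which has at most $\deg_y g\leq s$ points. Hence $\deg\pi\leq s$, so $\gamma(V(\tilde f))\leq s$, and by the invariance established above $\gamma(V(f))\leq s$, the lattice width of $\Delta(f)$.

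I do not expect a serious obstacle: the whole content is the interplay between the torus automorphism group $\GL_2(\Z)$ — which acts compatibly on curves and on Newton polygons and preserves gonality — and the elementary fact that the degree of the projection onto the $x$-line is bounded by the $y$-degree of the defining polynomial. The only points needing care are the bookkeeping (clearing monomial factors so that ``$\Delta$ lies in a horizontal strip of height $s$'' translates faithfully into ``$\deg_y$ of the cleared polynomial is at most $s$'') and the disposal of the degenerate case of a one-dimensional Newton polygon. Note that sharpness — the fact that the gonality very often \emph{equals} the lattice width — lies considerably deeper and is not needed for the applications in this paper.
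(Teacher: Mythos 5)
The paper does not prove this statement at all: it is quoted as a result of Castryck and Cools and used as a black box, so there is no internal proof to compare against. Your argument is the standard (and correct) proof of this ``easy direction'' of their theorem: monomial substitutions $\Phi_A$ for $A\in\GL_2(\Z)$ act as automorphisms of the torus, realize every $\Z$-linear change of the Newton polygon while translations correspond to multiplication by units, and hence both the gonality of $V(f)$ and the lattice width of $\Delta(f)$ are invariants of the $\Z$-affine class; after normalizing $\Delta(f)$ into a horizontal strip of height $s$ and clearing the monomial factor, the curve is defined by a polynomial of $y$-degree at most $s$ that genuinely involves $y$, so projection to $x$ is a dominant rational map to $\PP^1$ of degree at most $s$. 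All of this is sound, and it is consistent with the paper's framework (rational gonality, Lemma~\ref{lemma:abstractgonalitybounds}, etc.).

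The one flaw is your disposal of the degenerate case. If $\Delta(f)$ is a segment, then (with the paper's definition) its lattice width is $0$, since every primitive lattice direction can be mapped to $(1,0)$ and the segment then lies in the strip $\{0\le y\le 0\}$; but the curve $V(f)$ is then a coset of a one-dimensional subtorus, isomorphic to $\C^*$, whose gonality is $1$. So in that case the inequality $\gamma(V(f))\le \operatorname{lw}(\Delta(f))$ reads $1\le 0$ and is false --- the statement is not ``immediate'' there, it fails. This is really a missing hypothesis in the theorem as quoted (Castryck--Cools assume $\Delta(f)$ is two-dimensional), and your main argument correctly covers exactly the two-dimensional case; but as written, the sentence claiming the degenerate case is immediate should instead record that the theorem carries the standing assumption $\dim\Delta(f)=2$, or that one must separately note $\gamma=1$ when the polygon degenerates.
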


Culler and Shalen \cite{MR735339} showed that  if $M$ has only one cusp, then $X_0(M)$ determines a norm on $H_1(\partial M, \R)$.  This norm encodes many topological properties of $M$ including information about the Dehn fillings of $M$.  This construction can be applied to other curve components of $X(M)$ yielding  a semi norm in general.   Culler and Shalen, and  Boyer and Zhang \cite{MR1670053}   used these semi norms to study surfaces in 3-manifolds.  Boyer and Zhang also extended this semi norm to $Y(M)$.

We observe the following about non-norm curve components.

\begin{thm}\label{thm:newtnorm}
 Let $M$ be a compact, connected, irreducible, $\partial$-irreducible 3-manifold with boundary  torus.  Let $Y\subset Y(M)$ be a curve which is not a norm curve.  Then  $Y$ is birational to $\C$. 
\end{thm}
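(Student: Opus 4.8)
The plan is to understand what fails when $Y$ is \emph{not} a norm curve, and to show that the only way a component of $Y(M)$ can fail to carry a (non-trivial) norm is if it is essentially a curve of reducible or otherwise very constrained characters, whose natural model is forced to be rational. First I would recall the Culler--Shalen construction: for a curve $Y \subset Y(M)$ and each slope $\alpha \in H_1(\partial M;\Z)$ one forms the regular function $f_\alpha = I_\alpha^2 - 4$ (or its $\PSL_2$ analogue) on the smooth projective model $\widetilde Y$, and defines $\|\alpha\|_Y$ to be the degree of $f_\alpha : \widetilde Y \to \PP^1$; this extends to a seminorm on $H_1(\partial M;\R)$. The function $\alpha \mapsto \|\alpha\|_Y$ vanishes on some subspace, and $Y$ is a norm curve precisely when this seminorm is a genuine norm, i.e. positive definite. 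So the hypothesis is that there is a nonzero $\alpha$ (hence, after scaling, a primitive slope) with $\|\alpha\|_Y = 0$, which by the Culler--Shalen machinery forces $f_\alpha$ to be constant on $Y$: the trace function $I_\alpha$ is constant on $Y$, and in fact (using that $\widetilde Y \to \PP^1$ via $f_\alpha$ has degree zero) the eigenvalue of $\rho(\alpha)$ is constant on a dense subset of $Y$.

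Next I would use the structure theory of $A(M) = B(M)$ and the restriction map $i^* : Y(M) \to B(M)$ from \S 4. Since $M$ has one torus boundary component, $B(M) \subset \C^2(m,l)$ (in $\PSL_2$ coordinates). If $I_\alpha$ is constant on $Y$ and $Y$ is a curve, then $i^*(Y)$ lies in a level set of a trace function, hence $i^*(Y)$ is contained in a curve of the form $\{\,m^a l^b = \zeta\,\}$ (with $\alpha = a\mu + b\lambda$, $\zeta$ a root of unity or $\pm 1$, depending on the exact degeneration). Such a curve is a torus, and its smooth projective model is $\PP^1$; more precisely the parametrization $(m,l) = (T^b, \zeta T^{-a})$ up to units (using $\gcd(a,b)=1$) shows it is birational to $\C$, exactly as in the computation in the final remark of \S 6 and in Lemma~\ref{lemma:keygonality}. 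The remaining point is to promote ``$i^*(Y)$ is rational'' to ``$Y$ is rational'': I would argue that on a norm-degenerate component the restriction map $i^* : Y \to i^*(Y)$ is birational. This is the natural analogue of Dunfield's result (Lemma~\ref{lemma:degrees}\,(1)) that $i^*$ is birational onto its image on canonical components; here one instead uses that a component on which some boundary trace is constant cannot be a component on which the pair of boundary eigenvalue coordinates separates points generically unless it already does so — the irreducibility hypotheses on $M$ ($\partial$-irreducible, irreducible) guarantee $Y$ contains irreducible characters and that the generic character on $Y$ is determined by its restriction to $\pi_1(\partial M)$ together with finitely many other traces which are themselves then functions of the boundary data. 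Then $Y$ is birational to the rational curve $i^*(Y)$, hence birational to $\C$ (a projective rational curve minus at least one point; one checks the model is not all of $\PP^1$, or simply notes ``birational to $\C$'' and ``birational to $\PP^1$'' agree for our purposes, and the cited literature phrases it as $\C$).

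The main obstacle I expect is the last step: carefully justifying that the restriction map $Y \to i^*(Y)$ has degree one on a non-norm curve. One has to rule out the possibility that $Y$ is a genuinely higher-genus curve that happens to map to a rational boundary curve by a map of degree $>1$. The resolution should come from the Culler--Shalen ideal-point / surface machinery combined with $\partial$-irreducibility: if $\|\alpha\|_Y = 0$ then there is no essential surface detected by $Y$ with boundary slope $\alpha$, and in the one-cusped $\partial$-irreducible setting this rigidity, together with Thurston's parametrization of (the canonical and other positive-dimensional) components by boundary holonomy, pins $Y$ down as birational to its image in $B(M)$. I would lean on Boyer--Zhang \cite{MR1670053} for the $\PSL_2$ seminorm and its vanishing locus, and on the dimension count ``$\dim Y = 1 = \dim i^*(Y)$ forces $i^*|_Y$ generically finite, and degree-zero of $f_\alpha$ forces it degree one'' as the crux of the argument, with Lemma~\ref{lemma:irreducible} ensuring $i^*(Y)$ is an irreducible rational curve.
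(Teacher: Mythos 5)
Your reduction to the boundary image runs parallel to the first half of the paper's argument, but by a different route. The paper does not work with the seminorm directly: it quotes Boyer--Zhang (Proposition 5.4(3)) to the effect that a non-norm curve maps to a component of the $A$-polynomial whose Newton polygon is one- or zero-dimensional, and then applies Baker's theorem (geometric genus bounded by the number of interior lattice points, here zero) to conclude genus zero and hence rationality. Your version --- degeneracy of $\|\cdot\|_Y$ forces some $I_\alpha$ constant, so the boundary eigenvalues satisfy $m^a l^b=\zeta$, a binomial curve which is rational as in Lemma~\ref{lemma:keygonality} --- recovers the same information about the \emph{image} $i^*(Y)$ in $B(M)$, and is a reasonable substitute for the Newton-polygon step.

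The genuine gap is exactly the step you flag yourself: passing from rationality of $i^*(Y)$ to rationality of $Y$, and none of your proposed patches closes it. Dunfield's birationality result (Lemma~\ref{lemma:degrees}(1)) is a statement about canonical components of hyperbolic manifolds, i.e.\ components containing a discrete faithful character; those are norm curves, and the theorem here is stated for a general compact, irreducible, $\partial$-irreducible $M$ with no hyperbolicity hypothesis, so that result is inapplicable to the curves in question. The heuristic that ``the generic character on $Y$ is determined by its boundary restriction together with finitely many traces which are themselves functions of the boundary data'' assumes what it is trying to prove. The dimension count ``$\dim Y=\dim i^*(Y)=1$ forces $i^*|_Y$ generically finite'' is true but useless: genus, unlike gonality (Lemma~\ref{lemma:abstractgonalitybounds}), is not bounded above through a finite dominant map onto a rational curve --- every curve of every genus admits a finite map to $\PP^1$. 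And in the fully degenerate case, where $\|\cdot\|_Y\equiv 0$ and all boundary traces are constant, $i^*(Y)$ is a single point and the ``birational to the image'' strategy collapses outright. What does the work in the paper is precisely the Boyer--Zhang citation, which packages the relation between the seminorm, the restriction map, and the Newton polygon; a self-contained argument along your lines would have to reprove that input (i.e.\ actually control the degree of $i^*|_Y$ on a norm-degenerate curve, or bound the genus of $Y$ itself rather than of its image) rather than gesture at it.
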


\begin{proof}

By Boyer and Zhang \cite{MR1670053} (Proposition 5.4 3), if $M$ has one cusp any norm curve of $X(M)$ maps to a component of the $A$-polynomial whose Newton polygon is two-dimensional, and any curve that is not a norm curve maps to a component of the $A$-polynomial that is one-dimensional or zero-dimensional. Therefore, the Newton polygon of a non-norm curve is a line segment or a point.  This has empty interior, so by Baker's theorem, or Castryck and Cools' Theorem, the geometric genus is zero.  It follows that  the curve is birational to $\C$. 
\end{proof}

\section{Examples}

Theorem~\ref{theorem:1} shows that  the gonality of $X_0(M(-,r))$ is bounded independent of $r$.  In general, it is  difficult to  determine the exact gonality of such curves, as explicit computations of character varieties are difficult and only a few infinite families have been computed. In this section, we will compute the gonality of the character varieties associated to two families of manifolds which arise as Dehn surgeries.  These examples will demonstrate that the linear  bounds in Theorem~\ref{theorem:degree} are sharp, and moreover that there are character varieties with arbitrarily large gonality.  All of the filled manifolds in the following examples are knot complements in $S^3$.  For these $M(-,r)$,  $A_0(M(-,r))$ is birational to $X_0(M(-,r))$.

\subsection{Once-punctured torus bundles of tunnel number one}\label{section:torusbundles}

\begin{figure}[h!]
\begin{center}
\includegraphics[scale=.3]{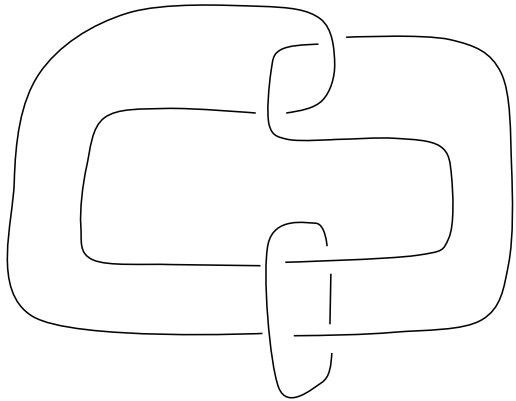}
\end{center}
\caption{The once-punctured torus bundle of tunnel number one, $M_n$,  is $-(n+2)$ surgery on one component of the Whitehead link, shown above.}
\label{fig:whitehead}
\end{figure}

The family of once-punctured torus bundles of tunnel number one is an infinite family of one-cusped manifolds, indexed as $\{M_n\}$ for $n\in \Z$.  These manifolds are hyperbolic for $|n|>2$ and can be realized as integral fillings of one component of the Whitehead link complement, $W$; the manifold $M_n$ is $W(-,-(n+2))$.  

The character varieties of these manifolds were explicitly computed in \cite{OPTBTNO}.   
For each hyperbolic $M_n$ there is a single canonical component in the $\SL_2(\C)$ character variety.   The curves $X_0(M_n)$ are all hyperelliptic curves of genus $\lfloor \tfrac12(|n-1|-1)  \rfloor $ if $n\neq 2 \pmod 4$ and $\lfloor \tfrac12(|n-1|-3)  \rfloor $ if $n\equiv 2 \pmod 4$.  (The variety $X_0(M_3)$ associated to the figure-eight knot sister is a  a rational curve. The variety $X_0(M_{-3})$ associated to the figure-eight knot complement and the variety $X_0(M_6)$ are elliptic curves.)   The discrepancy arising when $n\equiv 2 \pmod 4$ is due to the fact that in this case there are two components of irreducible representations, the canonical component and an affine line component. 

For  this family  $r=p/q=(-(n+2))/1$; the gonality of $X_0(M_n)$ is bounded by 2, and the genus is increasing linearly in $p$. 
Note that Theorem~\ref{theorem:genus} gives an upper bound of the form $c \cdot p^2$
for the genus. The canonical components of the $\PSL_2(\C)$ character variety are all birational to $\C^1$ and we conclude that here both the gonality and the genus are bounded by the constant one.

\subsection{Double Twist Knots}\label{section:doubletwist}

The  double twist knots are the knots $J(k,l)$ shown in the right hand side of  Figure~\ref{fig:threecomponents}.  These knots are the result of $-1/k$ surgery and $-1/l$ surgery on two components of the link shown in the left hand side of Figure~\ref{fig:threecomponents} where $k$ and $l$ are the number of half twists in the boxes.   The link $J(k,l)$ is a knot if $kl$ is even and otherwise is a two component link. Since $J(k,l)$ is ambient isotopic to $J(l,k)$ we may assume that $l=2n$.  The knot complement of $J(k,l)$ is hyperbolic unless $|k|$ or $|l|$ is less than two, or $k=l=\pm 2$.   The knots with $|k|$ or $|l|$ equal to two are the twist knots. These  include the figure-eight knot, $J(2,-2)=J(-2,2)=J(2,3)$ and the trefoil $J(2,2)=J(-2,-2)$. We now restrict our attention to these hyperbolic knots.

Let $Y_0(k,l)$ denote $Y_0(S^3-J(k,l))$, and $X_0(k,l)$ denote $X_0(S^3-J(k,l))$. For a fixed $k_0$,  by Theorem~\ref{theorem:1} 
\[ \gamma(Y_0(k_0,l))\sim c=c(k_0).\] 
A similar statement is true if we fix $l$.

\begin{figure}[h!]
\begin{center}
\includegraphics[scale=.5]{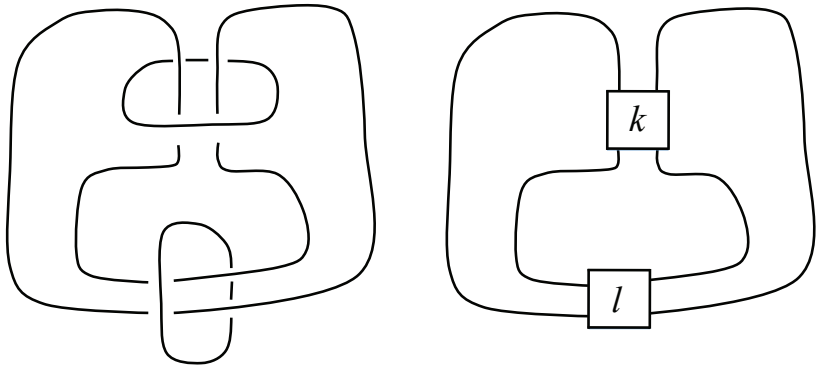}
\end{center}
\vspace{-5cm}
\caption{The link $J(k,l)$ is the result of $-1/k$ and $-1/l$ surgery on the three component link pictured on the left.}
\label{fig:threecomponents}
\end{figure}

The character varieties of these  knot complements  were computed in and analyzed in  \cite{MR2827003}.  The  component of the $\PSL_2(\C)$ character variety of the double twist knot $S^3-J(k,l)$  which contain irreducible representations is birationally equivalent to the curve defined by the following equations in $\PP^1 \times \PP^1$:
\begin{equation*}\tag{$*$}
\begin{aligned}
g_{m'+1}(r)g_{n'}(t) =g_{m'}(r) g_{n'+1}(t) & \ \text{ if } \ k=2m', l=2n'\\
f_{m'+1}(r) g_{n'}(t) = f_{m'}(r) g_{n'+1}(t)  & \ \text{ if } \  k=2m'+1, l=2n'. 
\end{aligned}
\end{equation*}
(The polynomials $f_i$ and $g_i$ were introduced in Definition~\ref{definition:fib}.) These curves are   smooth and irreducible in $\PP^1 \times \PP^1$ when $k\neq l$. When $k=l$ the canonical component is given by $r=t$.  The genus can be computed from the bidegree of these polynomials.
Let $m=\lfloor \tfrac{|k|}2 \rfloor$ and $n=\lfloor \tfrac{|l|}2 \rfloor$ and let $D(k,l)$ be the projective closure of the variety determined by the equations $(*)$ in $\PP^1(r) \times \PP^1(t)$.  
When $k\neq l$ the bidegree is  $(m,n)$.    The genus of $X_0(k,l)$ is determined by studying the ramification of the map from $X_0(k,l)$ to $Y_0(k,l)$. 

\begin{theorem}[\cite{MR2827003} Theorems 6.2 and 6.5]\label{theorem:mpvl}
Let $J(k,l)$ be a hyperbolic with $m=\lfloor \tfrac{|k|}2 \rfloor$.
If   $k\neq l$ the genus of $Y_0(k,l)$ is $(m - 1)(n  - 1)$ and the genus of $X_0(k,l)$ is $3mn-m-n- b$ where 
\[ b= \left\{ \begin{aligned} (-1)^{k+l} &  \ \  \text{  if   } \quad (-1)^{k+l}kl >0 \\ 0 & \quad \text{ otherwise} \end{aligned} \right.\]

The genus  of $Y_0(k,k)$ is zero and the genus of $X_0(k,k)$ is $n-1$. 
\end{theorem}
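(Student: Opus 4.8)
The plan is to extract the genus of $X_0(k,l)$ from the genus of $Y_0(k,l)$ by analyzing the finite covering $X_0(k,l)\to Y_0(k,l)$ coming from the lifting of $\PSL_2(\C)$ representations to $\SL_2(\C)$, and to compute the genus of $Y_0(k,l)$ directly from the defining equations $(*)$ in $\PP^1\times\PP^1$. First I would compute the genus of $Y_0(k,l)$: since the curve $D(k,l)$ cut out by $(*)$ is smooth and irreducible in $\PP^1(r)\times\PP^1(t)$ of bidegree $(m,n)$ when $k\neq l$ (where $m=\lfloor|k|/2\rfloor$, $n=\lfloor|l|/2\rfloor$), the adjunction formula on the surface $\PP^1\times\PP^1$ gives $g = (m-1)(n-1)$ directly, since the canonical class of $\PP^1\times\PP^1$ is $(-2,-2)$ and $\tfrac12\big(D\cdot(D+K)\big) = \tfrac12\big((m,n)\cdot(m-2,n-2)\big) = \tfrac12\big(m(n-2)+n(m-2)\big) = mn-m-n+1 - 1 = (m-1)(n-1)$. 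For the $k=l$ case, the canonical component is the diagonal $r=t$, which is rational, so $Y_0(k,k)$ has genus zero; I would need to double-check that the relevant component here has bidegree $(n,n)$ restricted to the diagonal and hence is isomorphic to $\PP^1$, whence genus $0$.

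Next, for the genus of $X_0(k,l)$, I would use the Riemann–Hurwitz formula applied to the degree-two map $X_0(k,l)\to Y_0(k,l)$ (degree at most $|H^1;\Z/2\Z|$, and for these knot complements in $S^3$ the relevant cover is degree two as in the setup of Lemma~\ref{lemma:degrees}). Writing $\pi: X_0 \to Y_0$ for this double cover and $R$ for its ramification divisor, Riemann–Hurwitz gives
\[
2g(X_0) - 2 = 2\big(2 g(Y_0) - 2\big) + \deg R.
\]
Substituting $g(Y_0) = (m-1)(n-1)$, the claimed value $g(X_0) = 3mn - m - n - b$ forces $\deg R$ to equal a specific number; the heart of the argument is therefore to count the branch points of $\pi$ exactly, and the correction term $b$ (which is $\pm 1$ exactly when $(-1)^{k+l}kl > 0$) records a parity phenomenon in that count. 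I would locate the ramification points of $\pi$ as the characters where the trace of a meridian (or of the relevant peripheral element) takes the values $\pm 2$, i.e. where a sign ambiguity in the square root defining the lift degenerates; concretely these are the intersection points of $D(k,l)$ with the loci $r = \pm 2$ and $t = \pm 2$ in $\PP^1\times\PP^1$, counted with care at the points where the curve meets the boundary $\{r=\infty\}$ or $\{t=\infty\}$. Using the bidegree $(m,n)$, each such vertical or horizontal line meets $D(k,l)$ in a controlled number of points, and evaluating the Chebyshev-type polynomials $f_i, g_i$ at $r = \pm 2$ (where $f_i(\pm 2)$ and $g_i(\pm 2)$ have known closed forms) pins down exactly which of these intersection points are genuine branch points.

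The main obstacle I expect is precisely this exact branch-point count, including the behavior at infinity and the parity correction $b$. The subtlety is twofold: first, whether the double cover is ramified or split over each candidate point depends on whether a certain product of eigenvalue data is a square, and tracking this through the explicit equations $(*)$ requires evaluating $g_{m'}, g_{n'}, f_{m'}$ at the special arguments and comparing signs — this is where the dependence on the parities of $k$ and $l$ and on the sign of $kl$ enters, producing $b$. Second, one must confirm that $X_0(k,l)$ is exactly the preimage component (as opposed to splitting into two components over $Y_0$), which is where the hypothesis that $S^3 - J(k,l)$ is a knot complement (so $H_1$ has no $2$-torsion beyond what is accounted for) and the irreducibility statement for $D(k,l)$ when $k\neq l$ are used. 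Once the ramification divisor degree is computed, plugging into Riemann–Hurwitz yields the stated formula; the $k=l$ case is easier since $Y_0(k,k)\cong\PP^1$ and the double cover of $\PP^1$ branched over the appropriate number of points gives genus $n-1$ by the same Riemann–Hurwitz computation with $g(Y_0)=0$.
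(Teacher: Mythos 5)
This theorem is quoted directly from \cite{MR2827003} (Theorems 6.2 and 6.5) and the paper supplies no proof of its own, but your outline --- adjunction on $\PP^1\times\PP^1$ for the smooth bidegree-$(m,n)$ curve $Y_0(k,l)$, then Riemann--Hurwitz for the degree-two cover $X_0(k,l)\to Y_0(k,l)$ with the correction term $b$ arising from the exact branch-point count at the loci where the peripheral traces are $\pm 2$ --- is precisely the method of the cited reference and the one the paper itself alludes to (``the genus of $X_0(k,l)$ is determined by studying the ramification of the map from $X_0(k,l)$ to $Y_0(k,l)$''). One small slip: adjunction gives $g=\tfrac12\,D\cdot(D+K)+1=mn-m-n+1=(m-1)(n-1)$, whereas your displayed arithmetic drops the $+1$ even though you land on the correct value.
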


We will compute the gonality of these curves by relating the gonality to the degree of the curves.  
In 1883 Noether  \cite{noether} proved that a  non-singular plane curve of degree $d\geq 2$ in $\PP^2$ has gonality equal to  $d-1$, where $d$ is the degree of the curve.
(See \cite{MR555241} Theorem 2.3.1, page 73.)  Certain well-behaved singularities are known to modify this relationship in an understood manner.   The curves determined in \cite{MR2827003} are smooth, but lie in $\PP^1 \times \PP^1$; the birational mapping to $\PP^2$ introduces singularities in the curves.  We now prove a $\PP^1 \times \PP^1$ version of  Noether's theorem by understanding this mapping.

\begin{lemma}\label{lemma:bidegreegonality} 
Let $C$ be a smooth irreducible curve in $\PP^1 \times \PP^1$ of bidegree $(M,N)$ where $MN\neq 0$,   then  $\gamma(C)= \min\{M,N\}$. \end{lemma}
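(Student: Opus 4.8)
The plan is to establish the inequality $\gamma(C)\le\min\{M,N\}$ by exhibiting an explicit low-degree map, and then to establish $\gamma(C)\ge\min\{M,N\}$ by a parity/intersection-theoretic argument. For the upper bound, assume without loss of generality that $M=\min\{M,N\}$, and consider the composition of the inclusion $C\hookrightarrow\PP^1\times\PP^1$ with the projection $\pi_2:\PP^1\times\PP^1\to\PP^1$ onto the second factor. A fiber $\pi_2^{-1}(\mathrm{pt})$ is a curve of bidegree $(1,0)$, which meets $C$ in exactly $M$ points counted with multiplicity (the intersection number of bidegree-$(M,N)$ and bidegree-$(1,0)$ classes on $\PP^1\times\PP^1$ is $M$). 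Hence $\pi_2|_C:C\to\PP^1$ is a dominant morphism of degree $M$, and since $C$ is smooth (so $C$ is its own smooth projective model) this gives a map realizing a degree at most $M=\min\{M,N\}$; therefore $\gamma(C)\le\min\{M,N\}$. Here I would invoke the definition of gonality as the minimal degree of a dominant rational map to $\PP^1$, together with the fact that for a smooth projective curve gonality is computed by morphisms.

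For the lower bound I would argue as follows. Suppose $f:C\to\PP^1$ is a dominant morphism of degree $\delta<\min\{M,N\}=M\le N$; I want a contradiction. Up to an automorphism of the target $\PP^1$ we may assume $f$ is induced by a linear system $|D|$ of degree $\delta$ and dimension $1$ on $C$. By the adjunction formula on $\PP^1\times\PP^1$, the canonical class of $C$ is $K_C=(C+K_{\PP^1\times\PP^1})|_C$; since $K_{\PP^1\times\PP^1}$ has bidegree $(-2,-2)$, restricting to $C$ (which has bidegree $(M,N)$) gives $\deg K_C=C\cdot(C+K_{\PP^1\times\PP^1})=2MN-2M-2N$, consistent with genus $g(C)=(M-1)(N-1)$ by the genus–degree/adjunction computation. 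The key input is a classical theorem on $\PP^1\times\PP^1$ (analogous to the fact used for plane curves that the gonality of a smooth plane curve of degree $d$ is $d-1$, realized uniquely by projection from a point on the curve): every $g^1_\delta$ with $\delta$ below the projection degrees must be cut out by one of the two rulings, so its degree is $M$ or $N$, hence $\ge M$. This contradicts $\delta<M$.

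The step I expect to be the main obstacle is making the lower-bound argument fully rigorous, i.e. ruling out ``exotic'' pencils of small degree on $C$ that do not come from the two projections. One clean way to handle this is via a base-point-free pencil trick / residuation argument: a $g^1_\delta$ on $C$ with $\delta<M\le N$ would, together with the very ample $\mathcal{O}(1,0)|_C$ and $\mathcal{O}(0,1)|_C$, force an unexpected decomposition of line bundles on $C$, contradicting the fact that $\mathrm{Pic}(C)$ near these classes is controlled (for instance, because $C$ of this bidegree with $M,N\ge 2$ is not hyperelliptic nor trigonal below the ruling degrees, which can be checked against the genus $(M-1)(N-1)$ using Brill–Noether-type bounds, or directly via the geometry of $C\subset\PP^1\times\PP^1$). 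I would cite the $\PP^1\times\PP^1$ analogue of Noether's theorem — e.g. the Castryck–Cools lattice-width bound applied to the Newton polygon of $C$, which is (up to equivalence) the rectangle $[0,M]\times[0,N]$ with lattice width $\min\{M,N\}$, giving $\gamma(C)\le\min\{M,N\}$, combined with a matching lower bound from the literature on curves on quadric surfaces (Martens, or Serrano's work on curves on ruled surfaces). This reduces the problem to a citation plus the elementary projection argument above.
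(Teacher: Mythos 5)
Your upper bound is complete and correct: projecting $C$ onto one of the two rulings of $\PP^1\times\PP^1$ gives a morphism to $\PP^1$ whose degree is the intersection number of $C$ with a fibre, namely $M$ or $N$, so $\gamma(C)\le\min\{M,N\}$. The problem is the lower bound, which is the entire content of the lemma, and there your proposal does not actually contain a proof. The ``key input'' you invoke --- that every $g^1_\delta$ with $\delta$ below the ruling degrees is cut out by a ruling --- is, as you state it, exactly equivalent to the nonexistence of a pencil of degree less than $\min\{M,N\}$, i.e.\ to the statement being proved; and the two concrete mechanisms you sketch to justify it cannot work, because both Brill--Noether and the Castryck--Cools lattice-width theorem bound gonality from \emph{above} and so can never rule out a small pencil. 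The base-point-free pencil trick / residuation idea is the right neighborhood (this is how Martens-type theorems on curves on quadrics are actually proved), but you do not carry it out, and your final fallback is to cite a theorem from the literature that is verbatim the lemma. That is a legitimate way to dispose of the lemma in practice, but as a proof it is a genuine gap: the one step that needs an argument is outsourced.

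For comparison, the paper takes a different and self-contained route to the lower bound. It uses the explicit birational map $\phi:\PP^1\times\PP^1\dashrightarrow\PP^2$, $([x:w],[y:z])\mapsto[wy:zx:xy]$, checks that $\phi(C)$ is a plane curve of degree $M+N$ whose only singularities are two points of multiplicities $M$ and $N$ (the images of the loci $x=0$ and $y=0$ on $C$), and then applies the theorem of Ohkouchi--Sakai: a plane curve of degree $d$ with at most two singular or infinitely near singular points has gonality exactly $d-\nu$, where $\nu$ is the maximal multiplicity. Since gonality is a birational invariant, $\gamma(C)=(M+N)-\max\{M,N\}=\min\{M,N\}$. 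The citation there is to a statement about plane curves that is genuinely different from the lemma, and the reduction to it (the birational model and its singularity analysis) is the real work. If you want to keep your intrinsic $\PP^1\times\PP^1$ approach, you would need to either carry out the pencil-trick argument in full or cite a precise reference (e.g.\ Martens' computation of the gonality of smooth curves on Hirzebruch surfaces) and acknowledge that the lemma is then a special case of that result rather than something you have proved.
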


(If $M=0$ or $N=0$ then $C$ is isomorphic to $\PP^1$ and therefore has gonality one.)

\begin{proof}

Consider an irreducible curve $C\subset \PP^2$ of degree $d$.  Let $\nu$ be the maximum multiplicity of a singular point of $C$.  The projection map from the point with multiplicity $\nu$ to a line has degree $d-\nu$, so we conclude that $\gamma(C)\leq d-\nu$.   
Ohkouchi and  Sakai \cite{MR2060080} (Theorem 3) show that if $C$ has  at most two singular points (or infinitely near singular points), then $\gamma(C)=d-\nu$. 
(See \cite{MR1284715} page 194 for a discussion of infinitely near points.)

We consider the birational mapping $\phi:\PP^1 \times \PP^1 \rightarrow \PP^2$ given by  
\[ ([x:w],[y:z])\mapsto [wy:zx:xy].\]  This has birational inverse  given by 
\[ \phi^{-1}([a:b:c])=([c:a],[c:b]).\]  
Let $D$ be a smooth curve of bidegree $(M,N)$ in $\PP^1 \times \PP^1$
with $M\geq N$.  It suffices to show that $\nu = M$,
$deg(\phi(D))=M+N$, and $\phi(D)$ has exactly two singular or
infinitely near singular points; for then Ohkouchi and Sakai's result
implies that $\gamma(\phi(D))= d-\nu =N$. Since $\phi$ is birational,
$\gamma(D)=\gamma(\phi(D))$ proving the result.

 If $D$ is a smooth curve of bidegree $(M,N)$ then $D$ is the vanishing set of a polynomial $f(x,y)$ after projectivizing in $\PP^1 \times \PP^1$.  If $f(x,y) = \sum c_{n,m}x^ny^m$ the projectivization of $f$ is 
\[ F(x,w,y,z) = \sum c_{n,m} x^nw^{N-n} y^m z^{M-m}.\]
When $x=0$ (therefore $w=1$) this reduces to $\sum c_{0,M}y^Mz^{M-m}$. We conclude that there are $M$ points on $D$ such that $x=0$. Similarly there are $N$ points corresponding to  $y=0$ (and $z=1$).  When the polynomial has a constant term, then these points do not coincide.  If $f$ has no constant term, then $([0:1],[0:1]) \in D$.  We will assume, by taking an isomorphic copy of $D$, if necessary, that $([0:1],[0:1])\not \in D$,  so $f$ has a constant term.

Under the birational mapping,  $\phi([0:1],[y:z])=[y:0:0]=[1:0:0]$ (if $y\neq 0$).  Likewise, $\phi([x:w],[0:1])=[0:x:0]=[0:1:0]$ (if $x\neq 0$).  It follows that the $N$ points above get identified, as do the $M$ points. The image  $C=\phi(D)$  in $\PP^2 $ has a singularity of order $M$ and one of order $N$.  It is straightforward to see that if $xy\neq 0$ then a neighborhood of the point $([x:w],[y:z])$ maps smoothly into $\PP^2$. 
Therefore $\phi(D)$ has exactly two singular points, one of degree $M$ and one of degree $N$.

The degree of $C$ is given by the intersection number $CH$ where $H'$ is a hyperplane section of $\PP^2$.  Similarly, the degree of $D$ is the intersection number $DH$ where $H$ is a hyperplane section of $\PP^1 \times \PP^1$.  The divisor class group of $\PP^1 \times \PP^1$ is  $Cl (\PP^1 \times \PP^1)=\Z \oplus \Z$. Let $E=\PP^1\times\{x\}$ and $F=\{x\} \times \PP^1$.   Then  $D$ is linearly equivalent to $ME+NF$, that is $D\sim ME+NF$. (Two divisors are linear equivalent if their difference is a principal divisor.) The curves of bidegree $(M,N)$ correspond to the effective divisors of the class $ME+NF$.    Similarly, $H\sim E+F$. 

Therefore, since the degree of a divisor depends only on the linear equivalence class, $deg(D)=deg(ME+NF)$.  
Since the degree of $D$ is the intersection number $DH$, and $D\sim ME+NF$ and $H\sim E+F$ it follows that the degree of $D$ is the intersection number of $ME+NF$ and $E+F$, which is  $M+N$.

\end{proof}

We now use this to determine the gonality of the curves $Y_0(k,l)$.

\begin{thm}\label{thm:doubletwisty}
Let $J(k,l)$ be a hyperbolic knot  and let $m=\lfloor \tfrac12|k| \rfloor $ and $n=\lfloor \tfrac12 |l| \rfloor $.
\begin{itemize}
\item[] If $k\neq l$ then   $\gamma(Y_0(k,l))=\min\{ m,n\}$.  
\item[]If $k=l$ then    $\gamma(Y_0(k,l))=1$.
\end{itemize}
\end{thm}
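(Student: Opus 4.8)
The plan is to combine Theorem~\ref{theorem:mpvl} (which identifies the canonical $\PSL_2$-component $Y_0(k,l)$ with the explicitly described curve in $\PP^1\times\PP^1$) with Lemma~\ref{lemma:bidegreegonality}. From the discussion preceding Theorem~\ref{theorem:mpvl}, for $k\neq l$ the curve $D(k,l)$ determined by the equations $(*)$ is smooth and irreducible in $\PP^1(r)\times\PP^1(t)$ of bidegree $(m,n)$, where $m=\lfloor|k|/2\rfloor$ and $n=\lfloor|l|/2\rfloor$; since $J(k,l)$ is hyperbolic we have $|k|,|l|\geq 2$ (and not both equal to $\pm 2$), so $m,n\geq 1$ and $mn\neq 0$. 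Because $Y_0(k,l)$ is birationally equivalent to $D(k,l)$ and gonality is a birational invariant, Lemma~\ref{lemma:bidegreegonality} gives $\gamma(Y_0(k,l))=\gamma(D(k,l))=\min\{m,n\}$ directly.

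For the case $k=l$, the canonical component is the curve $r=t$ inside $\PP^1\times\PP^1$ (as noted before Theorem~\ref{theorem:mpvl}), which is the diagonal and hence isomorphic to $\PP^1$. A curve birational to $\PP^1$ has a degree-one rational map to $\PP^1$, namely the isomorphism itself, so its gonality is $1$; and since gonality is always at least $1$, we get $\gamma(Y_0(k,l))=1$. (Alternatively this is the bidegree $(1,1)$ case, which one could feed into Lemma~\ref{lemma:bidegreegonality} if one checks smoothness, but the diagonal description makes it immediate.)

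The only genuine points requiring care are bookkeeping ones: first, confirming that the smoothness and bidegree claims quoted from \cite{MR2827003} are exactly the hypotheses of Lemma~\ref{lemma:bidegreegonality} — in particular that "bidegree $(m,n)$" in the sense used there (effective divisors in the class $mE+nF$) matches the bidegree convention of Lemma~\ref{lemma:bidegreegonality}; and second, verifying that hyperbolicity of $J(k,l)$ forces $m,n\geq 1$ so that the degenerate case $MN=0$ of the lemma does not intrude. Neither of these is a real obstacle — they are immediate from the constraints $|k|,|l|\geq 2$ recalled in \S\ref{section:doubletwist} — so the proof is essentially a two-line invocation of the preceding results. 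The statement then follows.

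\begin{proof}
Suppose first $k\neq l$. By the discussion preceding Theorem~\ref{theorem:mpvl}, the canonical component $Y_0(k,l)$ is birationally equivalent to the curve $D(k,l)\subset\PP^1(r)\times\PP^1(t)$ cut out by the equations $(*)$, and $D(k,l)$ is smooth and irreducible of bidegree $(m,n)$ with $m=\lfloor|k|/2\rfloor$, $n=\lfloor|l|/2\rfloor$. Since $J(k,l)$ is hyperbolic we have $|k|,|l|\geq 2$, hence $m,n\geq 1$ and $mn\neq 0$. By Lemma~\ref{lemma:bidegreegonality}, $\gamma(D(k,l))=\min\{m,n\}$. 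Gonality is a birational invariant, so $\gamma(Y_0(k,l))=\gamma(D(k,l))=\min\{m,n\}$.

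Now suppose $k=l$. As noted before Theorem~\ref{theorem:mpvl}, in this case the canonical component is the curve defined by $r=t$ in $\PP^1\times\PP^1$; this diagonal is isomorphic to $\PP^1$, so $Y_0(k,l)$ is birational to $\PP^1$. A curve birational to $\PP^1$ admits a birational (in particular degree one) map to $\PP^1$, so its gonality is at most one; as gonality is a positive integer, $\gamma(Y_0(k,l))=1$.
\end{proof}
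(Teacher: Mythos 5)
Your proof is correct and follows essentially the same route as the paper: both cases reduce to quoting the smoothness, irreducibility, and bidegree data from \cite{MR2827003} and applying Lemma~\ref{lemma:bidegreegonality} (for $k\neq l$), and to observing that the diagonal $r=t$ is rational (for $k=l$). Your explicit check that hyperbolicity forces $m,n\geq 1$ is just the contrapositive of the paper's remark that $mn=0$ would make $J(k,l)$ non-hyperbolic.
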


\begin{proof}
The equations $(*)$ defining $Y_0(k,l)$ are irreducible and determine smooth curves in $\PP^1 \times \PP^1$ by 
 \cite{MR2827003}.  These equations have bidegree $(m,n)$ if $k\neq l$.  By Lemma~\ref{lemma:bidegreegonality}  if $nm\neq 0$ the gonality of $Y_0(k,l)$ is $\min\{m,n\}$.  
If $m,n=0$ then $k=0,\pm1$ or $l=0,\pm 1$ and $J(k,l)$ is not hyperbolic. It suffices to consider the case when $k=l=2n'$.   In this case, the canonical component for the $\PSL_2(\C)$ character variety is isomorphic to $\PP^1 \times \PP^1$, given by $r=t$. We conclude that the gonality is one.

\end{proof}

\begin{cor}
For all positive integers $n$ there are infinitely many  double twist knots  $K$ such that \[ \gamma(Y_0(S^3-K))=n.\]
\end{cor}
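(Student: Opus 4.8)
The plan is to exhibit, for each fixed positive integer $n$, an explicit infinite family of hyperbolic double twist knots to which Theorem~\ref{thm:doubletwisty} assigns gonality exactly $n$. Fix $n$, set $k=2n$, and consider the knots $J(2n,l)$ with $l\le -(2n+1)$. Each such $J(2n,l)$ is a knot, since $kl=2nl$ is even; and it is hyperbolic, since $|k|=2n\ge 2$, $|l|\ge 2n+1>2$, and $k\ne l$, so none of the non-hyperbolic cases ($|k|<2$, $|l|<2$, or $k=l=\pm2$) recorded in \S\ref{section:doubletwist} occurs.

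Now apply Theorem~\ref{thm:doubletwisty}. Writing $m=\lfloor |k|/2\rfloor=n$ and $s=\lfloor |l|/2\rfloor$, and using $k=2n\ne l$, we obtain $\gamma(Y_0(S^3-J(2n,l)))=\min\{m,s\}=\min\{n,s\}$. Since $|l|\ge 2n+1$ forces $s=\lfloor |l|/2\rfloor\ge \lfloor (2n+1)/2\rfloor=n$, this minimum equals $n$. Hence $\gamma(Y_0(S^3-J(2n,l)))=n$ for every integer $l\le -(2n+1)$.

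It remains to check that this family contains infinitely many distinct knots, rather than finitely many repeated under the relation $J(k,l)=J(l,k)$. Homeomorphic knot complements have isomorphic $\SL_2(\C)$ character varieties, hence birationally equivalent --- indeed equal --- canonical components $X_0$, so in particular the same geometric genus. By Theorem~\ref{theorem:mpvl}, $g(X_0(S^3-J(2n,l)))=3ms-m-s-b=(3n-1)s-n-b$ with $b\in\{-1,0,1\}$; since $3n-1\ge 2$, this tends to infinity as $l\to -\infty$. Therefore the complements $S^3-J(2n,l)$ realize infinitely many homeomorphism types, and so the knots $J(2n,l)$, $l\le -(2n+1)$, are infinitely many distinct knots. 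Certifying that the family is genuinely infinite is the only step requiring any thought; an alternative is to observe that for fixed $k=2n$ these knots are the $-1/l$ surgeries on one cusp of a fixed two-cusped manifold, hence by Thurston's hyperbolic Dehn surgery theorem have strictly increasing volumes once $|l|$ is large, which again forces infinitely many distinct complements.
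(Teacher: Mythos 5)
Your proof is correct and follows exactly the route the paper intends: the corollary is stated as an immediate consequence of Theorem~\ref{thm:doubletwisty}, obtained by fixing $\lfloor|k|/2\rfloor=n$ and letting $l$ range over infinitely many values with $\lfloor|l|/2\rfloor\geq n$. Your explicit verification that the knots $J(2n,l)$ are pairwise distinct (via the unbounded genus of $X_0$ from Theorem~\ref{theorem:mpvl}, or via volumes) is a detail the paper leaves implicit, and it is handled correctly.
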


Now we compute the gonality of the canonical components of the  $\SL_2(\C)$ character varieties.  
By Lemma~\ref{lemma:abstractgonalitybounds} since $X_0(k,l)$ is a degree 2 branched cover of $Y_0(k,l)$ we conclude that 
\[ \min\{ m,n\} \leq \gamma(X_0(k,l)) \leq 2  \min\{ m,n\}.\] 
We now explicitly show that the gonality equals this upper bound.

\begin{thm}\label{thm:doubletwisty2}
Let $J(k,l)$ be a  hyperbolic knot and let $m=\lfloor \tfrac12|k| \rfloor $ and $n=\lfloor \tfrac12 |l| \rfloor $.
\begin{itemize}
\item[] If $k\neq l$ then $\gamma(X_0(k,l))=2 \min\{ m,n\}$.  
\item[]If $k=l$ then  $\gamma(X_0(k,l))=2$. 
\end{itemize}
\end{thm}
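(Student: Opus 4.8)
The upper bound $\gamma(X_0(k,l))\le 2\min\{m,n\}$ in the case $k\ne l$ is already recorded just above the statement, and for $k=l$ the same argument with $\gamma(Y_0(k,k))=1$ (Theorem~\ref{thm:doubletwisty}) gives $\gamma(X_0(k,l))\le 2$; so the work is entirely in the matching lower bound. The plan is to exploit the natural dominant degree $2$ map $f\colon X_0(k,l)\to Y_0(k,l)$ together with the Castelnuovo--Severi inequality. Working on smooth projective models, fix a dominant rational map $\phi\colon X_0(k,l)\to\PP^1$ of degree $\gamma:=\gamma(X_0(k,l))$, and split on whether or not $\phi$ factors through $f$. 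If $\phi=\psi\circ f$ for some dominant $\psi\colon Y_0(k,l)\to\PP^1$, then $\gamma=2\deg\psi\ge 2\gamma(Y_0(k,l))$, which with the upper bound settles that case. If $\phi$ does not factor through $f$, then the subfields $\phi^*\C(\PP^1)$ and $f^*\C(Y_0(k,l))$ of $\C(X_0(k,l))$ have compositum equal to all of $\C(X_0(k,l))$ — otherwise that compositum would be a proper subfield sitting between $\C(X_0(k,l))$ and the index-$2$ subfield $f^*\C(Y_0(k,l))$, hence would equal the latter, forcing $\phi$ to factor — so Castelnuovo--Severi applies.

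From Castelnuovo--Severi applied to the pencils of degrees $\gamma$ and $2$ I get $g(X_0(k,l))\le 2g(Y_0(k,l))+(\gamma-1)$, that is
\[ \gamma\ge g(X_0(k,l))-2g(Y_0(k,l))+1.\]
I would then substitute the genus formulas of Theorem~\ref{theorem:mpvl}: for $k\ne l$ the right-hand side becomes $(3mn-m-n-b)-2(m-1)(n-1)+1=mn+m+n-1-b\ge mn+m+n-2$, since $b\le 1$. An elementary check (write $mn+m+n-2-2\min\{m,n\}=n(m-1)+(m-2)$ assuming $m\ge n$) shows this quantity is strictly larger than $2\min\{m,n\}$ whenever $\max\{m,n\}\ge 2$. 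Hence in that range the non-factoring alternative would contradict the upper bound and is impossible, so $\phi$ factors through $f$ and $\gamma=2\min\{m,n\}$.

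It then remains to handle the boundary cases. For $k\ne l$ with $m=n=1$, one has $g(Y_0(k,l))=0$ and $b\le 0$: indeed $b=1$ would require $k+l$ even and $kl>0$, which for $|k|,|l|\in\{2,3\}$ forces $k=l$; so in the non-factoring case Castelnuovo--Severi gives $\gamma\ge g(X_0(k,l))+1=2-b\ge 2=2\min\{m,n\}$, while the factoring case again yields $\gamma\ge 2$, so $\gamma=2$. For $k=l$, $J(k,k)$ is a hyperbolic knot only when $k$ is even with $|k|\ge 4$, so $m=n=|k|/2\ge 2$; since $Y_0(k,k)\cong\PP^1$ we have $\gamma(X_0(k,k))\le 2$, and $g(X_0(k,k))=n-1\ge 1$ forces $X_0(k,k)$ non-rational, hence $\gamma(X_0(k,k))\ge 2$, giving $\gamma=2$. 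I expect the main points needing care to be the verification of the compositum hypothesis of Castelnuovo--Severi (and its clean reduction to the "factors through $f$" dichotomy) and the bookkeeping in these small cases together with the exact value of $b$; the numerical inequality itself is routine.
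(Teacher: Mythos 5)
Your proof is correct and follows essentially the same route as the paper: the upper bound comes from the degree--$2$ cover of $Y_0(k,l)$, and the lower bound from the Castelnuovo--Severi inequality combined with $\gamma(Y_0(k,l))=\min\{m,n\}$ (Theorem~\ref{thm:doubletwisty}) and the genus formulas of Theorem~\ref{theorem:mpvl}. The only differences are organizational: you verify the compositum hypothesis of Castelnuovo--Severi explicitly via the index--$2$ subfield dichotomy (the paper invokes the ``simple morphism'' form of the inequality directly), and you dispatch the boundary cases $m=n=1$ and $k=l$ using the same genus formulas and the sign analysis of $b$, where the paper instead appeals to direct inspection of the defining equations and to its explicit hyperelliptic model for $X_0(k,k)$.
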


\begin{proof} 

We first  consider the case when $k=l=2n'$. By  \cite{MR2827003}, a  `natural model' for  $X_0(2n',2n')\subset \C^2(x,r)$ is given by 
\[ x^2 = \frac{(r+2)f_{n'}(r)^2-1}{f_{n'}(r)^2}\]
and is birational to the hyper elliptic curve
\[ w^2=(r+2)f_{n'}(r)^2-1.\]
This has genus $n'-1$ by \cite{OPTBTNO}.
If $J(k,k)$ is hyperbolic then  $|n'|>2$ and the gonality of $X_0(k,k)$ is two.

Now,  assume that $k\neq l$ and $J(k,l)$ is hyperbolic.  If $m$ or $n$ is one, then one can verify directly from the defining equations that $X_0(k,l)$ is an elliptic or  hyperelliptic curve and therefore has gonality two.  We will now assume that $m$ and $n$ are at least two.  

Consider the following situation.   Let $X$ and $Y$ be smooth curves of genus $g_X$ and $g_Y$ and let $f:X\rightarrow Y$ be a degree $k$ simple morphism 
and $h:X\rightarrow \PP^1$ a degree $d$ morphism. (A simple morphism is one that does not factor through a nontrivial morphism.)  The Castelnuovo-Severi inequality (see \cite{MR2846309} Theorem 2.1) states that if 
\begin{equation*}\tag{$*$} d \leq \frac{g_X-k g_Y}{k-1} \end{equation*}
then $h$ factors through $f$.

Let $X$ be a smooth projective model for $X_0(k,l)$ and $Y$ a smooth projective model for $Y_0(k,l)$. Since there is a degree two birational map from $X_0(k,l)$ to $Y_0(k,l)$, by Stein factorization, there is a degree two morphism from $X$ to $Y$. 


From \cite{MR2827003},  we have a degree $k=2$ mapping, with $g_X=3mn-m-n-b $  (for an explicitly determined $b\in \{0,\pm1\}$) and $g_Y=(m-1)(n-1)$.  Therefore, the righthand side of the inequality is 
\[ g_X-2g_Y=  (3mn-m-n-b) -2(m-1)(n-1) =mn+m+n-b-2.     \]
Since $b\in \{0,\pm 1\}$ the right-hand side of $(*)$ is at most $mn+m+n-1$. 
We conclude that any dense map from $X_0(k,l)$ to $\PP^1$ of degree at most $mn+m+n-1$  factors through a map from $Y_0(k,l)$ to $\PP^1$.     Let $d$ be the minimal degree of a map from $X_0(k,l)$ to $\PP^1$.  Since the gonality of $Y_0(k,l)$ is $\min\{m,n\}$ we conclude that 
\[ \min \{m,n\}\leq d \leq 2 \min \{ m,n\}.\]  
Since $m$ and $n$ are at least two,  
\[ 
d \leq 2 \min\{m,n\} \leq mn+m+n-1.
\]  
By the Castelnuovo-Severi inequality,  the map factors through $Y_0(k,l)$.  As the gonality of $Y_0(k,l)$ is $\min\{m,n\}$ by Lemma~\ref{thm:doubletwisty} we conclude that the smallest degree of a map from $X_0(k,l) \rightarrow \PP^1$ is $2\min\{m,n\}$.

\end{proof}

\begin{remark} The Brill-Noether theorem gives the bound $\gamma \leq \lfloor \frac{g+3}{2} \rfloor$ where $g$ is the genus. For $Y_0(k,l)$ this inequality is
\[ 
\min\{m,n\} = \gamma \leq \lfloor   \tfrac12(mn-m-n+2)   \rfloor
\]
so the gonality is smaller than the  Brill-Noether bound.
 For $k=l$ the genus of $Y_0(k,l)$ is zero and the gonality is one, meeting the Brill-Noether bound.  The examples with $k\neq l$ show that for any $N$ there is a double twist knot (a twist knot, in fact) such that $\tfrac12(g+3)-\gamma >N$.   That is, the discrepancy between the gonality and the Brill-Noether bound is unbounded  in this family. 
 \end{remark}

\begin{remark}
Hoste and Shanahan have computed a recursive formula for the $A$-polynomials of the twist knots \cite{MR2047468}, the knots $J(\pm2, n)$ and shown that these polynomials are irreducible. 
Their results show that the degree of the $A$-polynomials 
grow linearly in $n$.  This shows that the linear bounds in Theorem~\ref{theorem:degree} 
are sharp, since the twist knots are $-1/n$ surgery on one component of the Whitehead link. 
\end{remark}

\section{Gonality, Injectivity Radius, and Eigenvalues of the Laplacian}


Since the smooth projective model of a (possibly singular) affine curve is
a compact Riemann surface, it can also be viewed as
a quotient of the Poincare disk by a discrete group
of isometries of the Poincare disk, and as such it is natural to try to relate gonality to geometric and analytical properties associated to the hyperbolic
metric; for example 
the injectivity radius, $\varrho$ (i.e. half of the length of 
a shortest closed geodesic),  
or the first non-zero eigenvalue of the Laplacian,  $\lambda_1$.

To that end, a famous result of Li and Yau \cite{MR674407} shows that
for a compact Riemann surface of genus $g$, a bound for the gonality 
$\gamma$ is given by:

\[ \lambda_1 (g-1) \leq \gamma\]
where $\gamma$ is the gonality defined using regular maps. Since the (rational) gonality of a complex affine algebraic curve equals the (regular) gonality of a smooth projective completion, this bound applies to smooth projective models of  character varieties.  A Riemann surface of genus zero is rational 
and so has gonality one, and a Riemann surface of genus one is an elliptic curve and so has gonality two.  For any higher genus, the above bound implies that 
\[ \lambda_1 \leq \frac{\gamma}{g-1}.\]
 By Theorem~\ref{theorem:1}  the (rational) gonality of $X_0(M(-,r))$ is bounded independent of $r$, and therefore so is the (regular) gonality, $\gamma$, of a smooth projective model. We conclude that $\lambda_1$ is bounded above independent of $r$ for these curves.  
Moreover, if $\{r\}$ is sequence of rational numbers  such that  the genus of the character varieties  grows, then since $\gamma$ is bounded 
\[ \lambda_1  \leq \frac{\gamma}{g-1} \rightarrow 0.\]

For hyperbolic double twist knots the genera of $Y_0(k,l)$ and  $X_0(k,l)$ was determined in \cite{MR2827003}. (See \S~\ref{section:doubletwist}.)   The genus of $X_0(k,l)$ is less than two only for the complements of the knots  $J(2,-2)$ (the figure-eight) and $J(4,4)$; both  have genus 1.  The only hyperbolic double twist knots such that the genus of $Y_0(k,l)$ is less than two are  $J(k,k)$, $J(2,-2)$, $J(2,-3)$ which have genus zero, and $J(4, \pm 5)$ which have genus one. 
When $k\neq l$, we obtain the following inequalities using Theorem~\ref{theorem:mpvl}
 \[ \lambda_1(Y_0(k,l))  \leq \frac{ \min\{m,n\}}{(m-1)(n-1)} , \qquad \lambda_1(X_0(k,l)) \leq \frac{2\min\{m,n\}}{3mn-m-n-b}  \]
 where $X_0(k,l)$ and $Y_0(k,l)$ denote smooth projective models and  $n= \lfloor \tfrac12|k| \rfloor  $ and $m = \lfloor \tfrac12|l| \rfloor.  $
Both quantities $\rightarrow 0$  as $m$ or $n$  $\rightarrow \infty.$ 
For the once-punctured torus bundles of tunnel number one, (see \S~\ref{section:torusbundles}) let $X_0(M_n)$ denote a smooth projective model.
Then for  $|n|>4$ 
 \[ \lambda_1(X_0(M_n))  \leq \frac{2}{\lfloor  \tfrac12(|n-2|-2) \rfloor} \leq \frac4{|n-1|-3} \]
and we conclude that  $\lambda_1 \rightarrow 0$ as $|n| \rightarrow \infty$.

More recently Hwang and To \cite{MR2876146}  show that the injectivity radius $\varrho$ of a compact Riemann surface of genus $g\geq 2$ is bounded above by a function of gonality (defined for regular maps),
\[ \varrho \leq 2 \cosh^{-1} (\gamma). \]
With $X_0(M(-,r))$ denoting a smooth projective model, 
Theorem~\ref{theorem:1} implies that there is a constant $c$ such that the gonality of these varieties is bounded above by $c$ and therefore there is a constant $d$ such that 
\[ \varrho (X_0(M(-,r))) \leq 2 \cosh^{-1} (\gamma) =d. \]
It follows that  the injectivity radius is bounded above for all $X_0(M(-,r))$, such that  $g\neq 0,1$.

By Theorem~\ref{thm:doubletwisty2} and Theorem~\ref{thm:doubletwisty} for all  hyperbolic double twist knots other than the figure-eight, if $k\neq l$
\[ \varrho (X_0(k,l)) \leq 2 \cosh^{-1} (2 \min \{m,n\}), \quad \text{and } \quad \varrho (Y_0(k,l)) \leq 2 \cosh^{-1} (\min \{m,n\}). \]
If $k=l$ then $\varrho (X_0(k,l)) \leq 2 \cosh^{-1}(2)$. 
For all  once-punctured torus bundles of tunnel number one with $|n|>6$,  $X_0(M_n)$ is a hyper elliptic curve. We conclude that for the corresponding smooth projective models  the injectivity radius is bounded by $2 \cosh^{-1} (2).$ 

Motivated by the discussion here, we close with two questions.  To state
these define an infinite family of compact Riemann surfaces $\{\Sigma_j\}$
to be {\em an expander family} if the genera of $\Sigma_j\rightarrow \infty$
and there is a constant $C>0$ for which $\lambda_1(\Sigma_j)>C$.  One such
family are congruence arithmetic surfaces.\\[\baselineskip]
\noindent{\bf Question:}~{\em (1)~Does there exist an infinite family 
of 1-cusped hyperbolic 3-manifolds $M_j$ for which $X_0(M_j)$ (resp. $Y_0(M_j)$)
is an expander family?}

\medskip

\noindent{\em (2) Does there exist an infinite family 
of 1-cusped hyperbolic 3-manifolds $M_j$ for which $X_0(M_j)$ (resp. $Y_0(M_j)$)
has arbitrarily large injectivity radius?}

\bibliographystyle{amsplain}
\bibliography{gonalityfinal.bib}

\end{document}